

\documentclass{amsart}
\usepackage{color}

\newtheorem{theorem}{Theorem}[section]
\newtheorem{lemma}[theorem]{Lemma}
\newtheorem{proposition}[theorem]{Proposition}
\newtheorem{conjecture}[theorem]{Conjecture}
\newtheorem{corollary}[theorem]{Corollary}

\newcommand{\todo}[1]{\textcolor{blue}{#1}\marginpar{\textcolor{blue}{$\longleftarrow$~To~Do}}}

\setcounter{tocdepth}{1}

\theoremstyle{definition}
\newtheorem{definition}[theorem]{Definition}
\newtheorem{example}[theorem]{Example}

\theoremstyle{remark}
\newtheorem{remark}[theorem]{Remark}

\numberwithin{equation}{section}



\newcommand\ignore[1]{}

\newcommand\vare{\varepsilon}


\newcommand\CC{\mathbb C}
\newcommand\HH{\mathbb H}
\newcommand\NN{\mathbb N}
\newcommand\RR{\mathbb R}
\newcommand\ZZ{\mathbb Z}
\newcommand\QQ{\mathbb Q}

\newcommand\Adele{\mathbb A}



\newcommand\vre{\varepsilon}


\newcommand\supp{\operatorname{supp}}

\newcommand\Ad{\operatorname{Ad}}

\newcommand\vol{\operatorname{vol}}

\newcommand\norm[1]{\left\|#1\right\|}

\newcommand\abs[1]{\left|#1\right|}

\newcommand\inn[1]{\left\langle #1 \right\rangle}
\newcommand\set[1]{\left\{{#1}\right\}}


\begin{document}
\title{Counting lattice points}

\author{Alexander Gorodnik}
\address{School of Mathematics \\ University of Bristol \\ Bristol BS8 1TW, U.K.}
\email{a.gorodnik@bristol.ac.uk}
\thanks{The first author was supported in part by NSF Grant and RCUK fellowship}

\author{Amos Nevo}
\address{Department of Mathematics, Technion, Haifa, Israel.}

\email{anevo@tx.technion.ac.il}
\thanks{The second author was supported by the IAS at Princeton and  ISF grant 975-05}

\subjclass{Primary 22D40; Secondary 22E30, 28D10, 43A10, 43A90}


\dedicatory{}

\keywords{}

\begin{abstract}
For a locally compact second countable group $G$ and a lattice subgroup $\Gamma$,
we give an explicit quantitative solution of the lattice point counting problem in general domains in $G$, 
provided that   

i) $G$ has finite upper local dimension, and the domains satisfy a basic regularity condition, 

ii) the mean ergodic theorem for the action of $G$ on $G/\Gamma$ holds, with a rate of convergence.

The error term we establish matches the best current result for balls in symmetric spaces of simple higher-rank Lie groups, but holds in much greater generality. 

A significant advantage of the ergodic theoretic approach we use is that the solution to the lattice point counting problem  is uniform over families of lattice subgroups provided they admit a uniform spectral gap. In particular, the uniformity 
property holds for families of finite index subgroups  satisfying a quantitative variant of property $\tau$. 

We discuss a number of applications, including: counting lattice points in general domains in semisimple $S$-algebraic groups, counting rational points on group varieties with respect to a height function, and quantitative angular (or conical) equidistribution of lattice points in symmetric spaces and in affine symmetric varieties. 

We note that the mean ergodic theorems which we establish are based on spectral methods,
including the spectral transfer principle and the Kunze-Stein phenomenon. We formulate and prove appropriate  analogues of both of these results in the set-up of adele groups, and they constitute a necessary step in our proof 
of quantitative results in counting rational points.



%

\end{abstract}

\maketitle

{\small \tableofcontents}

\section{Introduction, definitions, and statements of general counting results}
\subsection{Introduction and definitions}
Let $G$ be a locally compact second countable (non-compact) group and  $\Gamma$ a discrete
subgroup of $G$ with finite covolume. The purpose of the present paper is to give a general solution to the problem of counting lattice points in families of domains in $G$. More explicitly, our goal is to show that for a family of subsets $B_t\subset G$, $t > 0$,
\begin{equation}\label{eq:bt}
|\Gamma\cap B_t|\sim m_G(B_t)\quad\hbox{as $t\to\infty$},
\end{equation}
where $m_G$ is Haar measure on $G$ normalised by $m_{G/\Gamma}(G/\Gamma)=1$. 
Furthermore, we seek to establish an error term in the asymptotic, of the form 
\begin{equation}\label{eq:er}
\abs{\frac{\abs{ \Gamma\cap B_t}}{m_G (B_t)}-1}\le C\, m_G(B_t)^{-\delta}\,\,.
\end{equation}

Our approach is based on the    
the following fundamental principle : {\it the main term in the number of lattice points follows from 
the mean ergodic theorem in $L^2(G/\Gamma)$ for the Haar-uniform averages supported on the sets $B_t$, and the error estimate follows from the rate of convergence of these averages.}
This principle is a part of the general ergodic theory of lattice subgroups formulated in  \cite{GN} and here we systematically develop and refine the diverse counting results which it implies. 

In general, given a  family of domains $B_t\subset G$ and an ergodic  measure-preserving action of $G$
on a probability measure space $(X,\mu)$, the mean ergodic theorem (for the family $B_t$) 
is the statement that
\begin{equation}\label{eq:met}
\frac{1}{m_G(B_t)}\int_{B_t}f(g^{-1}x) dm_G(g)\stackrel{L^2}{\longrightarrow} \int_{X} f\, d\mu\quad\hbox{as $t\to\infty$}
\end{equation}
for every $f\in L^2(X)$.
We show that the mean ergodic theorem, together with a mild regularity property 
for the sets $B_t$ (namely, well-roundedness \cite{DRS}, \cite{EM}), implies that 
\eqref{eq:bt} holds. Furthermore, when convergence 
takes place with a fixed rate, the sets $B_t$ satisfy a quantitative regularity condition (namely,
H\"older well-roundedness \cite{GN}), and $G$ has finite upper local dimension, then 
the lattice point counting problem for the domains $B_t$ admits an explicit quantitative solution. The error term is
controlled directly by the spectral gap estimate satisfied by the family of averaging operators above 
acting on $L^2(G/\Gamma)$, together with the degree of regularity of $B_t$ and the upper local dimension.

 Previous counting results in the literature are improved upon in several different respects, including admitting more general sets,
 establishing or improving explicit error terms, and enlarging the class of groups involved. 
 Our approach also gives uniform estimates over families of lattice subgroups (as well as over their cosets), which have a number of interesting applications (see Section \ref{s:application}).



We begin by recalling and introducing some definitions needed in the statements of the main results.
Let  $\mathcal{O}_\vre$, $\vre > 0$, be a family  
of symmetric neighbourhoods of the identity in $G$, which is decreasing with $\vre$. 
Let $B_t\subset G$, $t\in \RR_+$,  be a family of bounded Borel subsets 
of positive finite Haar measure.  In the following definition, we recall the notion of well-rounded sets from \cite{DRS} and  \cite{EM}, and give an effective version of it (see \cite{GN}). 

\begin{definition}\label{well rounded}
{\bf Well-rounded and H\"older well-rounded sets.}
\begin{enumerate}
\item 
The family $B_t$ is {\it well-rounded} (w.r.t. $\mathcal{O}_\vre$) if for every $\delta > 0$ there exist $\vare,t_1> 0$  such that for all $t\ge t_1$,
$$ m_G(\mathcal{O}_\vre B_t \mathcal{O}_\vre)\le (1+\delta) m_G(\cap_{u,v\in \mathcal{O}_\vre} uB_t v)\,.$$
\item The family $B_t$ is {\it H\"older well-rounded} with exponent $a$ (w.r.t. $\mathcal{O}_\vre$) if 
there exist $c,\vre_1,t_1>0$ such that for all $0< \vre < \vre_1 $ and $t\ge t_1$, 
\begin{equation*}
 m_G(\mathcal{O}_\vre B_t \mathcal{O}_\vre)\le (1+c \vre^a) m_G(\cap_{u,v\in \mathcal{O}_\vre} uB_t v)\,.
\end{equation*}
\end{enumerate}
\end{definition}

Given a family $B_t$ of subsets of $G$, we set 
\begin{equation}\label{eq:btplus}
B_t^+(\vre)=\mathcal{O}_\vre B_t \mathcal{O}_\vre\,\,\,\hbox{and} \,\,\, B_t^-(\vre)=\cap_{u,v\in
  \mathcal{O}_\vre} uB_t v\,.
\end{equation}

Let us also recall the following natural condition, which is clearly stronger than  H\"older well-roundedness.
\begin{definition}\label{admissible}{\bf Admissible  $1$-parameter families} \cite{GN}.
The family  $B_t$ is {\it H\"older admissible} with exponent $a$ (w.r.t. $\mathcal{O}_\vre$)
 if there exist $c,t_1,\vre_1 > 0$ such that for all
  $0 < \vre< \vre_1$ and  $t\ge t_1$,
\begin{align*}
\mathcal{O}_\vre
\cdot B_t\cdot \mathcal{O}_\vre &\subset B_{t+c\vre^a},\\
m_G(B_{t+\vre})&\le (1+c\vre^a)\cdot  m_G(B_{t}).
\end{align*}


\end{definition}

H\"older admissibility (and in some considerations even Lipschitz admissibility) played an important role  in the arguments in \cite{GN} applied to prove pointwise ergodic theorems for general actions of a group $G$ and a lattice subgroup $\Gamma$. For an extensive list of example of admissible averages 
on $S$-algebraic groups we refer to \cite[Ch. 7]{GN}. However, as already noted in \cite{GN}, when we consider only
the mean ergodic theorem on spaces of the form $G/\Gamma$, the condition of H\"older well-roundedness
will be sufficient.  This condition allows for a very diverse set of averages, as we shall see in the
examples below. For instance, the sets arising in the study of angular distribution
of lattice points (see Sections \ref{sec:sym} and \ref{sec:affine}) are H\"older well-rounded,
but not H\"older admissible.



The family of neighbourhoods $\mathcal{O}_\vre$  gives rise to the notion of  upper local dimension:

\begin{definition}{\bf Upper Local dimension.} \label{def:local}
 We say that the {\it upper local dimension}
is at most $\rho$ if there exist $m_0,\vre_1>0$ such that
\begin{equation*}
m_G(\mathcal{O}_\vre)\ge m_0\vre^\rho\quad\hbox{for all $\vre\in (0,\vre_1)$.}
\end{equation*}
\end{definition} 

For example, when $G$ is a connected Lie group, we fix a Riemannian 
metric on $G$ and set
$\mathcal{O}_\vare=\set{g\in G:\, d(g,e)<\vare}$.
Then one can take $\rho=\dim G$.
Another important example is the case where $G=G_\infty \times G_f$ is a product of a connected Lie group of positive  dimension, and a totally disconnected group $G_f$. Set $\mathcal{O}_\vre=
\mathcal{O}_\vre^\infty \times W$, where $\mathcal{O}_\vre^\infty$ are Riemannian  balls of radius 
$\vre$ centered at the identity of $G_\infty$, and $W$ is a fixed compact open subgroup of $G_f$.
Then again $\rho$ is the dimension of $G_\infty$.

Let $\beta_t$ denote the normalised Haar-uniform measure supported on the set $B_t$. 
Consider a measure-preserving action of $G$ on a 
standard Borel probability space $(X,\mu)$ and the averaging operators $\pi_X(\beta_t)$
defined by
\begin{equation}\label{eq:b0}
\pi_X(\beta_t)f(x):=\frac{1}{m_G(B_t)}\int_{B_t}f(g^{-1}x) dm_G(g),\quad f\in L^p(X).
\end{equation}

\begin{definition}\label{quant mean}{\bf Mean ergodic theorems.}
\begin{enumerate}
\item The operators 
$\pi_X(\beta_t)$ satisfy the {\it mean ergodic theorem in $L^2(X)$} if 
$$\norm{\pi_X(\beta_t)f-\int_X f d\mu}_{L^2(X)}\to 0\quad\hbox{as $t\to\infty$}$$ 
for all $f\in L^2(X)$. 
\item The operators 
$\pi_X(\beta_t)$ satisfy
 the {\it quantitative mean ergodic theorem in $L^2(X)$} 
with rate $E(t)$ if 
$$\norm{\pi_X(\beta_t)f-\int_X fd\mu}_{L^2(X)}
\le E(t) \norm{f}_{L^2(X)}$$
for all $f\in L^2(X)$ and $t>0$,
where $E:(0,\infty)\to (0,\infty)$ is a function such that $E(t)\to 0$ as $t\to\infty$.
\end{enumerate}
\end{definition}

Note that when the action of $G$ on $X$ satisfies the quantitative mean ergodic theorem,
the unitary representation of $G$ in $L^2_0(X)$ (the space of mean zero functions)
must have a spectral gap, provided that at least one  of the sets $B_t^{-1}B_t$ generates $G$
for some $t$ such that $E(t)<1$.
Conversely, when $G$ is a connected semisimple Lie group
with finite center and the unitary representation of $G$ in $L^2_0(X)$
has a (strong) spectral gap, for general families $B_t$ one has a quantitative
mean ergodic theorem of the form
\begin{equation*}
\norm{\pi_X(\beta_t)f-\int_X fd\mu}_{L^2(X)}
\le C\, m(B_t)^{-\kappa} \norm{f}_{L^2(X)}
\end{equation*}
for some $C,\kappa>0$ (see Theorem \ref{semisimple mean} below).

In order to solve the lattice point counting problem,
we will need a stable version of the mean ergodic theorem:

\begin{definition}\label{stable}{\bf Stable  mean ergodic theorems}.
We will call the ergodic theorem for averages
along the sets $B_t$ {\it stable} if it holds for all the families 
$B_t^+(\vre)$ and $B_t^-(\vre)$ simultaneously for $\vre\in (0,\vre_1)$. 
For the quantitative mean ergodic theorem we require in addition that the function $E(t)$
is independent of $\vre$. 
\end{definition}
\begin{remark}
As we shall see, in the context of semisimple $S$-algebraic groups a well-rounded family which
satisfies the (quantitative) mean ergodic theorem also satisfies the (quantitative) stable mean ergodic theorem. Indeed, our method of establishing the norm estimate associated with a strong spectral gap  is based on the spectral transfer principle and the Kunze-Stein phenomenon. Together these imply that the rate of convergence depends only on the rate of volume growth of the family, and the $L^p$-parameter of integrability of the representation. We will establish and then apply similar considerations to radial averages on adele groups. 


\end{remark}

\subsection{Statement of general counting results}

We can now formulate the following basic result, which provides the main term in the lattice point counting problem in well-rounded domains.  Fixing a choice of Haar measure on 
$G$, let us denote the measure of a fundamental domain of $\Gamma$ in $G$ by $V(\Gamma)$.

 \begin{theorem}\label{main term}{\bf Lattice point problem : main term.}
 Let $G$ be an lcsc group, $\Gamma\subset G$ a discrete lattice subgroup,
and $B_t$ a well-rounded family of subsets of $G$. 
Assume that the averages $\beta_t$ supported on  $B_t$ satisfy the stable mean ergodic theorem in
$L^2(G/\Gamma)$.
Then
 $$\lim_{t\to \infty} \frac{\abs{\Gamma\cap B_t}}{m_G( B_t)}=\frac{1}{V(\Gamma)}\,.$$
\end{theorem}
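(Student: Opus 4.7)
The plan is to evaluate $|\Gamma\cap B_t|$ by sandwiching a quadratic form built from the averaging operator acting on a $\Gamma$-periodised bump, and then collapsing the middle term by the mean ergodic theorem. The principal obstacle is that mean ergodic convergence is only in $L^2(G/\Gamma)$, so a pointwise evaluation of a periodised counting function is illegal; the two-sided smoothing below, together with the stable version of the hypothesis, is what remedies this.

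Fix $\vre>0$ small enough that $\mathcal{O}_\vre\mathcal{O}_\vre\cap\Gamma=\{e\}$, let $\chi_\vre=\mathbf{1}_{\mathcal{O}_\vre}/m_G(\mathcal{O}_\vre)$, and let $\tilde\chi_\vre(g\Gamma)=\sum_{\gamma\in\Gamma}\chi_\vre(g\gamma)$ be its $\Gamma$-periodisation on $G/\Gamma$, a nonnegative function of total mass $1/V(\Gamma)$. For any bounded family $C_s\subset G$ with normalised Haar averages $\beta^{C}_s$, a direct unfolding yields
\begin{equation*}
\bigl(\pi_{G/\Gamma}(\beta^{C}_s)\tilde\chi_\vre\bigr)(g\Gamma)
=\frac{1}{m_G(C_s)\,m_G(\mathcal{O}_\vre)}\sum_{\gamma\in\Gamma}m_G\bigl(C_s\cap g\gamma\mathcal{O}_\vre\bigr).
\end{equation*}
For $g\in\mathcal{O}_\vre$ the $\gamma$-summand equals $m_G(\mathcal{O}_\vre)$ whenever $\gamma\in C_s^-(\vre)$ (since then $g\gamma\mathcal{O}_\vre\subset\mathcal{O}_\vre\gamma\mathcal{O}_\vre\subset C_s$) and vanishes unless $\gamma\in C_s^+(\vre)$. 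Pairing with $\tilde\chi_\vre$, unfolding the outer integral over $G/\Gamma$ and using $\int_G\chi_\vre\,dm_G=1$ gives
\begin{equation*}
\frac{|\Gamma\cap C_s^-(\vre)|}{V(\Gamma)\,m_G(C_s)}\ \leq\ \bigl\langle\pi_{G/\Gamma}(\beta^{C}_s)\tilde\chi_\vre,\tilde\chi_\vre\bigr\rangle_{L^2(G/\Gamma)}\ \leq\ \frac{|\Gamma\cap C_s^+(\vre)|}{V(\Gamma)\,m_G(C_s)}.
\end{equation*}
The mean ergodic theorem applied to $\beta^{C}_s$ sends the middle quantity to $V(\Gamma)^{-2}$, since $\pi_{G/\Gamma}(\beta^{C}_s)\tilde\chi_\vre\to 1/V(\Gamma)$ in $L^2(G/\Gamma)$.

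Finally I apply this with two choices of $C_s$, invoking the stable mean ergodic theorem for each. Taking $C_s=B_s^-(\vre)$ and an auxiliary bump parameter $\vre'\le\vre$, the containment $(B_s^-(\vre))^+(\vre')\subset B_s$ (thickening by $\vre'\le\vre$ undoes the prior shrinking) converts the right-hand bound into $1/V(\Gamma)\le\liminf_{s\to\infty}|\Gamma\cap B_s|/m_G(B_s^-(\vre))$. Symmetrically, $C_s=B_s^+(\vre)$ together with $B_s\subset(B_s^+(\vre))^-(\vre')$ yields $\limsup_{s\to\infty}|\Gamma\cap B_s|/m_G(B_s^+(\vre))\le 1/V(\Gamma)$. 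Well-roundedness furnishes $m_G(B_s^+(\vre))\le(1+\delta)m_G(B_s^-(\vre))$ for $\vre$ small and $s$ large, so $m_G(B_s^\pm(\vre))/m_G(B_s)\to 1$; sending $\vre\to 0$ and $\delta\to 0$ squeezes $\lim_s|\Gamma\cap B_s|/m_G(B_s)$ to $1/V(\Gamma)$.
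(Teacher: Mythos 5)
Your proof is correct, and it takes a genuinely different route from the paper's. The paper applies the mean ergodic theorem to $\phi_\vre$, converts the $L^2$ estimate into a Chebyshev (measure-of-bad-set) bound, and uses this to extract a single \emph{good point} $h_t\in\mathcal{O}_\vre$ at which the ergodic average is within $\delta$ of $1/V(\Gamma)$; it then invokes the paper's Lemma~\ref{comparison} pointwise at $h_t$. You instead form the quadratic form $\langle\pi_{G/\Gamma}(\beta^C_s)\tilde\chi_\vre,\tilde\chi_\vre\rangle$, which is precisely the \emph{average} of the same pointwise comparison over $h\in\mathcal{O}_\vre$ against the bump $\chi_\vre$, and then you collapse the middle term by Cauchy--Schwarz and $L^2$ convergence, bypassing the Chebyshev step and any need to locate a good point. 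Both arguments ultimately rest on the same unfolding/sandwiching computation (your pointwise inequalities for $\pi_{G/\Gamma}(\beta^C_s)\tilde\chi_\vre$ on $\mathcal{O}_\vre$ are a rephrasing of Lemma~\ref{comparison}), but the pairing-with-a-bump formulation is a clean duality variant. The paper's pointwise-existence version has one practical advantage: in the quantitative Theorem~\ref{error estimate} the Chebyshev inequality is what produces the explicit rate; your inner-product approach would deliver a comparable rate (from $|\langle\pi(\beta)\tilde\chi_\vre,\tilde\chi_\vre\rangle - V(\Gamma)^{-2}|\le E(t)\|\tilde\chi_\vre\|_2^2$) but the paper's choice makes the optimisation over $\vre$ slightly more transparent.

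Two small cosmetic points: (a) the auxiliary parameter $\vre'$ is unnecessary — taking $\vre'=\vre$ already gives $(B_s^-(\vre))^+(\vre)\subset B_s\subset (B_s^+(\vre))^-(\vre)$; and (b) the phrase ``$m_G(B_s^\pm(\vre))/m_G(B_s)\to 1$'' is imprecise — the correct statement is that for large $s$ these ratios lie in $[(1+\delta)^{-1},1+\delta]$ by well-roundedness and the inclusions $B_s^-(\vre)\subset B_s\subset B_s^+(\vre)$, which is exactly what the final squeeze over $\delta\to0$ uses anyway.
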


The argument of the proof of Theorem \ref{main term} also applies to count points in translated cosets of lattice subgroups.

\begin{corollary}\label{cor:shift0}
Under the conditions of Theorem \ref{main term}, 
 $$\lim_{t\to \infty} \frac{\abs{x\Gamma y^{-1}\cap B_t}}{m_G( B_t)}=\frac{1}{V(\Gamma)}$$
for  every $x,y\in G$.
\end{corollary}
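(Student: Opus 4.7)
The plan is to reduce Corollary \ref{cor:shift0} to Theorem \ref{main term} applied to the translated family $\tilde B_t := x^{-1}B_t y$. The elementary bijection $\gamma \mapsto x\gamma y^{-1}$ gives the identity
\[
\abs{x\Gamma y^{-1}\cap B_t} = \abs{\Gamma \cap x^{-1}B_t y} = \abs{\Gamma \cap \tilde B_t},
\]
and since $G$ is unimodular (as it admits a lattice), $m_G(\tilde B_t) = m_G(B_t)$. It therefore suffices to verify that $\tilde B_t$ satisfies the two hypotheses of Theorem \ref{main term}: well-roundedness and the stable mean ergodic theorem on $L^2(G/\Gamma)$.

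For the mean ergodic theorem, a change of variable $h = x^{-1}h' y$ in the integral defining the averaging operator yields the intertwining relation
\[
\pi_{G/\Gamma}(\tilde\beta_t) = U_{x^{-1}}\,\pi_{G/\Gamma}(\beta_t)\,U_y,
\]
where $U_g$ denotes the left regular representation on $L^2(G/\Gamma)$, $(U_g f)(z\Gamma):=f(g^{-1}z\Gamma)$. Each $U_g$ is a unitary operator that fixes constant functions and preserves $\int f\, dm_{G/\Gamma}$. Consequently the $L^2$-convergence $\pi_{G/\Gamma}(\beta_t)f \to \int_{G/\Gamma} f\, dm_{G/\Gamma}$ transfers unchanged to $\pi_{G/\Gamma}(\tilde\beta_t)f$, and the same reasoning applies uniformly to the thickened and thinned families $\tilde B_t^{\pm}(\vre)$, which furnishes the stable MET for $\tilde B_t$.

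For well-roundedness, the decomposition
\[
\mathcal{O}_\vre\,\tilde B_t\,\mathcal{O}_\vre = x^{-1}\bigl(x\mathcal{O}_\vre x^{-1}\bigr)B_t\bigl(y\mathcal{O}_\vre y^{-1}\bigr)y
\]
shows that $\tilde B_t$ is well-rounded with respect to any neighborhood basis $\tilde{\mathcal{O}}_\vre$ chosen so that $x\tilde{\mathcal{O}}_\vre x^{-1}$ and $y\tilde{\mathcal{O}}_\vre y^{-1}$ are contained in $\mathcal{O}_\vre$; since conjugation by any fixed element is a homeomorphism of $G$ fixing the identity, such a $\tilde{\mathcal{O}}_\vre$ is a valid decreasing family of symmetric identity neighborhoods. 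Theorem \ref{main term} applied to $\tilde B_t$ with respect to $\tilde{\mathcal{O}}_\vre$ then completes the proof. The only subtle point is this transfer of well-roundedness under conjugation by fixed elements; in every concrete example of interest (connected Lie groups with Riemannian balls, or products with a fixed compact-open subgroup in the totally disconnected factor) it is automatic since conjugation is Lipschitz near the identity.
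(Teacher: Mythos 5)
The paper's own proof goes a different (and cleaner) route: it leaves the sets $B_t$, the neighbourhood family $\mathcal{O}_\vre$, and hence the averaging operators $\pi_{G/\Gamma}(\beta_t^\pm)$ completely unchanged, and instead replaces the test function $\phi_\vre$ by the shifted function
$$\phi_\vre^y(gx\Gamma)=\sum_{\gamma\in \Gamma} \chi_{\vre}(gx \gamma y^{-1}),$$
whose integral against the same averages over $B_t^\pm(\vre)$ brackets $\abs{x\Gamma y^{-1}\cap B_t}$. The stable MET hypothesis is then applied verbatim, to the new $L^2$ function, and nothing about the sets needs to be re-verified.

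Your approach of moving the sets to $\tilde B_t = x^{-1}B_t y$ is the one the authors themselves entertained (it survives in a commented-out remark in the source with an unresolved ``to do''), and there is a genuine gap in the step you pass over in one sentence. The unitary intertwining $\pi_{G/\Gamma}(\tilde\beta_E) = U_{x^{-1}}\pi_{G/\Gamma}(\beta_E)U_y$ does transfer the MET from averages over any fixed set $E$ to averages over $x^{-1}E y$; applied to $E = B_t^{\pm}(\vre)$, this gives the MET for $x^{-1}B_t^{\pm}(\vre)y$. But the \emph{stable} MET for $\tilde B_t$ with respect to your new basis $\tilde{\mathcal{O}}_\vre$ requires the MET for the sets
$$\tilde B_t^+(\vre)=\tilde{\mathcal{O}}_\vre\, \tilde B_t\, \tilde{\mathcal{O}}_\vre = x^{-1}\bigl(x\tilde{\mathcal{O}}_\vre x^{-1}\bigr)B_t\bigl(y\tilde{\mathcal{O}}_\vre y^{-1}\bigr)y,$$
and the two conjugates $x\tilde{\mathcal{O}}_\vre x^{-1}$ and $y\tilde{\mathcal{O}}_\vre y^{-1}$ are in general \emph{distinct} proper subsets of $\mathcal{O}_\vre$, so $\tilde B_t^+(\vre)$ is $x^{-1}E_\vre y$ for some $E_\vre$ strictly between $B_t$ and $B_t^+(\vre)$, not $x^{-1}B_t^{+}(\vre)y$. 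The MET hypothesis for $B_t^{\pm}(\vre)$ says nothing about averages over such intermediate sets $E_\vre$, so the stable MET for $\tilde B_t$ with respect to $\tilde{\mathcal{O}}_\vre$ does not follow formally from the stable MET for $B_t$. Note also that you misidentify the subtle point at the end: the transfer of well-roundedness under conjugation is fine (your argument for it works), whereas it is the transfer of the MET hypothesis that fails. In the semisimple/$S$-algebraic setting this gap is harmless because there the MET rate depends only on the volume of the averaging set (via the spectral transfer principle and Kunze--Stein phenomenon, see the remark following Definition \ref{stable}), but Theorem \ref{main term} is stated for general lcsc groups, where the stable MET is a hypothesis that must be checked for exactly the perturbations that appear, and your intertwining produces the wrong perturbations.
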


\ignore{
\begin{remark}\label{cosets}
We note that under the assumptions of Theorem \ref{main term}, we also have
\begin{equation}\label{eq:g}
\lim_{t\to \infty} \frac{\abs{x\Gamma y^{-1} \cap B_t}}{m_G( B_t)}=\frac{1}{V(\Gamma)}
\end{equation}
for every $x,y\in G$.  
In particular, any left or right coset of $\Gamma$ has the same asymptotic 
count as $\Gamma$. Indeed, to verify \eqref{eq:g} it is sufficient to observe that
the family $x^{-1}B_t y$ is well-rounded w.r.t. to the basis $\mathcal{O}_\vre'=
x^{-1}\mathcal{O}_\vre x\cap y^{-1}\mathcal{O}_\vre y$
of 
neighbourhoods of identity, and the averages supported $x^{-1}B_t y$ satisfy
the stable mean ergodic theorem w.r.t. $\mathcal{O}_\vre'$.

We will also prove a quantitative version of \eqref{eq:g} below
(see Corollaries \ref{cor:shift} and \ref{c:compact}). 
\end{remark} 

\todo{Question is this true in full generality for all groups ??? is $x^{-1}B_t y$  a well rounded family, in general ? }
}

As noted in \S 1.1, to handle the error term we will use a quantitative estimate on the rate of $L^2$-norm convergence of the averages $\pi_{G/\Gamma}(\beta_t) $ to the ergodic mean, together with a quantitative form of well-roundedness. This will give a uniform quantitative solution to the lattice point counting problem. 
Before we formulate this result, we summarise our notation:
\begin{align}\label{eq:not}
c, a & =\hbox{the H\"older well-roundedness parameters of the family $B_t$,}\nonumber\\
m_0, \rho & =\hbox{the local upper dimension estimate for the group $G$,}\\
E(t) & = \hbox{the error estimate in the stable mean ergodic theorem for $B_t$.}\nonumber
\end{align}
For $g\in G$, we set
\begin{align*}
\vre_0(g,\Gamma) &=\sup\{\vre>0:\; \hbox{$\mathcal{O}_{\vre}^2g$ injects in $G/\Gamma$}\}.
\end{align*} 

\begin{theorem}\label{error estimate}{\bf Lattice point problem : error term.}
Let $G$ be an lcsc group, and 
$B_t$ a H\"older well-rounded family of subsets of $G$, w.r.t. a family $\mathcal{O}_\vre$ of upper local dimension at most $\rho$.  
Let $\Gamma \subset G$ be any discrete lattice subgroup, and assume that the averages $\beta_t$ satisfy  the stable quantitative mean ergodic theorem in $L^2(G/\Gamma)$ with 
 rate $E(t)$, and that $\vre_0(e,\Gamma)\ge \vre_0$. 
Then there exists $t_0>0$ such that for $t\ge t_0$,
\begin{equation}\label{eq:AA}
\abs{\frac{\abs{\Gamma\cap B_t}}{m_G( B_t)}- \frac{1}{V(\Gamma)}}\le   A\, E(t)^{a/(\rho+a)}\,\,,
\end{equation}
where $A=(4 m_0^{-1})^{a/(\rho+a)} (c\, m_G(\mathcal{O}_{\vre_0})^{-1})^{\rho/(\rho+a)}$.
\end{theorem}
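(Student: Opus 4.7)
The plan is the standard test-function / sandwich argument in the spirit of Eskin--McMullen and Duke--Rudnick--Sarnak: test the stable quantitative mean ergodic theorem against a bump concentrated near $e\Gamma$, and convert the quantitative statement into a quantitative lattice count using H\"older well-roundedness together with the local dimension bound.

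Fix $\vre \in (0, \vre_0)$, to be optimized. Since $\vre_0(e, \Gamma) \geq \vre_0$, the neighbourhood $\mathcal{O}_\vre^2 \cdot e\Gamma$ injects into $G/\Gamma$, so the projection $\Psi_\vre \in L^2(G/\Gamma)$ of the normalized indicator $\phi_\vre = m_G(\mathcal{O}_\vre)^{-1} \mathbf{1}_{\mathcal{O}_\vre}$ is simply the image of $\phi_\vre$ on a fundamental domain, with
\[
\int_{G/\Gamma} \Psi_\vre \, d\mu = \frac{1}{V(\Gamma)}, \qquad \|\Psi_\vre\|_{L^2(G/\Gamma)}^2 = \frac{1}{V(\Gamma)\, m_G(\mathcal{O}_\vre)}.
\]
For any bounded $A \subset G$, a direct unfolding of $m_G(A) \langle \pi_{G/\Gamma}(\beta_A)\Psi_\vre, \Psi_\vre\rangle$ to $G$ (via Fubini and the quotient-measure identity) expresses it as a sum over $\gamma \in \Gamma$ whose $\gamma$-th term lies in $[0, V(\Gamma)^{-1}]$, vanishes for $\gamma \notin A^+(\vre)$, and equals $V(\Gamma)^{-1}$ whenever $\gamma \in A^-(\vre)$ (since then $u\gamma\mathcal{O}_\vre \subset A$ for all $u \in \mathcal{O}_\vre$). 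This yields the sandwich
\[
\frac{|\Gamma \cap A^-(\vre)|}{V(\Gamma)\, m_G(A)} \;\leq\; \bigl\langle \pi_{G/\Gamma}(\beta_A) \Psi_\vre, \Psi_\vre \bigr\rangle \;\leq\; \frac{|\Gamma \cap A^+(\vre)|}{V(\Gamma)\, m_G(A)}.
\]

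Now insert the MET: by the stable quantitative mean ergodic theorem and Cauchy--Schwarz,
\[
\left| \bigl\langle \pi_{G/\Gamma}(\beta_A) \Psi_\vre, \Psi_\vre \bigr\rangle - V(\Gamma)^{-2} \right| \leq E(t)\, \|\Psi_\vre\|_{L^2}^2 = \frac{E(t)}{V(\Gamma)\, m_G(\mathcal{O}_\vre)}
\]
simultaneously for $A \in \{B_t^+(\vre), B_t^-(\vre)\}$ (stability is precisely what ensures the same rate $E(t)$ for both families). Applying the sandwich with $A = B_t^+(\vre)$ together with the inclusion $B_t \subset (B_t^+(\vre))^-(\vre)$ bounds $|\Gamma \cap B_t|$ from above; applying it with $A = B_t^-(\vre)$ together with $(B_t^-(\vre))^+(\vre) \subset B_t$ gives the matching lower bound.

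Finally, H\"older well-roundedness gives $m_G(B_t^\pm(\vre))/m_G(B_t) = 1 + O(c\vre^a)$ and the local dimension bound gives $m_G(\mathcal{O}_\vre) \geq m_0 \vre^\rho$. Dividing through by $m_G(B_t)$ produces a two-term estimate of the shape
\[
\left| \frac{|\Gamma \cap B_t|}{m_G(B_t)} - \frac{1}{V(\Gamma)} \right| \leq C_1\, c\vre^a + C_2\, \frac{E(t)}{m_0\, \vre^\rho},
\]
which one balances by choosing $\vre \asymp E(t)^{1/(a+\rho)}$, yielding the exponent $a/(\rho + a)$. The main technical obstacle is purely bookkeeping: tracking the precise multiplicative constants through the well-roundedness comparison and the balancing of $\vre$ so as to recover the exact constant $A = (4 m_0^{-1})^{a/(\rho+a)}(c\, m_G(\mathcal{O}_{\vre_0})^{-1})^{\rho/(\rho+a)}$ stated in the theorem.
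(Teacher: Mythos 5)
Your argument is correct, and it takes a genuinely different route from the paper's. The paper works \emph{pointwise}: it applies the mean ergodic theorem to the periodized bump $\phi_\vre$, uses Chebyshev's inequality to bound the measure of the bad set $\{h\Gamma : |\pi_{G/\Gamma}(\beta_A)\phi_\vre(h\Gamma) - V(\Gamma)^{-1}| > \delta\}$, chooses $\delta$ so that this measure is strictly less than $m_{G/\Gamma}(\mathcal{O}_\vre\Gamma)$, and thereby extracts a single good reference point $h\in\mathcal{O}_\vre$; this $h$ is then fed into the geometric sandwich (Lemma~\ref{comparison}), which compares $|\Gamma\cap B_t|$ with $\int_{B_t^\pm(\vre)}\phi_\vre(g^{-1}h\Gamma)\,dm_G(g)$. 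You instead work in an \emph{averaged} way: you pair $\pi_{G/\Gamma}(\beta_A)\phi_\vre$ against $\phi_\vre$ itself, apply Cauchy--Schwarz to the MET estimate, and unfold the inner product $m_G(A)\langle\pi_{G/\Gamma}(\beta_A)\phi_\vre,\phi_\vre\rangle$ over $\gamma\in\Gamma$ to obtain directly the sandwich between $|\Gamma\cap A^-(\vre)|$ and $|\Gamma\cap A^+(\vre)|$. Substituting $A=B_t^\pm(\vre)$ and using $(B_t^-(\vre))^+(\vre)\subset B_t\subset (B_t^+(\vre))^-(\vre)$ then closes the argument. This is a legitimate variant and it is slightly slicker: you never need to introduce the free parameter $\delta$ and then relate it back to the Chebyshev bound, so the intermediate constant in front of $\vre^{-\rho}E(t)$ comes out as $2$ rather than the paper's $4$; the exponent $a/(\rho+a)$ and the structure of the constant $A$ are identical. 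Two small points of hygiene you elide: you need $V(\Gamma)\ge m_G(\mathcal{O}_{\vre_0})$ (which follows from the injectivity of $\mathcal{O}_{\vre_0}^2 e \to G/\Gamma$) to convert the $c\vre^a V(\Gamma)^{-1}$ term into $c\vre^a m_G(\mathcal{O}_{\vre_0})^{-1}$, and you need $c\vre^a<1$, which is arranged by taking $t\ge t_0$ so that the optimizing $\vre(t)$ is below $\min\{\vre_0,\vre_1,c^{-1/a}\}$. Both are implicit in your ``bookkeeping'' remark and cause no trouble.
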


\begin{remark}\label{rem:main}
Let us note the following regarding Theorem \ref{error estimate}. 
\begin{enumerate}
\item The estimate (\ref{eq:AA}) is independent of the choice of Haar measure $m_G$.
\item If the sets $B_t$ are bi-invariant under a compact subgroup $K$ of $G$, we can take $\rho$ to
be the upper local dimension of $K\backslash G$. Indeed, then the proof of Theorem \ref{error estimate} can then be carried out in the space $L^2(K\backslash G/\Gamma)$.

\item 
The constant $t_0$ depends on all the parameters in (\ref{eq:not}), as well as $\vre_0$, $\vre_1$, and $t_1$
(appearing in Definitions \ref{admissible}, \ref{def:local}, \ref{stable}).
\end{enumerate}
\end{remark}

As to counting points in translated cosets of the lattice, we note the following result which will be shown below to follow from  Theorem \ref{error estimate}.

\begin{corollary}\label{cor:shift}
Under the conditions of Theorem \ref{error estimate}, there exists $t_0>0$ such that for  $x,y\in G$,
satisfying  $\vre_0(x,\Gamma),\vre_0(y,\Gamma)\ge \vre_0$, and $t\ge t_0$,  
\begin{equation}\label{eq:xy}
\abs{\frac{\abs{x\Gamma y^{-1}\cap B_t}}{m_G( B_t)}- \frac{1}{V(\Gamma)}}\le   A\, E(t)^{a/(\rho+a)}
\end{equation}
where $A$ is the same as in Theorem \ref{error estimate}.
\end{corollary}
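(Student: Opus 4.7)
The identity $|x\Gamma y^{-1}\cap B_t|=|\Gamma\cap x^{-1}B_ty|$ might suggest applying Theorem \ref{error estimate} directly to the shifted family $x^{-1}B_ty$, but in general this family is only H\"older well-rounded with respect to a conjugated basis of neighbourhoods of the identity, which would alter the constant $A$. My plan is instead to retrace the proof of Theorem \ref{error estimate} in place, using two test functions on $G/\Gamma$ localised near $x\Gamma$ and $y\Gamma$ respectively rather than a single test function at $e\Gamma$.

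For $0<\vre<\vre_0$, the hypotheses $\vre_0(x,\Gamma),\vre_0(y,\Gamma)\ge\vre_0$ ensure that $\mathcal{O}_\vre^2 x$ and $\mathcal{O}_\vre^2 y$ inject into $G/\Gamma$. I would define the lifts
$$\phi^x_\vre(g\Gamma)=\sum_{\gamma\in\Gamma}\mathbf{1}_{\mathcal{O}_\vre x}(g\gamma),\qquad\phi^y_\vre(g\Gamma)=\sum_{\gamma\in\Gamma}\mathbf{1}_{\mathcal{O}_\vre y}(g\gamma),$$
which by injectivity are $\{0,1\}$-valued, each with $L^2$-norm squared and integral equal to $m_G(\mathcal{O}_\vre)/V(\Gamma)$, exactly as in the base case $x=y=e$. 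Unfolding the inner product (substituting $g=\omega x$, $\omega\in\mathcal{O}_\vre$, and expanding the $\Gamma$-sum in $\phi^y_\vre$) produces
$$\langle\pi_{G/\Gamma}(\beta_t)\phi^y_\vre,\phi^x_\vre\rangle=\frac{1}{V(\Gamma)m_G(B_t)}\int_{\mathcal{O}_\vre}\sum_{\gamma\in\Gamma}m_G\bigl(B_t\cap\omega x\gamma y^{-1}\mathcal{O}_\vre\bigr)\,d\omega,$$
which by H\"older well-roundedness is sandwiched between $\frac{m_G(\mathcal{O}_\vre)^2}{V(\Gamma)m_G(B_t)}\,|x\Gamma y^{-1}\cap B_t^\pm(\vre)|$.

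Combining this geometric sandwich with the spectral bound
$$\bigl|\langle\pi_{G/\Gamma}(\beta_t)\phi^y_\vre,\phi^x_\vre\rangle - \tfrac{m_G(\mathcal{O}_\vre)^2}{V(\Gamma)^2}\bigr|\le E(t)\|\phi^y_\vre\|_2\|\phi^x_\vre\|_2 = E(t)\tfrac{m_G(\mathcal{O}_\vre)}{V(\Gamma)}$$
from the stable quantitative mean ergodic theorem, and bootstrapping by applying the same analysis to $B_t^\pm(\vre)$ in place of $B_t$ (which is legitimate by \emph{stability}), the argument of Theorem \ref{error estimate} produces
$$\abs{\frac{\abs{x\Gamma y^{-1}\cap B_t}}{m_G(B_t)}-\frac{1}{V(\Gamma)}}\le \frac{c\vre^a}{V(\Gamma)}+\frac{2E(t)V(\Gamma)}{m_G(\mathcal{O}_\vre)}.$$
Using $m_G(\mathcal{O}_\vre)\ge m_0\vre^\rho$ and optimising $\vre\in(0,\vre_0)$ exactly as in Theorem \ref{error estimate} then yields the rate $A\,E(t)^{a/(\rho+a)}$ with the same constant $A$, since the parameters $c,a,m_0,\rho,\vre_0$ play identical roles and the injectivity assumption at both $x$ and $y$ ensures the translated test functions have the same $L^2$-norms and integrals as their base-case analogues. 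The only real obstacle is this bookkeeping of constants; the injectivity-radius hypothesis at \emph{both} points is precisely what keeps $A$ from being replaced by a worse $x,y$-dependent constant.
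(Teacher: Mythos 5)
Your argument is correct in substance but takes a genuinely different route from the paper's. The paper, in both Theorem \ref{error estimate} and its Corollary \ref{cor:shift}, uses a \emph{single} periodised bump function (here $\phi^y_\vre$) and a Chebyshev argument: the $L^2$ decay implies that the set where $\pi_{G/\Gamma}(\beta_t)\phi^y_\vre$ deviates by more than $\delta$ from its mean has measure strictly smaller than $m_{G/\Gamma}(\mathcal{O}_\vre x\Gamma)$, so one can pick a good representative $h\in\mathcal{O}_\vre x$ and feed it into the Lemma \ref{comparison}-type sandwich. You instead pair $\pi_{G/\Gamma}(\beta_t)\phi^y_\vre$ against a \emph{second} bump function $\phi^x_\vre$ localised at $x$, and bound the inner product directly by $\|\pi_{G/\Gamma}(\beta_t)\phi^y_\vre-\tfrac{m_G(\mathcal{O}_\vre)}{V(\Gamma)}\|_2\,\|\phi^x_\vre\|_2\le E(t)\|\phi^y_\vre\|_2\|\phi^x_\vre\|_2$; the unfolding then yields the same geometric sandwich. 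Your route is arguably cleaner: it bypasses the Chebyshev step entirely and, upon extracting the lattice count, yields $\tfrac{E(t)}{m_G(\mathcal{O}_\vre)}$ in place of the paper's $4\tfrac{E(t)}{m_G(\mathcal{O}_\vre)}$, so after optimising in $\vre$ the resulting constant is in fact slightly \emph{smaller} than $A$ --- hence the claimed bound holds a fortiori.

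One arithmetic slip: in the display following ``the argument of Theorem \ref{error estimate} produces,'' the second error term should read $\tfrac{2E(t)}{m_G(\mathcal{O}_\vre)}$, without the factor $V(\Gamma)$ in the numerator. Dividing the spectral estimate $\langle\pi_{G/\Gamma}(\beta_t)\phi^y_\vre,\phi^x_\vre\rangle\le\tfrac{m_G(\mathcal{O}_\vre)^2}{V(\Gamma)^2}+E(t)\tfrac{m_G(\mathcal{O}_\vre)}{V(\Gamma)}$ through by $\tfrac{m_G(\mathcal{O}_\vre)^2}{V(\Gamma)m_G(B_t)}$ gives $\tfrac{1}{V(\Gamma)}+\tfrac{E(t)}{m_G(\mathcal{O}_\vre)}$, not $\tfrac{1}{V(\Gamma)}+\tfrac{E(t)V(\Gamma)}{m_G(\mathcal{O}_\vre)}$. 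Were the $V(\Gamma)$ really there, the final constant would depend on $V(\Gamma)$, contradicting your own conclusion that $A$ is the stated $\Gamma$-independent constant. With the correction made, the optimisation of $\vre$ runs exactly as you describe.
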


\begin{remark}
If $\Omega$ is a subset of $G$ bounded modulo $\Gamma$, then (\ref{eq:xy}) 
holds for all $x,y\in \Omega$ and $t\ge t_0(\hbox{diam}(\Omega\Gamma))$,
where the constant $A$ is given as in Theorem \ref{error estimate} with 
$\vre_0=\inf\{\vre(g,\Gamma):\, g\in \Omega\}$.
In particular, if  the lattice $\Gamma$ is co-compact, (\ref{eq:xy}) 
holds uniformly over {\it all} $x,y\in G$.
\end{remark}

\ignore{
 \todo{Question : did we really prove this in full generality for all groups, not just for semisimple groups ?   Are we claiming that $x^{-1}B_t y$ are Holder well-rounded ?  Also, what if the spectral gap is not strong, even in the semisimple case ? Do we then have a method of passing directly from a rate for $B_t$ to a rate for $B_t(\vare)$, even for semisimple groups, when the spectral gap is not strong ?}
}

 An important consequence of Theorem \ref{error estimate} is that given the family $B_t$, the error estimate depends only on the size of the spectral gap, and on the  size of the largest neighbourhood $\mathcal{O}_\vare^2$ which injects into $G/\Gamma$. In particular, the error estimate holds {\it uniformly}  for all lattice subgroups in $G$ for which these two parameters have fixed lower bounds.   
 
 Specialising Theorem \ref{error estimate} further, we fix the lattice $\Gamma$, and note that then the error estimate holds uniformly over an infinite family of finite index subgroups of $\Gamma$, provided only that the family satisfies a uniform spectral estimate (which is a quantitative version of property $\tau$).  This fact will be exploited in Section 
 \ref{sec:conguence}, where we consider the congruence subgroups of an arithmetic lattice, and has other uses as well. We formulate it separately, as follows. 
 
 \begin{corollary}\label{c:compact}
Assume that the conditions of Theorem \ref{error estimate} hold for the lattice $\Gamma_0$ and a family $\Gamma_j$, $j\in \NN$ of its finite-index subgroups. Given a set $\Omega$ which is compact modulo $\Gamma_0$, there exists $t_0>0$ such that for $t\ge t_0$ and  $x,y\in \Omega$, uniformly for all $j\in \NN$
$$
\abs{\frac{\abs{x\Gamma_j y^{-1}\cap B_t}}{m_G( B_t)}- \frac{1}{V(\Gamma_j)}}\le   A\, E(t)^{a/(\rho+a)}
$$
where  $A$ is given as in Theorem \ref{error estimate} with $\vre_0=\inf\{\vre_0(g,\Gamma):\, g\in \Omega\}$. 

 \end{corollary}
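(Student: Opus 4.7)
My plan is to deduce the corollary directly from Corollary \ref{cor:shift} applied separately to each subgroup $\Gamma_j$, verifying that every quantity entering the constant $A$ and the threshold $t_0$ can be chosen independently of $j$. The key structural observation is that the injectivity radius can only improve when one passes to a subgroup. Explicitly, $\mathcal{O}_\vre^2 g$ injects in $G/\Gamma$ if and only if the non-identity elements of the conjugate $g\Gamma g^{-1}$ avoid $(\mathcal{O}_\vre^2)^{-1}\mathcal{O}_\vre^2$; since $\Gamma_j\subset \Gamma_0$ implies $g\Gamma_j g^{-1}\subset g\Gamma_0 g^{-1}$, we obtain
$$
\vre_0(g,\Gamma_j) \,\ge\, \vre_0(g,\Gamma_0) \quad\text{for every } g\in G \text{ and every } j\in \NN.
$$

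Next I would use the compactness hypothesis to make this lower bound uniform over $\Omega$. Since the image of $\Omega$ in $G/\Gamma_0$ is relatively compact and the function $g\Gamma_0\mapsto \vre_0(g,\Gamma_0)$ is positive and lower semicontinuous on $G/\Gamma_0$, the quantity
$$
\vre_0 \,:=\, \inf\{\vre_0(g,\Gamma_0):\, g\in \Omega\}
$$
is strictly positive. Combining with the monotonicity above, we get $\vre_0(x,\Gamma_j),\,\vre_0(y,\Gamma_j)\ge \vre_0$ simultaneously for every $x,y\in \Omega$ and every $j\in\NN$.

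It then remains to apply Corollary \ref{cor:shift} to each $\Gamma_j$ with this common $\vre_0$. By hypothesis, the stable quantitative mean ergodic theorem holds in $L^2(G/\Gamma_j)$ with the \emph{same} rate function $E(t)$ for all $j$; this is precisely the uniform (quantitative property $\tau$) spectral estimate built into the assumption that the conditions of Theorem \ref{error estimate} hold for the whole family. The H\"older well-roundedness parameters $(a,c)$ of $B_t$, the local dimension data $(m_0,\rho)$ of $G$, the parameters $\vre_1,t_1$ in Definitions \ref{admissible}--\ref{stable}, and the volume $m_G(\mathcal{O}_{\vre_0})$ are all manifestly independent of $j$. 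Reading off the explicit form of $A$ in Theorem \ref{error estimate} and tracing the dependencies of $t_0$ listed in Remark \ref{rem:main}, we find that both $A$ and $t_0$ can be chosen uniformly in $j$, and the conclusion follows. The one genuine point that needs argument, rather than bookkeeping, is the monotonicity of the injectivity radius under passage to subgroups together with its lower semicontinuity on $G/\Gamma_0$; everything else is an application of the previously established results.
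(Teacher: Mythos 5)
Your proof is correct and follows the same approach as the paper's own: monotonicity of the injectivity radius under passage to subgroups ($\Gamma_j\subset\Gamma_0$ implies $\vre_0(g,\Gamma_j)\ge\vre_0(g,\Gamma_0)$), uniformity of the rate $E(t)$ by hypothesis, and then appeal to Corollary~\ref{cor:shift}. The paper's proof is terser—it states the injectivity transfer at the level of a fixed $\vre_0$ and leaves the positivity of $\inf_{g\in\Omega}\vre_0(g,\Gamma_0)$ implicit—whereas you supply the lower-semicontinuity plus compactness argument and the detailed bookkeeping of the constants, which is a reasonable expansion of the same reasoning.
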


\ignore{
\todo{Actually here there is dependence of the constant $A$ on $\Omega$ too ! Also, the same problem with cosets}
}


\subsection{Comparison with the existing literature}

The problem of the asymptotic development of the number of lattice points in Euclidean space has a long history, going  back to Gauss,
who initiated the study of the number of integral points in domains of the Euclidean plane. For Euclidean space, and more generally for spaces with polynomial volume growth,
there is a simple geometric argument to derive the main term in the asymptotic, and an error estimate for sufficiently regular convex sets has been established using the Fourier transform \cite{Hl}\cite{Hz}. For Lie groups of exponential volume growth already the main term, and certainly the error term, require significant analytic techniques. The first non-Euclidean counting result in the 
semisimple case is due to Delsarte \cite{d}. Currently there are several 
different approaches to non-Euclidean lattice point counting problems for $\Gamma\subset G$ in certain cases, as follows.

\begin{enumerate}
\item via direct spectral expansion and regularisation of the automorphic kernel on $G/\Gamma$, $G$ semisimple 
\cite{bt1,bt2,BMW,CLT,d,DRS,go,h,lp,le,p,mw,ST,STBT1,STBT2,se},

\item via mixing of $1$-parameter flows, or equivalently decay of matrix coefficients for semisimple groups \cite{b1,b2,BO,EM,GMO,mar0,mar1, mar,Ma, mu1, mu2},
 
\item via symbolic coding of Anosov flows and transfer operator techniques \cite{la,po,q,sh}, for lattices in simple groups of real rank one, 

\item via the Weil bound for Kloosterman sums \cite{b,hz}, when counting in congruence subgroups 
of $\hbox{SL}_2$,

\item via equidistribution of unipotent flows \cite{EMS,GO,GO2} (this approach does not
provide an error term in the asymptotic).

\end{enumerate}

Our approach, which is different from those listed above, is based on the mean ergodic theorem for the averages $\beta_t$ acting on $G/\Gamma$,
and has a number of advantages: 
\begin{itemize}
\item {\it Simplicity of the method.}
The quantitative mean ergodic theorem as well as the resulting estimates for counting lattice points are  established by  relatively elementary spectral
and geometric comparison arguments which hold in great generality.  They are valid, in particular, for all semisimple $S$-algebraic  groups and semisimple adele groups, but are not restricted to them. The arguments avoid the complications arising from direct spectral expansion of the automorphic kernel on $G/\Gamma$ (in particular, those associated with regularisation of Eisenstein series) which method (1) introduces. As a result,  this approach allows a considerable  expansion of the scope of quantitative lattice point counting results. 

\item {\it Quality of error term.} The quality of the error term derived via the quantitative mean ergodic theorem matches or exceeds the currently known
  best bound in all the non-Euclidean lattice point counting problems we are aware of, with the exception
  of \cite{se}, \cite{lp}, \cite{p} and \cite{BMW}.
Note that the latter results deal only with lattices in real rank one Lie groups (or their products), and only with sets which are bi-invariant under 
a maximal compact subgroup. These assumptions make it possible to deploy method (1) using very detailed information regarding the special functions 
appearing in the spherical spectral expansion.  For more general domains, the approach 
via the decay of matrix coefficients in method (2) can be used to give an error estimate in the semisimple case, but its quality is inferior to the one stated above. (For more details  cf. \cite[Ch.~2]{GN} and compare with \cite{Ma} for the case of real groups, and with \cite{GMO} for the case of adele groups). 

\item {\it Uniformity over lattice families.} The quantitative ergodic theorem governing the behaviour of 
the averages $\beta_t$ is valid for all ergodic probability-measure-preserving actions of $G$
satisfying the same spectral bound. In particular it holds uniformly for all the homogeneous probability spaces $G/\Gamma$, as $\Gamma$ ranges over lattice subgroups of $G$, provided they satisfy a fixed spectral gap estimate. Together with an obvious
necessary geometric condition on the fundamental domains, our approach solves the lattice point counting problem uniformly over this class of lattices.  
In particular, the counting result holds uniformly for all the finite index
subgroups of an irreducible lattice $\Gamma$ in a semisimple $S$-algebraic group,  which satisfy a uniform spectral gap property, namely property $\tau$, for example congruence subgroups (see \S  \ref{sec:conguence} and \S 1.4 for some applications of the uniformity property).

\item {\it Generality of the sets}. The mean ergodic theorem is typically very robust, namely it holds in great generality  
and is usually stable under geometric perturbations of the averaging sets. The same two features hold for the quantitative mean ergodic theorem on the homogeneous probability space $G/\Gamma$. 
This allows us to establish the first quantitative counting results for general families of non-radial
sets on semisimple groups, including such natural examples as  bi-sectors on symmetric spaces and affine
symmetric spaces (see Section \ref{sec:sym} and Section \ref{sec:affine}). Note for example that our
error estimate for 
tempered lattices in sectors in the hyperbolic plane matches the one recently obtained 
in \cite{b} via method (4), 
but applies for all lattices (in all semisimple groups) rather than just congruence subgroups of $\hbox{SL}_2(\ZZ)$.  

\item{\it Generality of the groups}.
Mean ergodic theorems originated in amenable ergodic theory and of course do not require the group to be semisimple. The lattice point counting results we present are valid in the case of lattices in amenable groups as well, for example in connected nilpotent or exponential-
solvable Lie groups. Other cases where they can be applied are the affine groups of Euclidean spaces, as well as the associated adele groups. We will treat these matters in more detail elsewhere, but emphasize here that the principle of deriving the solution to the lattice point counting problem from the mean ergodic theorem is completely general and valid for every lcsc group.  
In particular, the assumption of mixing for flows on the space $G/\Gamma$ --- called for in method (2) --- is not relevant to the lattice point counting problem.

\end{itemize}

\subsection{Applications of uniformity in counting}\label{s:application}
Theorem \ref{error estimate} and its corollaries have already found a number of applications beyond those we will describe  below in the present paper. In addition to numerous examples discussed in \cite[Ch.~2]{GN}, let us mention very briefly the following consequences of the uniformity in counting  over congruence subgroups of an arithmetic lattice in an algebraic group. 

\begin{enumerate}
\item For connected semisimple Lie groups, uniformity was 
first stated and utilised in the problem of sifting  
 the integral points $G(\ZZ)$ on the group variety $G(\RR)$ \cite{NS}. It plays an essential role in  establishing the existence of the right order of magnitude of almost prime points, 
 for example, almost prime integral unimodular matrices. Theorem \ref{uniform} which we formulate below allows the generalisation of these results to $S$-algebraic groups, for example to construct almost prime unimodular $S$-integral matrices. 
  It is also crucial in establishing the existence of almost prime points 
 on affine symmetric varieties (for example, integral symmetric matrices).

\item Given a affine homogeneous variety ${\sf X}$ of a semisimple algebraic group defined over $\mathbb{Q}$, it is possible to establish using uniformity in counting over congruence groups,  an effective result on lifting of integral points.
Namely, we show that every point in the image of the reduction map
${\sf X}(\mathbb{Z})\to {\sf X}(\mathbb{Z}/p\mathbb{Z})$ can be lifted to a point 
with coefficients of size $O(p^N)$ with fixed $N>0$.
On the other hand, such a result is false for general homogeneous varieties.

\item We obtain an estimate on the number of integral points on certain proper subvarieties ${\sf X}$ of a
group variety ${\sf G}$. Namely, we show that there exists uniform $\alpha\in (0,1)$ such that
$N_T({\sf X})=O_{\sf X}(N_T({\sf G})^\alpha)$, where $N_T(\cdot)$ denotes the number of integral
points with norm bounded by $T$. 
\end{enumerate}
The proofs of these results will be given in a separate paper.

\subsection{Organisation of the paper}
We prove the results stated in \S 1.2  in Section  \ref{sec:proof}.
 In Section \ref{sec:ex} we formulate a general recipe for counting lattice points
and  explain the case of sectors in the hyperbolic plane as a motivating example.
In the rest of the paper, we discuss several applications of our results.
We discuss lattices in semisimple $S$-algebraic groups (Section \ref{sec:s-arith}),
uniformity over congruence
subgroups and the density hypothesis (Section \ref{sec:conguence}),
rational points on group varieties (Section \ref{sec:rat}), 
and angular distribution of lattice points on symmetric spaces (Section \ref{sec:sym})
and on affine symmetric varieties (Section \ref{sec:affine}).

\section{Ergodic theorems and counting lattice points}\label{sec:proof}

In this section, we prove the results stated in the introduction.
Let $G$ be an lcsc group and $\Gamma$ a discrete lattice subgroup of $G$.
We denote by $\tilde{m}_{G/\Gamma}$ the measure induced on $G/\Gamma$
by our fixed choice of the Haar measure $m_G$  on $G$. 
Thus, $\tilde{m}_{G/\Gamma}(G/\Gamma)$ is the total measure  
of the locally symmetric space $G/\Gamma$, a quantity we denote by $V(\Gamma)$.
We also let $m_{G/\Gamma}$ denote 
the corresponding probability measure on $G/\Gamma$, namely 
$\tilde{m}_{G/\Gamma}/V(\Gamma)$.
Let $\mathcal{O}_\vre$ be a family of symmetric neighbourhoods of the identity and
$$
\chi_\vre=\frac{\chi_{\mathcal{O}_\vre}}{m_G(\mathcal{O}_\vre)}\,.
$$
We consider the function:
$$
\phi_\vre(g\Gamma)=\sum_{\gamma\in \Gamma} \chi_{\vre}(g\gamma).
$$
Note that $\phi_\vre$ is a measurable bounded 
function on $G/\Gamma$ with compact support, and 
\begin{equation}\label{eq:1}
\int_G\chi_\vre\,dm_G=1,\quad
\int_{G/\Gamma}\phi_\vre\,d\tilde{m}_{G/\Gamma}=1, 
\quad
\int_{G/\Gamma}\phi_\vre\,dm_{G/\Gamma}
=\frac{1}{V(\Gamma)}\,.
\end{equation}

Let us now note the following basic observations, which will allow us to reduce the 
lattice point counting problem to the ergodic theorem on $G/\Gamma$, together with a regularity property of the domains.

First, for any $\delta > 0$, $h\in G$ and $t>0$, 
the following are obviously equivalent, by definition, for any family of 
Haar-uniform measures $b_t$ on $G$,
\begin{equation}\label{eq:G_Gamma-1}
\abs{\pi_{G/\Gamma}(b_t)\phi_\vre(h\Gamma) 
-\frac{1}{V(\Gamma)}}\le\delta, 
\end{equation}
\begin{equation}\label{eq:G_Gamma-2}
\frac{1}{V(\Gamma)}-\delta \le \frac{1}{m_G(\text{supp }b_t)}
\int_{\text{supp } b_t}\phi_\vre(g^{-1}h\Gamma)dm_G(g)\le 
\frac{1}{V(\Gamma)}+\delta\,\,.
\end{equation}

We will estimate the first expression using the mean ergodic theorem and Chebycheff's inequality.  
On the other hand, the integral in the 
second expression is connected to lattice points as follows.

\begin{lemma}\label{comparison}
Let $B_t$ be a family of measurable subsets of $G$. 
Then for every $t>0$, $\vre>0$ and $h\in \mathcal{O}_\vre$,
$$
\int_{B_t^-(\vre)}\phi_\vre(g^{-1}h\Gamma)
\,dm_G(g)\le \abs{B_t \cap \Gamma}\le \int_{B_t^+(\vre)}
\phi_\vre(g^{-1}h\Gamma)\,dm_G(g)
$$
where $B_t^+(\vre)$ and $B_t^-(\vre)$ are defined as in \eqref{eq:btplus}.
\end{lemma}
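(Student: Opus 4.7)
The plan is to unfold the definition $\phi_\vre(g\Gamma) = \sum_{\gamma \in \Gamma} \chi_\vre(g\gamma)$, interchange sum and integral by Tonelli, and rewrite each term as the Haar measure of an intersection. Concretely, for any measurable $A \subset G$,
$$
\int_A \phi_\vre(g^{-1}h\Gamma)\,dm_G(g) = \sum_{\gamma\in\Gamma} \frac{1}{m_G(\mathcal{O}_\vre)}\,m_G\bigl(\{g\in A : g^{-1}h\gamma\in \mathcal{O}_\vre\}\bigr) = \sum_{\gamma\in\Gamma}\frac{m_G(A\cap h\gamma\mathcal{O}_\vre)}{m_G(\mathcal{O}_\vre)},
$$
where I use the symmetry $\mathcal{O}_\vre^{-1}=\mathcal{O}_\vre$ to pass from $g^{-1}h\gamma\in \mathcal{O}_\vre$ to $g\in h\gamma\mathcal{O}_\vre$. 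The rest of the argument just identifies which $\gamma$ contribute for the two choices $A = B_t^\pm(\vre)$.

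For the upper bound, take $A = B_t^+(\vre) = \mathcal{O}_\vre B_t \mathcal{O}_\vre$. The key geometric observation is that whenever $\gamma \in B_t\cap \Gamma$ and $h\in \mathcal{O}_\vre$, one has $h\gamma\mathcal{O}_\vre \subset \mathcal{O}_\vre B_t\mathcal{O}_\vre = B_t^+(\vre)$, so the corresponding term of the sum equals $m_G(h\gamma\mathcal{O}_\vre)/m_G(\mathcal{O}_\vre) = 1$. Dropping the (nonnegative) contributions from $\gamma\notin B_t$ gives
$$
\int_{B_t^+(\vre)} \phi_\vre(g^{-1}h\Gamma)\,dm_G(g) \ge \sum_{\gamma\in B_t\cap\Gamma} 1 = \abs{B_t\cap\Gamma}.
$$

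For the lower bound, take $A = B_t^-(\vre) = \bigcap_{u,v\in\mathcal{O}_\vre} uB_tv$. Here I want to show that only $\gamma\in B_t$ contribute, i.e.\ that $B_t^-(\vre)\cap h\gamma\mathcal{O}_\vre = \emptyset$ when $\gamma\notin B_t$. Suppose $g\in B_t^-(\vre)\cap h\gamma\mathcal{O}_\vre$, so $g = h\gamma v$ for some $v\in\mathcal{O}_\vre$. Applying the definition of $B_t^-(\vre)$ with the admissible pair $u=h$, $v=v$ (both lie in $\mathcal{O}_\vre$), we get $g \in hB_tv$, hence $g = hbv$ for some $b\in B_t$, and cancelling yields $\gamma = b\in B_t$. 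Since each term is bounded by $m_G(h\gamma\mathcal{O}_\vre)/m_G(\mathcal{O}_\vre) = 1$, the sum is at most $\abs{B_t\cap\Gamma}$.

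The proof is thus essentially bookkeeping, and I do not expect any genuine obstacle; the only subtlety is making sure the symmetry of $\mathcal{O}_\vre$ is used correctly when swapping $g^{-1}h\gamma\in\mathcal{O}_\vre$ for $g\in h\gamma\mathcal{O}_\vre$, and that the hypothesis $h\in \mathcal{O}_\vre$ is what lets us apply the definition of $B_t^\pm(\vre)$ on both sides in the arguments above.
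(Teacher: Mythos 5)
Your proof is correct and follows essentially the same route as the paper: both arguments unfold $\phi_\vre$, use the symmetry of $\mathcal{O}_\vre$ to identify the set $\{g : \chi_\vre(g^{-1}h\gamma)\ne 0\}$ with $h\gamma\mathcal{O}_\vre$, and then observe that for the upper bound $h\gamma\mathcal{O}_\vre \subset B_t^+(\vre)$ when $\gamma\in B_t$ (so each such term contributes exactly $1$), while for the lower bound only $\gamma\in B_t$ can contribute (so each term is at most $1$). Your rewriting of each term as $m_G(A\cap h\gamma\mathcal{O}_\vre)/m_G(\mathcal{O}_\vre)$ is just a slightly more explicit bookkeeping of the same support-containment facts the paper uses.
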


\begin{proof}
If $\chi_\vre(g^{-1}h\gamma)\ne 0$ 
for some $g\in B_t^-(\vre)$, $h\in\mathcal{O}_\vre$, $\gamma\in \Gamma$, then we obtain 
$$
\gamma\in h^{-1}\cdot B_t^-(\vre)
\cdot (\supp\, \chi_\vre)\subset B_t\
$$
since $\mathcal{O}_\vre B_t^-(\vre)\mathcal{O}_\vre \subset B_t$.
Hence, by the definition of $\phi_\vre$ and \eqref{eq:1},
$$
\int_{B_t^-(\vre)}\phi_\vre(g^{-1}h\Gamma)\,dm_G(g)
= \sum_{\gamma\in B_t\cap\Gamma} \int_{B_t}
 \chi_\vre(g^{-1}h\gamma)\,dm_G(g)\le \abs{B_t\cap\Gamma}.
$$

In the other direction, 
for $\gamma\in B_t\cap\Gamma$ and $h\in\mathcal{O}_\vre$,
$$
\supp(g\mapsto \chi_\vre(g^{-1}h\gamma))
=h\gamma(\supp\,\chi_\vre)^{-1}\subset B_t^+(\vre).
$$
Since $\chi_\vre\ge 0$ and \eqref{eq:1} holds,
$$
\int_{B^+_t(\vre)}\phi_\vre(g^{-1}h\Gamma)\,dm_G(g)\ge 
\sum_{\gamma\in B_t\cap\Gamma}
 \int_{B^+_t(\vre)}\chi_\vre(g^{-1}h\gamma)\,dm_G(g)= \abs{B_t\cap\Gamma},
$$
as required.
\end{proof}

\begin{proof}[Proof of Theorem \ref{main term}]
We use a small parameters $\delta>0$. Since the family $B_t$ is well-rounded,
there exists $\vre>0$ such that
\begin{equation}\label{eq:bbbb}
m_G(B_t^+(\vre))\le (1+\delta)m_G(B_t^-(\vre))
\end{equation}
for all sufficiently large $t$.

By the stable mean ergodic theorem and \eqref{eq:1},
$$
\norm{\frac{1}{m_G(B_t^+(\vre))}\int_{B^+_{t}(\vre)}
\phi_\vre(g^{-1}h\Gamma)\,dm_G(g) - \frac{1}{V(\Gamma)}}_{L^2(G/\Gamma)}
\to 0$$
and so 
$$
m_{G/\Gamma}\left(\set{h\Gamma\,:\, \abs{
\frac{1}{m_G(B_t^+(\vre))}\int_{B^+_{t}(\vre)}
\phi_\vre(g^{-1}h\Gamma)\,dm_G(g) - \frac{1}{V(\Gamma)}} > \delta}\right)\to 0
$$
as $t\to\infty$. Hence, the measure of this set will be less than $m_G(\mathcal{O}_\vre\Gamma)$
for large $t$. Then there exists $h_t\in\mathcal{O}_\vre$ such that
$$
\abs{
\frac{1}{m_G(B^+_t(\vre))}\int_{B^+_{t}(\vre)}
\phi_\vre(g^{-1}h_t\Gamma)\,dm_G(g) - \frac{1}{V(\Gamma)}} \le  \delta.
$$
Combining this estimate with Lemma \ref{comparison} and \eqref{eq:bbbb}, we obtain that
$$
|\Gamma\cap B_t|\le \left( \frac{1}{V(\Gamma)} +\delta\right)m_G(B^+_{t}(\vre))\le  \left(\frac{1}{V(\Gamma)}+\delta\right)(1+\delta) m_G(B_{t})
$$
for all $\delta>0$ and $t\ge t_1(\delta)$.
Since one can similarly prove the lower estimate, this completes the proof.
\end{proof}

\begin{proof}[Proof of Corollary \ref{cor:shift0}]
We apply both parts of the proof of Theorem \ref{main term} to the function in $L^2(G/\Gamma)$ given by 
\begin{equation}\label{eq:phiy}
\phi_\vre^y(gx\Gamma)=\sum_{\gamma\in \Gamma} \chi_{\vre}(gx \gamma y^{-1}).
\end{equation}
\end{proof}

\begin{proof}[Proof of Theorem \ref{error estimate}]
Recall that we are now assuming that the family $\{B_t\}$ is H\"older well-rounded:
$$
m_G(B_t^+(\vre))\le (1+c\vre^a)m_G(B_t^-(\vre))\quad\hbox{for every $t>t_1$ and $\vre\in (0,\vre_1)$,}
$$
the neighbourhoods $\mathcal{O}_\vre$ satisfy
\begin{equation}\label{eq:dim}
m_G(\mathcal{O}_\vre)\ge m_0\vre^\rho \quad\hbox{for some $m_0>0$ and every $\vre\in (0,\vre_1)$,}
\end{equation}
and the averages along the sets $B_t^\pm(\vre)$, $0<\vre<\vre_1$,
satisfy the stable quantitative mean ergodic 
theorem  with the error term $E(t)$. 

In the proof, we use a parameter $\vre>0$ satisfying
\begin{equation}\label{eq:vre}
\vre<\min\{\vre_0,\vre_1,c^{-1/a}\}
\end{equation}
where $\vre_0$ is such that the projection $\mathcal{O}_{\vre_0}^2\to \mathcal{O}_{\vre_0}^2\Gamma$
is injective. Since the neighbourhoods $\mathcal{O}_\vre$ are symmetric,
\begin{equation}\label{eq:m0}
\mathcal{O}_{\vre_0}\gamma_1\cap \mathcal{O}_{\vre_0}\gamma_2=\emptyset\quad
\hbox{for $\gamma_1\ne \gamma_2\in\Gamma$,}
\end{equation}
and
\begin{equation}\label{eq:m1}
m_{G/\Gamma}(\mathcal{O}_\vre\Gamma)=\frac{m_G(\mathcal{O}_\vre)}{V(\Gamma)}.
\end{equation}
Also, we have
\begin{equation}\label{eq:m2}
V(\Gamma)\ge m(\mathcal{O}_{\vre_0}).
\end{equation}

For any family of Haar-uniform averages $b_t$ satisfying the quantitative 
mean ergodic theorem with the error term $E(t)$
for its action on the probability space 
$(G/\Gamma,m_{G/\Gamma})$, we have for all $t > 0$,
$$\norm{\pi_{G/\Gamma}(b_t)\phi_\vre -
\int_{G/\Gamma}\phi_\vre\,dm_{G/\Gamma}}_{L^2(G/\Gamma)}
\le   E(t)\norm{\phi_\vre}_{L^2(G/\Gamma)},$$
and thus for all $\delta,t >0 $, 
$$m_{G/\Gamma}\left(\set{h\Gamma\,:\, \abs{\pi_{G/\Gamma}
(b_t)\phi_\vre(h\Gamma) -\frac{1}{V(\Gamma)}} > \delta}\right)\le 
\delta^{-2} E(t)^2
\norm{\phi_\vre}^2_{L^2(G/\Gamma)}.$$
It follows from \eqref{eq:m0} that
\begin{align*}
\norm{\phi_\vre}^2_{L^2({G/\Gamma})}&=\int_{G/\Gamma}
\phi_\vre(h\Gamma)^2 \frac{d\tilde{m}_{G/\Gamma}(h\Gamma)}{V(\Gamma)}\\
&= \int_G \chi_\vre^2(g)\frac{dm_G(g)}{V(\Gamma)}=
\frac{m_G(\mathcal{O}_\vre)^{-1}}{V(\Gamma)}.
\end{align*}
Hence,
\begin{equation}\label{eq:measure}
m_{G/\Gamma}\left(\set{h\Gamma\,:\, \left|\pi_{G/\Gamma}
(b_t)\phi_\vre(h\Gamma) -\frac{1}{V(\Gamma)}\right| > \delta}\right)\le 
\frac{m_G(\mathcal{O}_\vre)^{-1}}{ V(\Gamma)}
  \delta^{-2}  E(t)^2\,\,.
\end{equation}
This shows that the measure of the latter set 
decays with $t$.
In particular, the measure will eventually 
be strictly smaller than 
$m_{G/\Gamma}(\mathcal{O}_\vre\Gamma)=m_G(\mathcal{O}_\vre)/V(\Gamma)$ for sufficiently large $t$.
Then
\begin{equation}\label{eq:intersection}
\mathcal{O}_\vre\Gamma\cap 
 \set{h\Gamma\,:\, \abs{\pi_{G/\Gamma}
(b_t)\phi_\vre(h\Gamma) -\frac{1}{V(\Gamma)}} \le \delta}
\neq \emptyset.
\end{equation}
Thus according to (\ref{eq:G_Gamma-1})
 and (\ref{eq:G_Gamma-2}) applied to the sets $B_t^+(\vre)$, 
for any $h$ in the non-empty intersection 
 (\ref{eq:intersection}),
\begin{equation}
 \frac{1}{m_G(B_t^+(\vre))}
\int_{B_t^+(\vre)}\phi_\vre(g^{-1}h\Gamma)dm_G(g)\le 
\frac{1}{V(\Gamma)}+\delta.
\end{equation}
On the other hand, by Lemma \ref{comparison},  for $h\in \mathcal{O}_\vre$, 
\begin{equation}\label{eq:admissibility}
\abs{\Gamma\cap B_t}\le 
\int_{B_t^+(\vre)}\phi_\vre(g^{-1}h\Gamma)dm_G(g)\,\,.
\end{equation}
Combining these estimates and using the fact that the family $\{B_t\}$
is H\"older well-rounded, we conclude that 
\begin{equation}\label{eq:est}
\abs{\Gamma\cap B_t}\le
 \left(\frac{1}{V(\Gamma)}+\delta\right)m_G(B_t^+(\vre))\le 
\left(\frac{1}{V(\Gamma)}+\delta\right)(1+c\vre^a)m_G(B_t).
\end{equation} 
This inequality holds as soon as  (\ref{eq:intersection}) holds, 
and so certainly if we have 
\begin{equation}\label{eq:neq}
\frac{m_G(\mathcal{O}_\vre)^{-1}}{V(\Gamma)}
 \delta^{-2}  E(t)^2\le 
\frac{1}{4} \cdot\frac{m_G(\mathcal{O}_\vre)}{V(\Gamma)}.
\end{equation}
Indeed, then the right hand side  is strictly 
smaller than $m_{G/\Gamma}
(\mathcal{O}_\vre\Gamma)$, so that the intersection 
(\ref{eq:intersection}) is necessarily non-empty. 
We set $\delta=2 m_G(\mathcal{O}_\vre)^{-1} E(t)$ so that the equality in
(\ref{eq:neq}) holds.  Now using (\ref{eq:est}), the estimate 
$c\vre^a<1$, \eqref{eq:m2}, and (\ref{eq:dim}),
we deduce that
\begin{align*}
\frac{\abs{\Gamma\cap B_t}}{m_G(B_t)}-\frac{1}{V(\Gamma)} &\le
 2\delta+ \frac{c\vre^a}{V(\Gamma)}\le 4 m_G(\mathcal{O}_\vre)^{-1} E(t)+
\frac{c\vre^a}{m_G(\mathcal{O}_{\vre_0})}\\
&\le 4 m_0^{-1}\vre^{-\rho} E(t)+
\frac{c\vre^a}{m_G(\mathcal{O}_{\vre_0})}.
\end{align*}
To optimise the error term, we choose
$$
\vre=\left(4 m_0^{-1}c^{-1}m_G(\mathcal{O}_{\vre_0}) E(t)\right)^{1/(\rho+a)}.
$$
Note that since $E(t)\to 0$ as $t\to\infty$, there exists $t_0>0$ such that $\vre$ satisfies
(\ref{eq:vre}) for all $t\ge t_0$. Finally, we obtain that for $t\ge t_0$,
$$
\frac{\abs{\Gamma\cap B_t}}{m_G(B_t)}-\frac{1}{V(\Gamma)}\le A\, E(t)^{a/(\rho+a)}
$$
where
$A=(4 m_0^{-1})^{a/(\rho+a)} (c m_G(\mathcal{O}_{\vre_0})^{-1})^{\rho/(\rho+a)}.$

Note that both the comparison 
argument in Lemma \ref{comparison}, as well 
the estimate (\ref{eq:G_Gamma-2}) 
derived from the mean ergodic theorem,  
give a lower bound in addition to the foregoing 
upper bound. Thus the same arguments 
can be repeated to yield also a 
lower bound for the lattice points count.
This completes the proof of Theorem \ref{error estimate}.
\end{proof}

\begin{proof}[Proof of Corollary \ref{cor:shift}]

\ignore{
Let us introduce the following notation : 
\begin{align*}
&\tilde B_t=xB_t y^{-1},\quad \mathcal{U}_\vre=x\mathcal{O}_\vre x^{-1},\quad
\mathcal{V}_\vre=y\mathcal{O}_\vre y^{-1},\\
&\tilde B_t^+(\vre) = x B^+_t(\vre)y^{-1}=\mathcal{U}_\vre \tilde B_t \mathcal{V}_\vre,\\
&\tilde B_t^-(\vre) =xB^-_t(\vre)y^{-1}=\cap_{u\in \mathcal{U}_\vre,
  v\in\mathcal{V}_\vre} u \tilde B_t v.
\end{align*}
We need to compute the asymptotic of $|\Gamma\cap \tilde B_t|$.

For every $x,y\in G$, the averages along the sets $\tilde B_t$ satisfy the stable quantitative
mean ergodic theorem with the rate $E(t)$, and  the following H\"older well-roundedness property holds:
$$
m_G(\tilde B^+_t(\vre))\le (1+c\vre^a) m_G(\tilde B^-_t(\vre)).
$$

\todo{Why is this true for general lcsc groups, for either property, namely well-roundedness or stability of the mean theorem  on $xB_t y$???   Note that the argument in the remark below may allow us to avoid proving well-roundedness, and the stable mean theorem for $xB_t y$ ???}

We apply the argument of the proof of Theorem \ref{error estimate} with the function
$\chi_\vre=\frac{\chi_{\mathcal{V}_\vre}}{m_G(\mathcal{V}_\vre)}$ and the neighbourhood
$\mathcal{U}_\vre\Gamma\subset G/\Gamma$. Since $\Omega$ is compact,
there exists $\vre_0'>0$, depending on $\Omega$,
such that the $\Gamma$-translates of $\mathcal{U}_{\vre'_0}$ and 
$\mathcal{V}_{\vre'_0}$ are disjoint for all $x, y\in\Omega$.
We need to arrange that the parameter $\vre$ satisfies $\vre<\vre_0'$, which is possible by taking $t$
sufficiently large, and now the proof of Theorem \ref{error estimate} applies. 
}

As in Lemma \ref{comparison}, the quantity $|B_t\cap x\Gamma y^{-1}|$ can be estimated by integrating 
 the function $\phi_\vre^y$, defined in (\ref{eq:phiy}), on small perturbations of $B_t$. Indeed, for  $\vre<\vre_0(y,\Gamma)$ 
we have  
\begin{align*}
&\mathcal{O}_\vre y\gamma_1\cap \mathcal{O}_\vre y\gamma_2=\emptyset
\quad\quad \hbox{for $\gamma_1\ne \gamma_2\in\Gamma$.}
\end{align*}
Then the supports of the functions $g\mapsto\chi_{\vre}(g x\gamma y^{-1})$, $\gamma\in\Gamma$, are
disjoint, and we deduce that
\begin{align*}
\norm{\phi_\vre^y}^2_{L^2({G/\Gamma})}=\frac{m_G(\mathcal{O}_\vre)^{-1}}{V(\Gamma)}
\end{align*}
as before. Also, 
$$
m_{G/\Gamma}(\mathcal{O}_\vre x\Gamma)=\frac{m_{G}(\mathcal{O}_\vre)}{V(\Gamma)}\quad\hbox{and}\quad
V(\Gamma)\ge m_{G}(\mathcal{O}_{\vre_0}).
$$
Using this estimates, the proof proceeds exactly as in Theorem \ref{error estimate}. 
\end{proof}

\begin{proof}[Proof of Corollary \ref{c:compact}]
Clearly, when all $\Gamma_j$ are subgroups of a fixed lattice $\Gamma_0$, if $\mathcal{O}^2_{\vre_0}$
injects into $G/\Gamma_0$, it also injects into $G/\Gamma_j$. 
Since we assume that the operators $\pi_{G/\Gamma_j}(\beta_t)$ satisfy the stable quantitative mean
ergodic theorem, with the same rate $E(t)$ for all $j$, 
the result follows. 
\end{proof}

\section{Lattice point counting problems : general recipe and an example} \label{sec:ex}
\subsection{General recipe}
In the following sections we will give several applications of the general 
lattice point counting result, namely Theorem \ref{error estimate}. These applications are based on the following  recipe:
if $G$ is an lcsc group $G$, $\Gamma$ a discrete lattice in $G$, and $B_t$ a family of sets 
for which we wish to find the asymptotic of the 
number of lattice points together with 
an error term, Theorem \ref{error estimate} reduces 
the problem to the following two steps:
\begin{enumerate}
\item Establish that 
$\norm{\pi_{G/\Gamma}( \beta_t)f-\int_{G/\Gamma}fdm_{G/\Gamma}}_2\le 
E(t)\norm{f}_2$ for some decaying function $E(t)$,
where $ \beta_t$ denotes the Haar-uniform averages supported on $B_t$ (or ``small perturbations'' thereof).  

\item Establish that the family of sets $B_t$ is H\"older well-rounded  w.r.t. to a local 
neighbourhood family $\mathcal{O}_\vre$ which has finite upper local dimension. 
\end{enumerate} 

The first step is of spectral nature and requires some information  
regarding the unitary representation theory of $G$, and more 
specifically, the spectrum of $L^2(G/\Gamma)$. 
The second step is geometric, and involves the structure of a neighbourhood family 
$\mathcal{O}_\vre$ in $G$ and the regularity of the sets $B_t$ under small perturbations.

In order to carry out the first step, it will be convenient to use the notion of 
an $L^p$-representation.
\begin{definition}\label{lp rep}{\bf $L^p$-representation}. 
A unitary representation $\pi:G\to U(\mathcal{H})$ of an lcsc group $G$
is called $L^p$ if for vectors $v$, $w$ in some dense subspace of $\mathcal{H}$, the matrix coefficient
$\left<\pi(g)v,w\right>$ is in $L^p(G)$. We also say that the representation is $L^{p+}$
if the above matrix coefficients are in $L^{p+\vre}(G)$ for every $\vre>0$. The least $p$ with this property is denoted by $p^+(\pi)$. 
\end{definition}
The following parameter will be used to control the rate of decay in the mean ergodic theorem
and in the asymptotic of the number of lattice points:
\begin{align}\label{eq:kappa}
n_e(p) &= \left\{
\begin{tabular}{l}
\hbox{ the least even integer greater than or equal to $p/2$, if $p>2$,}\\
\hbox{ 1, if $p=2$}.
\end{tabular} \right.
\end{align}

\subsection{A motivating example : lattice points in plane sectors}
To illustrate the two ingredients of our approach geometrically, let us consider the example of $G=\hbox{SL}_2(\RR)$ acting by isometries on the
hyperbolic plane $\mathbb{H}^2$ (of constant curvature $-1$) and a lattice $\Gamma$ in $G$.
Fix a point $o\in \HH^2$ and consider a sector with vertex $o$, namely,
the region between two infinite geodesic
rays starting at $o$ at an angle $\psi > 0$. We let $S_t(\psi)$ denote the  sector intersected with the disc of (hyperbolic) radius $t$ centered at $o$, and proceed to verify the two conditions stated in the recipe. 

 {\it 1) Regularity under perturbation, and geometric comparison argument}. 
It is evident that the uniform probability measure $\sigma_t(\psi)$ supported on $S_t(\psi)$ is dominated
by $\frac{2\pi}{\psi}\beta^\prime_t$, where $\beta^\prime_t$ is the normalised uniform (hyperbolic) measure 
on a disc of radius $t$ with center $o$. 

Consider now a Cartan polar coordinate decomposition,  $G=KA^+K$, where
$K=\hbox{SO}_2(\RR)=\set{k_\phi\,:\, 0\le \phi < 2\pi}$ and $A^+=\{\hbox{diag}(e^{s/2},e^{-s/2}):\,
s\ge 0\}$.
Then the sets $S_t(\psi)$ are given by the coordinates 
$$S_t(\psi)=\set{k_\phi a_s k_{\phi^\prime}\,:\, 0\le \phi < 2\pi,\, 0\le s < t,\, 0\le \phi^\prime \le \psi}$$ 
and $S_t(\psi)$ are indeed Lipschitz well-rounded. To see that, first recall the (hyperbolic) cosine formula for triangles in the hyperbolic plane 
$$\cosh c=\cosh a\cosh b -\sinh a \sinh b \cos \phi\,.$$
In terms of the Cartan polar coordinates decomposition this formula translates to the fact that $a_tk_\phi a_{s}=k_1a_r k_2$ has Cartan component $a_r$ where $\cosh r =\cosh a\cosh b +\sinh a \sinh b \cos \phi$
 (see e.g. \cite[\S 2.2]{N4}). This immediately implies Lipschitz control of the radial part $a_r$ in the Cartan decomposition under small perturbations. For the angular part in the Cartan decomposition one need only consider the representation on $\RR^2$ and estimate ($e_1$, $e_2$ 
being the standard basis)
$$0 < \inn{a_tk_\phi a_{s}e_1,e_1}=e^{t/2}e^{s/2}\inn{k_\phi e_1,e_1}=\inn{a_r k_2 e_1,k_1^{-1}e_1}$$
$$\le \norm{a_r k_2 e_1}\le \norm{a_r}=\norm{a_tk_\phi a_s}\le e^{t/2} e^{s/2}\,\,.$$
Hence if $\inn{k_\phi e_1,e_1}\ge 1-\epsilon$ then the norm of the vector $a_r k_2e_1$ is at least $ (1-\epsilon)\norm{a_r}$ so that $k_2e_1$ must be close to $e_1$. It follows that 
$\inn{k_2e_1,e_1}\ge (1-C\epsilon)$, and similarly for $k_1$.  Hence the angles of rotation defining the
Cartan components $k_1$ and $k_2$ are close to zero, and the Cartan components depend in a Lipschitz
fashion on the perturbation (see Proposition \ref{wave front} for a general argument). 

{\it 2) Spectral estimate}. 
It is well known that for any lattice $\Gamma$ in $G=\hbox{SL}_2(\RR)$, $\pi_{G/\Gamma}^0$ is an $L^{p+}$-representation, where $p^+=p^+(\Gamma)$. It follows from the spectral transfer principle \cite{N1}(see Section \ref{sec:s-arith} below for a full discussion of the following arguments) that  
\begin{align*}
\norm{\pi_{G/\Gamma}^0(\sigma_t(\psi))} &\le
\left(\frac{2\pi}{\psi}\norm{\lambda_G(\beta_t)}\right)^{1/n_e(p)}=\left(\frac{2\pi}{\psi}\Xi_G(t)\right)^{1/n_e(p)}\\
&=O_\eta\left(\psi^{-1/n_e(p)}e^{-((2n_e(p))^{-1}-\eta)t}\right),\quad \eta>0,
\end{align*}
where $\lambda_G$ denotes the regular representation, and $\Xi_G$ is the Harish-Chandra function
(see e.g. \cite[Section 3.1 and Theorem 3.2.1]{HT}). The same argument shows that $S_t(\psi)$ satisfy the stable quantitative mean ergodic theorem, so that Theorem \ref{error estimate} applies and produces 
the error term stated there. 


More generally, both the stable quantitative mean ergodic theorem and Lipschitz well-roundedness hold for spherical caps in hyperbolic spaces of arbitrary dimension and lead to the following result. 

\begin{theorem}\label{hyp lat}{\bf Counting points in sectors in hyperbolic space.}
Let $\HH^m$ denote hyperbolic $m$-space (of constant curvature $-1$) and $S_t(\psi)$ a spherical cap with cone angle $\psi$ (namely intercepting a fraction given by $\psi$ of the area of the unit sphere).
Let $\Gamma$ be any lattice subgroup in $G=\hbox{\rm SO}^0(m,1)$
such that $\pi_{G/\Gamma}^0$ is an $L^{p+}$-representation. 
Then the number of lattice points in the spherical cap obeys
\begin{align*}
\abs{ \{\gamma\in\Gamma:\, \gamma o\in S_t(\psi)\} }=&\frac{v_m}{\vol(\Gamma\backslash \HH^m)}\psi
e^{(m-1)t}\\
&+O_\eta\left(\psi^{-1/n_e(p)}
e^{(m-1)\left(1-\frac{n_e(p)^{-1}}{m(m+1)+2}+\eta\right)t}\right),\quad\eta>0,
\end{align*}
with $v_m>0$ depending only on the dimension $m$, and the implied constant depending only 
on $m$, $p$, $\vre_0(e,\Gamma)$. 
(We assume here that only the identity in $\Gamma$ stabilises $o$, otherwise the main term should be
divided by the size of the stabiliser). 

\end{theorem}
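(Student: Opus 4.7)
The plan is to apply Theorem \ref{error estimate} to the family
$B_t(\psi)=\{g\in G: go\in S_t(\psi)\}\subset G$,
so that $|\Gamma\cap B_t(\psi)|$ coincides with the target count
$|\{\gamma\in\Gamma: \gamma o\in S_t(\psi)\}|$ under the standing assumption that
the $\Gamma$-stabiliser of $o$ is trivial. Following the recipe of Section \ref{sec:ex},
the task reduces to two ingredients:
(i) H\"older well-roundedness of $\{B_t(\psi)\}_{t>0}$ with respect to a neighbourhood family
$\mathcal{O}_\vre$ of Riemannian $\vre$-balls about the identity in $G$, which has
upper local dimension $\rho = \dim G = m(m+1)/2$; and
(ii) the stable quantitative mean ergodic theorem for the Haar-uniform averages $\sigma_t(\psi)$
supported on $B_t(\psi)$.

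For step (i), I would use the Cartan decomposition $G = KA^+K$ with $K = \hbox{SO}(m)$
the stabiliser of $o$, under which
$B_t(\psi) = K_\psi \cdot A^+_{[0,t]} \cdot K$,
where $K_\psi \subset K$ is the set of rotations sending a reference direction into the cap.
The geometric analysis sketched in the planar motivating example ($m = 2$) --- based on the
hyperbolic cosine rule and a representation-theoretic bound controlling the angular Cartan
components (cf.~Proposition \ref{wave front}) --- extends to arbitrary $m$ and shows that
multiplication by $\mathcal{O}_\vre$ on either side shifts the radial parameter $s$ by $O(\vre)$
and enlarges $K_\psi$ only by an $\vre$-neighbourhood in $K$. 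Since the boundary of $K_\psi$
in $K$ has codimension one, the induced relative volume change is bounded by $c(\psi)\vre$,
establishing H\"older well-roundedness with exponent $a = 1$ and an explicit constant $c(\psi)$
depending on $\psi$ through the measure of the $\vre$-neighbourhood of the cap boundary.

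For step (ii), the key observation, as in the planar example, is the pointwise domination
$\sigma_t(\psi) \le (c_m/\psi)\beta_t$, where $\beta_t$ is the normalised Haar measure on the
full bi-$K$-invariant ball $KA^+_{[0,t]}K$ of radius $t$. Since $\pi^0_{G/\Gamma}$ is
$L^{p+}$, the spectral transfer principle (cf.~Section \ref{sec:s-arith}) yields
$$
\norm{\pi_{G/\Gamma}^0(\sigma_t(\psi))}
\le \left(\frac{c_m}{\psi}\norm{\lambda_G(\beta_t)}\right)^{1/n_e(p)},
$$
and the rank-one Harish-Chandra/Kunze-Stein estimate
$\norm{\lambda_G(\beta_t)} \le \Xi_G(t) \le C_\eta e^{-((m-1)/2 - \eta)t}$ yields
$E(t) = C_\eta\,\psi^{-1/n_e(p)}\,e^{-(m-1)(1/(2 n_e(p)) - \eta)t}$.
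Stability is automatic, since the perturbed sets $B_t(\psi)^{\pm}(\vre)$ are sectors of the
same type with $t$ and $\psi$ shifted only by $O(\vre)$ and hence obey the same estimate
uniformly in $\vre$.

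Plugging these inputs into Theorem \ref{error estimate}, the main term
$m_G(B_t(\psi))/V(\Gamma) = v_m\,\psi\,e^{(m-1)t}/\vol(\Gamma\backslash\HH^m)\cdot(1+o(1))$
reproduces the stated leading asymptotic, while the error
$A\cdot E(t)^{a/(\rho + a)}\cdot m_G(B_t(\psi))$ with
$a/(\rho+a) = 2/(m(m+1)+2)$ yields, after carrying the $\psi$-dependence of
$A\asymp c(\psi)^{\rho/(\rho+a)}$ through the bookkeeping, the claimed error
$O_\eta(\psi^{-1/n_e(p)}\,e^{(m-1)(1 - 1/(n_e(p)(m(m+1)+2)) + \eta)t})$.
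The main obstacle is step (i): extending the planar wavefront/cosine-rule analysis to
arbitrary $m$ with sufficient quantitative control on $c(\psi)$ so that, when meshed with
the $\psi^{-1/n_e(p)}$ factor in $E(t)$ via the exponent $\rho/(\rho+a)$, one recovers
precisely the $\psi$-dependence stated in the theorem.
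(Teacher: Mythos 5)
Your proposal follows essentially the same route as the paper's own proof, which is given as the motivating example in Section~\ref{sec:ex}: apply Theorem~\ref{error estimate} to the family $B_t(\psi)=K_\psi A^+_{[0,t]}K$ using, on the one hand, the pointwise domination $\sigma_t(\psi)\le (c_m/\psi)\beta_t$ combined with the spectral transfer principle (Theorem~\ref{semisimple mean}) and the rank-one Kunze--Stein bound $\norm{\lambda_G(\beta_t)}\ll_\eta e^{-(m-1)(1/2-\eta)t}$, and on the other hand Lipschitz well-roundedness coming from the effective Cartan decomposition (the quantitative wavefront lemma, which in the paper is Proposition~\ref{wave front} and specialised for $m=2$ in Section~\ref{sec:ex}). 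The concern you flag at the end is resolvable: the boundary of a cap of fractional area $\psi$ in $S^{m-1}$ has surface measure $\asymp\psi^{(m-2)/(m-1)}$, so the Lipschitz well-roundedness constant satisfies $c(\psi)\ll\psi^{-1/(m-1)}$, and a direct check shows that $m_G(B_t(\psi))\cdot c(\psi)^{\rho/(\rho+1)}\cdot E(t)^{1/(\rho+1)}$ with $\rho=m(m+1)/2$ is bounded by $\psi^{-1/n_e(p)} e^{(m-1)(1-n_e(p)^{-1}/(m(m+1)+2)+\eta)t}$ for all $\psi\in(0,1]$, matching the stated error. One minor imprecision: the perturbed sets $B_t(\psi)^\pm(\vre)$ are not literally sectors, only sandwiched between sectors of nearby parameters via the wavefront lemma; this is nevertheless enough for stability, since the operator-norm bound of Theorem~\ref{semisimple mean} depends only on $m_G(B_t^\pm(\vre))$, which is comparable to $m_G(B_t(\psi))$ uniformly in $\vre$ by the very well-roundedness just established.
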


We note that the sectors $S_t(\psi)$ constitute a Lipschitz well-rounded family but it is obviously not an  admissible family according to the Definition \ref{admissible}, so that the counting results in \cite{GN} do not directly apply.  

We refer to \cite{b,mar,ni,sh} for other results on the angular distribution of lattice points
in hyperbolic spaces.  In particular, in the special case when  $m=2$ and $\Gamma$ is a principal 
congruence subgroup of $SL_2(\ZZ)$, an error term in this counting problem was derived by Boca in \cite{b}, who has also raised the question of which lattices in $SL_2(\RR)$ satisfy a similar property. Note that Theorem \ref{hyp lat} applies to general lattice subgroups in hyperbolic spaces, and for tempered lattices $\Gamma\subset SL_2(\ZZ)$ (i.e., when $\pi_{G/\Gamma}^0$ is
$L^{2+}$), the error estimate coincides with the one obtained in \cite{b}.

\section{Lattice points on semisimple $S$-algebraic groups}\label{sec:s-arith}

\subsection{Notation}\label{sec:not}

Let us now set the following notation regarding local fields, algebraic groups and adeles, which will be 
used throughout the rest of the paper.

 Given an algebraic number field $F$, 
we denote by $V$ the set of equivalence classes of valuations of $F$.
The set $V$ is the disjoint union 
$V=V_f\coprod V_\infty$ of the set $V_f$ consisting 
of non-Archimedean valuations
and the set $V_\infty$ consisting of Archimedean 
valuations. More generally, for $S\subset V$, we also have
the decomposition $S=S_f\coprod S_\infty$.
For any place $v\in V$, let $F_v$ denote the completion 
of $F$ w.r.t.  the valuation $v$. Let $\mathcal{O}$ denote the ring of 
integers in $F$, and for finite $v$,
let $\mathcal{O}_v$ be its completion, namely 
$\mathcal{O}_v=\set{x\in F_v\,:\, v(x)\ge 0}$.
We denote by $\mathfrak{p}_v$ the
maximal ideal in $\mathcal{O}_v$, by $f_v=\mathcal{O}_v/\mathfrak{p}_v$ the residue field,
and set $q_v=|f_v|$. As usual, the valuation is normalized by $\abs{s_v}_v=\frac{1}{q_v}$, $s_v$ a uniformizer of $\mathcal{O}_v$. 

We introduce local heights $H_v$. For an Archimedean local field $F_v$, and for $x=(x_1,\dots,x_d)\in F_v^d$ we set 
\begin{equation}\label{eq:H1}
H_v(x)=\left(|x_1|^2_v+\cdots + |x_d|^2_v \right)^{1/2},
\end{equation}
and for a non-Archimedean local field $F_v$, 
\begin{equation}\label{eq:H2}
H_v(x)=\max\{|x_1|_v,\ldots,|x_d|_v\}.
\end{equation}

Let $F_v$, $v\in S$, be a finite family of (nondiscrete) local fields,
and $G_v$, $v\in S$, be the $F_v$-points of a semisimple algebraic
group ${\sf G}_v$ defined over $F_v$. Let $\Gamma$ be a lattice in the group $G=\prod_{v\in S} G_v$.
We fix representations $\rho_v:G_v\to \hbox{GL}_{m_v}(F_v)$, $v\in S$, with finite kernels and index the lattice points according to the height $H$ defined by
\begin{equation}\label{eq:H}
H(g)=\prod_{v\in S} H_v(\rho_v(g_v)),\quad g=(g_v)\in G,
\end{equation}
where $H_v$'s are the local heights defined above.
We set 
\begin{align*}
B_T=\{g\in G:\, H(g)\le T\}.
\end{align*}
 
 \subsection{Counting $S$-integral points}

We can now state our solution to the problem of counting $S$-arithmetic lattice points in $S$-algebraic groups (and more general lattices in the product).  

\begin{theorem}\label{th:S-arith}{\bf Counting lattice points in $S$-algebraic groups: height balls.}
Keeping the notation in the previous subsection, assume that the groups ${\sf G}_v$ are simply connected, and at least one of ${\sf G}_v$'s
is isotropic over $F_v$ (or equivalently, $G$ is noncompact).
Fix $\vre_0>0$.
Then there exists $T_0>0$ such that for every lattice $\Gamma$ in $G$
for which the representation $L_0^2(G/\Gamma)$ is $L^{p+}$, $x,y\in G$
such that $\vre_0(x,\Gamma),\vre_0(y,\Gamma)\ge \vre_0$, and $T\ge T_0$,
\begin{align*}
|x\Gamma y^{-1}\cap
B_T|&=\frac{\vol(B_T)}{\vol(G/\Gamma)}+O_\eta\left(\vre_0^{-d^2/(a+d)}\,\vol(B_T)^{1-(2n_e(p))^{-1}
      a/(a+d)+\eta}\right)
\end{align*}
for every $\eta>0$,
where
$d =\sum_{v\in S_\infty} \dim(G_v)$ and $a$ is the H\"older exponent of the family $\{B_{e^t}\}$.
\end{theorem}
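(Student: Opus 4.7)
The plan is to apply Corollary \ref{cor:shift} to the family $B_T = B_{e^t}$ with $t = \log T$. This reduces the theorem to verifying the two ingredients of the recipe from Section \ref{sec:ex}: a stable quantitative mean ergodic theorem in $L^2(G/\Gamma)$ with an explicit rate $E(t)$, and H\"older well-roundedness with explicit upper local dimension.

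For the geometric side, take as neighbourhood basis $\mathcal{O}_\vre = \prod_{v\in S_\infty}\mathcal{O}_\vre^v \times \prod_{v\in S_f} W_v$, where $\mathcal{O}_\vre^v \subset G_v$ are Riemannian balls in the Archimedean factors and $W_v$ are fixed compact open subgroups in the non-Archimedean factors. As the example following Definition \ref{def:local} shows, this family has upper local dimension $\rho = \sum_{v\in S_\infty}\dim(G_v) = d$, with $m_G(\mathcal{O}_\vre)\ge m_0 \vre^d$. H\"older well-roundedness of $\{B_{e^t}\}$ with exponent $a$ is assumed through the definition of $a$ in the statement.

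For the spectral side, the key step is to establish the stable quantitative mean ergodic theorem with rate
\[
E(t)\le C_\eta\, m_G(B_{e^t})^{-1/(2n_e(p)) + \eta}\qquad\text{for every }\eta>0.
\]
Since $L^2_0(G/\Gamma)$ is an $L^{p+}$-representation, the spectral transfer principle yields $\|\pi^0_{G/\Gamma}(\beta_t)\|\le \|\lambda_G(\beta_t)\|^{1/n_e(p)}$. The Kunze-Stein phenomenon, lifted to the semisimple $S$-algebraic setting and applied to the simply connected isotropic group $G$, gives $\|\lambda_G(\beta_t)\|\le C_\eta\, m_G(B_{e^t})^{-1/2+\eta}$. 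Combining the two yields the claimed rate. Stability (uniformity of $E$ over $B_t^{\pm}(\vre)$ for $\vre$ in the allowed range) is automatic: the perturbed sets have volume comparable to $m_G(B_{e^t})$, and both the transfer principle and the Kunze-Stein estimate depend only on this volume (see the remark following Definition \ref{stable}).

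With both ingredients in place, Corollary \ref{cor:shift} applies to $x\Gamma y^{-1}$ and yields
\[
\left|\frac{|x\Gamma y^{-1}\cap B_T|}{m_G(B_T)} - \frac{1}{V(\Gamma)}\right|\le A\, E(\log T)^{a/(d+a)},
\]
with $A = (4 m_0^{-1})^{a/(d+a)}(c\, m_G(\mathcal{O}_{\vre_0})^{-1})^{d/(d+a)}$. Using $m_G(\mathcal{O}_{\vre_0})\ge m_0 \vre_0^d$ gives $A = O(\vre_0^{-d^2/(d+a)})$. Multiplying both sides by $m_G(B_T)$ and substituting the rate for $E$ produces the stated error bound. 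The main obstacle is the spectral step: verifying Kunze-Stein on the $S$-algebraic product $G=\prod_{v\in S} G_v$ with uniform constants over the non-Archimedean factors, and marrying it with an $S$-algebraic spectral transfer principle, so that one obtains a volume-controlled decay rate for $\|\lambda_G(\beta_t)\|$. This is precisely the content the paper develops in its treatment of mean ergodic theorems for $S$-algebraic and adelic groups, and once available the remaining argument is a direct substitution into Corollary \ref{cor:shift}.
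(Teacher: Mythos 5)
Your proposal takes essentially the same route as the paper: choose the product neighbourhood basis with upper local dimension $d$, invoke the spectral transfer principle together with the Kunze--Stein phenomenon (Theorem~\ref{semisimple mean}) to get the stable quantitative mean ergodic theorem with rate $m_G(B_{e^t})^{-1/(2n_e(p))+\eta}$, and then feed everything into Corollary~\ref{cor:shift}, tracking the constant $A$ and using $m_G(\mathcal{O}_{\vre_0})\ge m_0\vre_0^d$ to obtain the $\vre_0^{-d^2/(a+d)}$ dependence. The one place where you lean on the statement rather than prove it is the H\"older well-roundedness of $\{B_{e^t}\}$: the paper actually derives this from the submultiplicativity $H(gbh)\le H(g)H(b)H(h)$, which gives $B_T^{\pm}(\vre)\subset B_{(1+\vre)^{\pm 2|S_\infty|}T}$, combined with the uniform H\"older continuity of $t\mapsto\log\vol(B_{e^t})$ (cited from \cite[Theorem~7.19]{GN} and \cite{BO}); this verification is what makes the exponent $a$ in the statement non-vacuous, and for that purpose the paper also prefers height-defined Archimedean neighbourhoods $\mathcal{O}^v_\vre=\{g:H_v(\rho_v(g_v^{\pm1})-\mathrm{id})<\vre\}$ over Riemannian balls, though the two choices are comparable.
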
 


\begin{remark}\label{rem:un}
Let us note the following regarding Theorem \ref{th:S-arith}. 
\begin{enumerate}
\item 
The sets $B_{e^t}$ are always H\"older well-rounded for some $a>0$
and, in fact, H\"older admissible.
As is explained in the proof of Theorem \ref{th:S-arith}, this follows from \cite[Theorem 7.19]{GN} (see also \cite{BO}).
Moreover, if either $S_f=\emptyset$, or $S_\infty=\emptyset$,  or for $v\in S_\infty$,
the representation $\rho_v$ is self-adjoint (i.e., ${}^t\rho_v(G_v)=\rho_v(G_v))$,
we can take the H\"older exponent $a=1$ (see \cite[Theorem~3.15]{GN}).
\item 
The assumption that the representation $\pi_{G/\Gamma}^0$ in $L_0^2(G/\Gamma)$ is $L^{p+}$ for some  $p>0$ holds, in the set-up of $S$-algebraic groups, in most cases. See Remark \ref{r:lp} below for further discussion.

\item If ${\sf G}_v$'s are not simply connected, we consider the simply connected covers
$\pi:\tilde G\to G$. It is known that $\pi(\tilde G)$ is normal, and $G/\pi(\tilde G)$
is Abelian of finite exponent (see \cite{bt}). 
If $\Gamma$ is finitely generated, 
$\Gamma\cap \pi(\tilde G)$ has finite index in $\Gamma$.
Applying Theorem \ref{th:S-arith} to the lattice $\tilde\Gamma=\pi^{-1}(\Gamma\cap \pi(\tilde G))$
in $\tilde G$, one can deduce the asymptotics and the error term for $\Gamma$.

\item If the height $H$ is bi-invariant under a maximal compact subgroups $K_v$ of $G_v$ when $v\in V_\infty$,  we can improve the error estimate by taking $d =\sum_{v\in S_\infty} \dim(K_v\backslash G_v)$ (see also Remark \ref{rem:main}(2)).

\item 
A further improvement in the error term in Theorem \ref{th:S-arith} can be obtained 
if in addition the local heights are each bi-invariant under a special maximal
compact subgroup (so that the Cartan decomposition holds for $G$).
In this case, we can replace $2n_e(p)$ in the error estimate by $p$, provided the $L^{p^+}$-spectrum 
in uniformly bounded, in the sense defined in \cite[\S 8.1]{GN}. This is indeed often the case.  
\end{enumerate}
\end{remark}

Taking parts (4) and (5) of the last remark into account, we note that Theorem  \ref{th:S-arith} matches the best error estimate established for bi-$K$-invariant sets in simple higher rank Lie group, but holds in much greater generality. To elucidate this point, we recall the following : 

\begin{example}\label{ex:sl}
The most basic example of the non-Euclidean lattice point counting problem is that of integral unimodular matrices, and balls w.r.t. the Hilbert-Schmidt norm, which is the Archimedean valuation. In this case, taking parts (4) and (5) of the foregoing remark 
into account, we have $ d=\dim \hbox{SL}_m(\mathbb{R})/\hbox{SO}_m(\mathbb{R})=m(m+1)/2-1$,
the balls are Lipschitz so $a=1$, and the integrability parameter is $p^+=2(m-1)$ 
namely the representation in $L^2_0(\hbox{SL}_m(\mathbb{R})/\hbox{SL}_m(\mathbb{Z}))$ is
$L^{2(m-1)+}$ (see \cite{DRS}).  
Theorem \ref{th:S-arith} then implies : 

\begin{align*}
|\hbox{SL}_m(\mathbb{Z})\cap B_T|=&\frac{\vol(B_T)}{\vol(\hbox{SL}_m(\mathbb{R})/\hbox{SL}_m(\mathbb{Z}))}
 +O_\eta\left(\vol(B_T)^{1-1/(m^3-m)+\eta}\right), \quad \eta>0.
\end{align*}
The latter estimate coincides with the best current error term obtained in \cite{DRS} for this case. 
\end{example}


Another natural family of balls on an $S$-algebraic group $G=\prod_{v\in S} G_v$ is defined
with respect to standard $CAT(0)$ metrics on the corresponding symmetric spaces and buildings.
Let $X_v$ denote the symmetric space of $G_v$ if is Archimedean and the Bruhat--Tits
building of $G_v$ otherwise. For fixed $x=(x_v)\in \prod_{v\in S} X_v$, we set
$$
d(g)=\left(\sum_{v\in S} d_v(g x_v,x_v)^2\right)^{1/2}
$$
where $d_v$ are the standard metrics on $X_v$.
Let
\begin{equation}\label{eq:bb}
B_t=\{g\in G:\, d(g)\le t\}.
\end{equation}
Our method allows to deal with lattice subgroups of $G$ which are not necessarily irreducible.
This is related to fact that the sets $B_t$ are well-balanced (see \cite[Definition 3.17]{GN}).
Namely, the volume of the sets $B_t$ does not concentrate along 
proper direct factors of $G$ (see \cite[Theorem 3.18]{GN}). The error term in the lattice counting
problem can be estimated in terms the relative volume growth
$$
r=\max_{L<G}\limsup_{t\to\infty} \frac{\log m_L(B_t\cap L)}{\log m_G(B_t)}
$$
where the maximum is taken over proper direct factors $L$ of $G$.
If all the factors $G_v$ are not compact, then $r<1$ by \cite[Theorem 3.18]{GN}.

\begin{theorem}\label{th:S-arith2}{\bf Counting lattice points in $S$-algebraic groups: metric balls.}
Let $G$ be as in Theorem \ref{th:S-arith}, with all factors $G_v$ non-compact.
For every $\vre_0>0$ there exists $t_0>0$ such that for every lattice $\Gamma$ in $G$
for which the representation of $G_v$, $v\in S$, on the orthogonal complement of 
$L^2(G/\Gamma)^{G_v}$ is $L^{p+}$, $x,y\in G$
such that $\vre_0(x,\Gamma),\vre_0(y,\Gamma)\ge \vre_0$, and $t\ge t_0$,
\begin{align*}
|x\Gamma y^{-1}\cap
B_t|&=\frac{\vol(B_t)}{\vol(G/\Gamma)}+O_\eta\left(\vre_0^{-d^2/(1+d)}\,\vol(B_t)^{1-
(1-\sqrt{3 r^2+1}/2)/n_e(p)(1+d)+\eta}\right)
\end{align*}
for every $\eta>0$,
where
$d =\sum_{v\in S_\infty} \dim(X_v)$.
\end{theorem}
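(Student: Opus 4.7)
The plan is to reduce Theorem \ref{th:S-arith2} to Corollary \ref{cor:shift} (equivalently, to Theorem \ref{error estimate}) by verifying its hypotheses for the CAT(0) metric balls $B_t$ in \eqref{eq:bb}. For each $v \in S$, let $K_v \subset G_v$ denote the stabilizer of the basepoint $x_v$: a maximal compact subgroup when $v \in S_\infty$ and a compact open subgroup (for instance, a special maximal parahoric) when $v \in S_f$. Since $d(g) = d(kgk')$ for every $k, k' \in K = \prod_v K_v$, the sets $B_t$ are bi-$K$-invariant. I would take the neighbourhood basis
\[
\mathcal{O}_\vre = \prod_{v \in S_\infty} \mathcal{O}_\vre^{(v)} \times \prod_{v \in S_f} K_v,
\]
where $\mathcal{O}_\vre^{(v)}$ is the Riemannian $\vre$-ball about the identity in $G_v$. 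By Remark \ref{rem:main}(2) the effective upper local dimension is then $\rho = \sum_{v \in S_\infty} \dim(K_v \backslash G_v) = d$.

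Next I would verify Lipschitz well-roundedness, i.e., H\"older well-roundedness with exponent $a = 1$. For $u, w \in \mathcal{O}_\vre$ and $g \in G$, the triangle inequality in each CAT(0) space $X_v$ yields $|d(u g w) - d(g)| \le C\vre$ with $C$ independent of $g$, so $\mathcal{O}_\vre B_t \mathcal{O}_\vre \subset B_{t + C\vre}$ and $B_{t - C\vre} \subset \bigcap_{u, w \in \mathcal{O}_\vre} u B_t w$. Combined with the standard volume asymptotics on $K \backslash G / K$, which give $\vol(B_{t + \vre})/\vol(B_{t - \vre}) \le 1 + c\vre$ for large $t$, one obtains Lipschitz well-roundedness of the family.

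The crux is the stable quantitative mean ergodic theorem for $\beta_t$ acting on $L^2(G/\Gamma)$, with rate
\[
E(t) = O_\eta\bigl(\vol(B_t)^{-(1 - \sqrt{3r^2 + 1}/2)/n_e(p) + \eta}\bigr).
\]
My strategy is first to apply the spectral transfer principle to pass from $\pi^0_{G/\Gamma}(\beta_t)$ to the regular representation norm, raising the resulting bound to the power $1/n_e(p)$ dictated by the $L^{p+}$ hypothesis, and then to invoke Kunze-Stein type convolution inequalities on the semisimple $S$-algebraic group $G$ to control the regular-representation norm by a power of $\vol(B_t)^{-1}$. Crucially, the hypothesis that the action of each $G_v$ on the complement of $L^2(G/\Gamma)^{G_v}$ is $L^{p+}$ allows one to dispense with irreducibility of $\Gamma$ and apply the spectral gap factor-by-factor, while the well-balanced property of CAT(0) balls (\cite[Theorem 3.18]{GN}) ensures that no proper direct factor carries a disproportionate share of the mass of $\beta_t$. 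The parameter $r$ measures the residual imbalance, and optimizing over the decomposition of $L^2_0(G/\Gamma)$ according to factor invariants should produce the exponent $1 - \sqrt{3r^2 + 1}/2$. Inserting this $E(t)$ into Theorem \ref{error estimate} with $a = 1$ and $\rho = d$ yields an error of order $\vol(B_t)^{-(1 - \sqrt{3r^2+1}/2)/n_e(p)(1+d) + \eta}$, as claimed.

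The main obstacle is this final spectral step. Unlike the height balls of Theorem \ref{th:S-arith}, which factor as products of local height balls, the $\ell^2$-metric balls $B_t$ do not split across $S$, so one cannot simply multiply per-factor norm bounds. Instead one must exploit the balanced geometry of $B_t$ to interpolate between the factor-wise spectral gaps while quantifying how much the $\ell^2$-constraint forces mass away from any single proper subfactor; this interpolation is what produces the somewhat unusual constant $1 - \sqrt{3r^2 + 1}/2$, and matching it precisely is the delicate technical point of the proof.
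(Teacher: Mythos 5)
Your geometric reduction is sound and matches the paper: taking $\mathcal{O}_\vre$ with effective local dimension $\rho=d$ (via bi-$K$-invariance and Remark \ref{rem:main}(2)), deducing $\mathcal{O}_\vre B_t\mathcal{O}_\vre\subset B_{t+\vre}$ and $B_{t-\vre}\subset\cap_{u,w}uB_tw$ from the CAT(0) triangle inequality, and invoking the Lipschitz property of $t\mapsto\log m_G(B_t)$ from \cite[Theorem 3.18]{GN} to get Lipschitz well-roundedness, so that Corollary \ref{cor:shift} with $a=1$ does the rest. But the heart of the proof --- the stable quantitative mean ergodic theorem with rate $E(t)=O_\eta(m_G(B_t)^{-(1-\sqrt{3r^2+1}/2)/n_e(p)+\eta})$ --- is precisely what you leave unproved; you correctly diagnose that the $\ell^2$-balls do not split across $S$ and that the constant $1-\sqrt{3r^2+1}/2$ must come from some interplay between the per-factor spectral gap and the factor-wise volume growth, but you explicitly stop short of deriving it. Since all the other ingredients are routine, this missing piece is the theorem.

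What the paper actually does is the following. Decompose $L^2(X)=\bigoplus_{J\subset S}\mathcal{H}_J$, where $\mathcal{H}_J$ consists of vectors fixed by $G_J=\prod_{j\in J}G_j$ and with no nonzero $G_j$-fixed vectors for $j\notin J$; the hypothesis on the complements of $L^2(G/\Gamma)^{G_v}$ ensures the representation of $G^J=\prod_{j\notin J}G_j$ on $\mathcal{H}_J$ is $L^{p+}$. For $f\in\mathcal{H}_J$, one integrates out the $G_J$-fibers of $B_t$ and writes $\pi^0_{G/\Gamma}(\beta_t)f=\beta_t^J*f$ where $\beta_t^J(g)=m_G(B_t)^{-1}m_{G_J}(B_{J,\sqrt{t^2-d(g)^2}})\chi_{B_t^J}(g)$ is a probability density on $G^J$. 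Theorem \ref{semisimple mean} then reduces the problem to estimating $\|\beta_t^J\|_{L^q(G^J)}$ for $q\in[1,2)$. The $q$-th power of this norm is an integral over $u\in[0,1]$ (the radial coordinate on $G^J$) of $m_{G_J}(B_{J,t\sqrt{1-u^2}})^q\cdot m_{tu}^J(S^J_{tu})$, which up to $\eta$-losses is $\exp(\phi(u)t)$ with $\phi(u)=qv_J\sqrt{1-u^2}+v^Ju$; maximizing $\phi$ yields $\exp(\sqrt{(qv_J)^2+(v^J)^2}\,t)$. Normalizing by $m_G(B_t)\sim\exp(\sqrt{v_J^2+(v^J)^2}\,t)$ and setting $r_J=v_J/\sqrt{v_J^2+(v^J)^2}$, one gets $\|\beta_t^J\|_q\ll_\eta m_G(B_t)^{\sqrt{r_J^2+(1-r_J^2)/q^2}-1+\eta}$. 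Letting $q\to2$ gives the exponent $\sqrt{r_J^2+(1-r_J^2)/4}=\sqrt{3r_J^2+1}/2$, and taking the worst $J$ recovers $r$ and the claimed rate. This explicit Laplace-type optimization --- not an abstract interpolation --- is what produces the constant; without it the theorem does not follow.
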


According to our recipe, to prove Theorems \ref{th:S-arith} and \ref{th:S-arith2} we need to establish a
decay estimate for the operator norms of the averages $\beta_t$
supported on the sets $B_t$, and establish the Lipschitz well-roundedness of the balls. 

Turning to the first ingredient, we now show that the stable quantitative mean ergodic theorem for $G$ holds in great generality.


\begin{theorem}[\cite{N1}, see also \cite{GN}]\label{semisimple mean}{\bf Stable mean ergodic theorem for $S$-algebraic groups.}
Assume that the groups ${\sf G}_v$ are simply connected,
and at least one of ${\sf G}_v$'s is isotropic over $F_v$.
Consider an action of $G$ on a standard Borel probability space $(X,\mu)$
and assume that the corresponding representation $\pi_X^0$ of $G$ on the orthogonal
complement of $L^2(X)^G$ is $L^{p+}$.
Let $\beta$ be an absolutely continuous probability measure on $G$ such that
$\|\beta\|_q<\infty$ for some $q\in [1,2)$.
Then 
$$
\norm{\pi_X^0(\beta)}\le C_q \|\beta\|_q^{1/n_e(p)}
$$
where $n_e(p)$ is defined in (\ref{eq:kappa}).

When $\beta_t$ are the uniform averages supported on the sets $B_t$, we have
$$
\norm{\pi_X^0(\beta_t)}\le C^\prime_\eta m_G(B_t)^{-(2n_e(p))^{-1}+\eta},\quad \eta>0\,.
$$
In particular, if the family $B_t$ is H\"older well-rounded and
the action of $G$ on $(X,\mu)$ is ergodic, the stable quantitative mean ergodic theorem holds
with the above rate.
\end{theorem}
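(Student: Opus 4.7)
The plan is to establish the operator-norm bound $\|\pi_X^0(\beta)\|\le C_q\|\beta\|_q^{1/n_e(p)}$ by combining two principal ingredients already in the toolbox: the spectral transfer principle for semisimple groups and the Kunze--Stein phenomenon, and then to specialise to Haar-uniform averages $\beta_t$ and deduce the stable form from H\"older well-roundedness.

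First I would invoke the spectral transfer principle. Since the groups ${\sf G}_v$ are simply connected and at least one is isotropic over $F_v$, the group $G=\prod_{v\in S}G_v$ has no non-trivial finite-dimensional unitary representations, and the representation $\pi_X^0$ on $L^2(X)\ominus L^2(X)^G$ is genuinely infinite-dimensional. The hypothesis that $\pi_X^0$ is $L^{p+}$ means that its $K$-finite (or a dense set of) matrix coefficients lie in $L^{p+\eta}(G)$ for all $\eta>0$. Choosing $n=n_e(p)$, one sees that the tensor power $(\pi_X^0)^{\otimes n}$ has matrix coefficients in $L^{2+}(G)$, so by the Cowling--Haagerup--Howe criterion it is weakly contained in the regular representation $\lambda_G$. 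The transfer principle then gives
\begin{equation*}
\|\pi_X^0(\beta)\|\le \|\lambda_G(\beta)\|^{1/n_e(p)}
\end{equation*}
for absolutely continuous probability measures $\beta$. (Passing from a matrix-coefficient statement to an operator-norm statement uses self-adjointness of convolution with positive measures and the identity $\|\pi(\mu)\|^{2n}=\|\pi((\mu\ast\mu^{*})^{\ast n})\|$.)

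Second, I would apply the Kunze--Stein phenomenon, valid on each semisimple $F_v$-group $G_v$ and hence on the product $G$ (by an appropriate product/tensor argument for Archimedean and non-Archimedean factors): for any $q\in[1,2)$ and any $\beta\in L^q(G)$,
\begin{equation*}
\|\lambda_G(\beta)\|\le C_q\|\beta\|_q.
\end{equation*}
Combining this with the transfer bound yields the first assertion $\|\pi_X^0(\beta)\|\le C_q\|\beta\|_q^{1/n_e(p)}$.

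For the second assertion concerning $\beta_t=\chi_{B_t}/m_G(B_t)$, note that $\|\beta_t\|_q=m_G(B_t)^{1/q-1}$, so
\begin{equation*}
\|\pi_X^0(\beta_t)\|\le C_q\, m_G(B_t)^{(1/q-1)/n_e(p)}.
\end{equation*}
Letting $q\uparrow 2$ converts this into the stated bound $C'_\eta\, m_G(B_t)^{-(2n_e(p))^{-1}+\eta}$ for every $\eta>0$. Finally, to deduce the \emph{stable} quantitative mean ergodic theorem, I would apply the same estimate to the perturbed averages supported on $B_t^\pm(\vre)$; H\"older well-roundedness of $\{B_t\}$ implies $m_G(B_t^\pm(\vre))$ differs from $m_G(B_t)$ by a multiplicative factor of the form $1+O(\vre^a)$, uniformly in $t$ (for $\vre$ in a fixed interval), so the error function $E(t)$ obtained in the bound is independent of $\vre$, as required by Definition \ref{stable}. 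The main technical obstacle is the first step: while the spectral transfer principle and Kunze--Stein are classical over a single connected real semisimple Lie group, their formulation and proof over an $S$-algebraic product (mixing Archimedean with non-Archimedean factors) requires some care, but this has been carried out in \cite{N1} and \cite{GN}, which I would cite directly to complete the argument.
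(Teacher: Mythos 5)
Your argument follows essentially the same route as the paper: spectral transfer (via tensor powers of $\pi_X^0$ and weak containment of the $L^{2+}$ power in $\lambda_G$ by Cowling--Haagerup--Howe) reduces the bound to $\norm{\lambda_G(\beta)}$, the Kunze--Stein inequality for semisimple $S$-algebraic groups (Cowling for Archimedean, Veca for non-Archimedean, and a product argument) then gives the $\norm{\beta}_q$ bound, and the specialisation to $\beta_t$ with $q\uparrow 2$ and the stability deduction are identical. The only cosmetic difference is that you sketch the transfer inequality $\norm{\pi_X^0(\beta)}\le\norm{\lambda_G(\beta)}^{1/n_e(p)}$ via convolution powers and self-adjointness, whereas the paper proves it directly by Jensen's inequality applied to the matrix coefficient of $(\pi_X^0)^{\otimes n_e(p)}$.
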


\begin{proof}
We recall the spectral transfer principle from \cite{N1}. 
By Jensen's inequality, for real-valued functions $f_1,f_2\in \left(L^2(X)^G\right)^\perp$,
\begin{align*}
\left<\pi_X^0(\beta)f_1,f_2\right>^{n_e}&=\left(\int_G\left<\pi_X^0(g)f_1,f_2\right>d\beta(g)\right)^{n_e}
\le\int_G\left<\pi_X^0(g)f_1,f_2\right>^{n_e}d\beta(g)\\
&=\int_G\left<(\pi_X^0)^{\otimes n_e}(g)f_1^{\otimes n_e},f_2^{\otimes n_e}\right>d\beta(g)
=\left<(\pi_X^0)^{\otimes n_e}(\beta)f_1^{\otimes n_e},f_2^{\otimes n_e}\right>\\
&\le \|(\pi_X^0)^{\otimes n_e}(\beta)\| \,\|f_1\|^{n_e}\|f_2\|^{n_e}.
\end{align*}
This implies that
$$
\|\pi_X^0(\beta)\|\le \|(\pi_X^0)^{\otimes n_e}(\beta)\|^{1/n_e}.
$$
It is easy to see that $(\pi_X^0)^{\otimes n_e}$ is an $L^{2+}$-representation.
Hence, it follows from \cite{CHH} that $(\pi_X^0)^{\otimes n_e}$ is weakly contained in the regular
representation $\lambda_G$ of $G$, and
$$
\|(\pi_X^0)^{\otimes n_e}(\beta)\|\le \|\lambda_G(\beta)\|.
$$
We can now estimate $\|\lambda_G(\beta)\|$ using the Kunze-Stein inequality, namely 
$$
\|\beta*f\|_2\le C''_q \|\beta\|_q\,\|f\|_2,\quad q\in [1,2),
$$
for every $f\in L^2(G)$.
This inequality was proved for Archimedean semisimple groups
by Cowling \cite{Co1} and for non-Archimedean semisimple simply connected groups by Veca \cite{V}.
Clearly, a product of Kunze-Stein groups is a Kunze-Stein group. Indeed, if $\beta$ is a product function, the estimate for its norm as a convolution operator follows immediately by considering product functions $f$. Any $\beta$ is the $L^q(G)$-norm limit of a sequence of product functions, and the estimate follows.  
Hence every semisimple simply connected $S$-algebraic
group satisfies the Kunze-Stein inequality. This implies that 
$$
\|\lambda_G(\beta)\|\le C''_\eta m_G(B)^{-1/2+\eta},\quad \eta>0,
$$
and the and the desired norm estimate follows.

The second claim in Theorem \ref{semisimple mean}, namely the stable mean ergodic theorem for 
 families $B_t$ is an immediate consequence of the previous inequality, since the implied constant is uniform 
 for all sets $B^{\pm }_t(\vare)$.   
\end{proof}





\begin{remark}\label{r:lp}
Assume that the component groups ${\sf G}_v$ are simply connected. Then  
Theorem \ref{semisimple mean} 
applies, in particular, to the actions listed below. 
\begin{enumerate}
\item[(i)] $G$ is a Kazhdan group, and $(X,\mu)$ is any ergodic $G$-space.
\item[(ii)] $X=G/\Gamma$ where $G$ is an almost simple connected Lie group, and $\Gamma$ is any lattice.
\item[(iii)] $X=G/\Gamma$ where $\Gamma$ is an irreducible congruence subgroup in an $S$-arithmetic lattice of a semisimple $S$-algebraic group (see Section \ref{sec:conguence} for notation).
Moreover, the parameter $p^+=p^+(\Gamma)$ is then bounded above uniformly over all congruence subgroups (namely property $\tau$ holds).
\item[(iv)] $X=G/\Gamma$ where $G$ is a connected semisimple Lie group 
all of whose factors are locally isomorphic to $\hbox{SL}_2(\RR)$, and $\Gamma$ is any irreducible lattice.
\item[(v)] $X=G/\Gamma$ with $G$ as in (iii) and $\Gamma$ 
any lattice commensurable with an irreducible congruence lattice.

\end{enumerate}
Verification of these claims depends on the fact that  matrix 
coefficients of nontrivial irreducible representations $\pi$ of almost simple simply connected
groups are in $L^p$ for some $p=p(\pi)$ (see \cite{BW,Co,H,HM,Li,LZ,Oh}). 
This implies that $L^2_0(X)$ is an $L^p$-representation for some $p>0$ provided that it has strong
spectral gap, i.e., no noncompact simple factor of $G$ has almost invariant vectors.
Hence, it remains to check that in (ii)--(v), one has the strong spectral gap.
Now (ii) follows from the work of Borel and Garland \cite{BG},
(iii) follows from the work of  Clozel \cite{CL}, (iv) was recently proved by Kelmer and Sarnak \cite{KS}, and (v) follows 
from (iii) and \cite[Lemma~3.1]{KM}. 
\end{remark}

We now complete the proof of Theorems \ref{th:S-arith} and \ref{th:S-arith2} following the recipe of \S3.1.

\begin{proof}[Proof of Theorem \ref{th:S-arith}]
For $v\in S_f$, the local heights are bi-invariant 
under a compact open subgroups $\mathcal{O}^v$ of $G_v$.
For $v\in S_\infty$, we set
$$
\mathcal{O}^v_\vre =\{g\in G_v:\, H_v(\rho_v(g_v^{\pm 1})-id)<\vre\}.
$$
Then the family of symmetric neighbourhoods
$$
\mathcal{O}_\vre=\prod_{v\in S_\infty} \mathcal{O}^v_\vre\times  \prod_{v\in S_f} \mathcal{O}^v
$$
has local dimension $d$, which equals the real dimension of $G_\infty$. Using that for every $x_1,x_2\in\hbox{M}_{n_v}(K_v)$,
$$
H_v(x_1x_2)\le H_v(x_1)H_v(x_2),
$$
we deduce that for $g,h\in \mathcal{O}_\vre$ and $b\in B_T$,
$$
H(gbh)\le H(g)H(b)H(h)\le (1+\vre)^{2|S_\infty|}H(b)\le (1+\vre)^{2|S_\infty|}T.
$$
Hence,
\begin{equation}\label{eq:hh} 
B_T^+(\vre)=\mathcal{O}_\vre B_T \mathcal{O}_\vre \subset B_{(1+\vre)^{2|S_\infty|}T}.
\end{equation}
Similarly,
\begin{equation}\label{eq:hh2} 
B_T^-(\vre)\subset B_{(1+\vre)^{-2|S_\infty|}T}.
\end{equation}
Now, if $H_v$ are constant on $G_v$ for $v\in S_\infty$, 
then it is clear that 
the family $\{B_{e^t}\}$ is even Lipschitz admissible.
Otherwise, the function $t\mapsto \log \vol(B_{e^t})$ is uniformly H\"older
(see \cite[Theorem 7.19]{GN} or \cite{BO}),
and it follows from  (\ref{eq:hh}) and (\ref{eq:hh2}) that the family $\{B_{e^t}\}$
is H\"older admissible (and, in particular, H\"older well-rounded).
Hence, combining Corollary \ref{cor:shift} and Theorem \ref{semisimple mean},
the result follows.
\end{proof}

\begin{proof}[Proof of Theorem \ref{th:S-arith2}]
Let $\mathcal{O}_\vre=\{g\in G:\, d(g)<\vre\}$. It follows from the triangle inequality that
$$
B_t^+(\vre)\subset B_{t+\vre}\quad\hbox{and}\quad B_t^-(\vre)\supset B_{t-\vre}.
$$
If $S_\infty\ne \emptyset$, then the function $t\mapsto \log
m_G(B_t)$ is uniformly Lipschitz by \cite[Theorem 3.18]{GN}. Hence, the family $B_t$ is
Lipschitz well-rounded in this case, and in fact Lipschitz admissible. Otherwise, the family $B_t$ is bi-invariant under 
a compact open subgroup of $G$ and, in particular, Lipschitz well-rounded as well.

In view of Corollary \ref{cor:shift} (and Remark \ref{rem:main}(2)), it remains to prove the quantitative mean ergodic theorem
for the uniform averages $\beta_t$ along the sets $B_t$.
Namely, we need to show that for every $f\in L^2_0(X)$,
\begin{equation}\label{eq:b}
\|\pi^0_{G/\Gamma}(\beta_t)f\|_2\ll_\eta m_G(B_t)^{(1-\sqrt{3 r^2+1}/2)/n_e(p)+\eta}\|f\|_2, \quad \eta>0.
\end{equation} 
 
For $J\subset S$, we set $G_J=\prod_{j\in J} G_j$ and $G^J=\prod_{j\notin J} G_j$.
We observe that $L^2(X)=\sum_{J\subset I} \mathcal{H}_J$ where $\mathcal{H}_J$ are
orthogonal closed $G$-invariant subspaces of $L^2(X)$ such that every vector in 
$\mathcal{H}_J$ is fixed by $G_J$ and there are no nonzero vectors fixed by $G_j$, $j\notin J$.
We note that the representation of $G^J$ on $\mathcal{H}_J$ is $L^{p+}$.
Indeed, the representation of each $G_j$, $j\notin J$, on $\mathcal{H}_J$ is $L^{p+}$,
and every irreducible representation of $G^J$ appearing in the decomposition of $\mathcal{H}_J$
is a tensor product of irreducible representations of the factors.

For $f\in \mathcal{H}_J$ and $J\neq S$,
\begin{align*}
\pi^0_{G/\Gamma}(\beta_t)f(x)=\frac{1}{m_G(B_t)}\int_{B^J_t}
m_{G_J}(B_{J,\sqrt{t^2-d(g)^2}})f(g^{-1}x)\,dm_{G^J}(g)
=(\beta_t* f)(x)
\end{align*}
where $\beta_t(g)= \frac{1}{m_G(B_t)} m_{G_J}(B_{J,\sqrt{t^2-d(g)^2}})\chi_{B^J_t}$.
By Theorem \ref{semisimple mean}, for every $q\in [1,2)$,
\begin{equation}\label{eq:m}
\|\pi^0_{G/\Gamma}(\beta_t)f\|_2 \ll_q \|\beta_t\|_q^{1/n_e(p)}\|f\|_2.
\end{equation}
We have 
\begin{equation}\label{eq:m2}
\|\beta_t\|_q=\frac{1}{m_G(B_t)} \left(\int_{B^J_t} m_{G_J}(B_{J,\sqrt{t^2-d(g)^2}})^q\, dm_{G^J}(g)  \right)^{1/q}.
\end{equation}
Let 
\begin{align*}
v_J=\lim_{t\to\infty} \frac{1}{t}\log m_{G_J}(B_{J,t})\quad\hbox{and}\quad
v^J=\lim_{t\to\infty} \frac{1}{t}\log m_{G^J}(B^J_{t}). 
\end{align*}
This limits exist by \cite[Lemma 7.11]{GN}. We have 
\begin{equation}\label{eq:est}
m_G(B_{J,t}) \ll_\eta e^{(v_J+\eta)t}\quad\hbox{and}\quad m_G(B^J_{t}) \ll_\eta e^{(v^J+\eta)t}
\end{equation}
for all $\eta>0$ and $t\ge 0$, and
\begin{equation}\label{eq:est0}
m_G(B_{J,t}) \gg_\eta e^{(v_J-\eta)t}\quad\hbox{and}\quad m_G(B^J_{t}) \gg_\eta e^{(v^J-\eta)t}
\end{equation}
for all $\eta>0$ and $t\ge t(\eta)$.

We claim that for $\eta>0$ and $t\ge 0$,
\begin{align}\label{eq:m1}
\int_{B^J_t} m_{G_J}(B_{J,\sqrt{t^2-d(g)^2}})^q\, dm_{G^J}(g)
\ll_\eta \exp\left(\left(\sqrt{(q v_J)^2+(v^J)^2}+\eta\right)t\right),
\end{align}
and for $\eta>0$ and $t\ge t(\eta)$,
\begin{align}\label{eq:m2}
\int_{B^J_t} m_{G_J}(B_{J,\sqrt{t^2-d(g)^2}})^q\, dm_{G^J}(g)
\gg_\eta \exp\left(\left(\sqrt{(q v_J)^2+(v^J)^2}-\eta\right)t\right).
\end{align}
To verify these claims, we need to consider two cases: when $G^J$ has at least one
Archimedean factor, and when $G^J$ consists of non-Archimedean factors.

In the first case, we observe that the sets $B^J_t$ are admissible by \cite[Theorem 3.18]{GN}
and by \cite[Proposition 3.13]{GN}, we have   $m_{G^J}=\int_0^\infty m^J_t\, dt$ where 
$m^J_t$ be a measure supported on $S_t^J=\{g\in G^J: d(g)=t\}$. Moreover, it follows
from the admissibility that
\begin{equation}\label{eq:est2}
m_t^J(S^J_t) \ll_\eta e^{(v^J+\eta)t}
\end{equation}
for all $\eta>0$ and $t\ge 0$. Then by (\ref{eq:est}) and (\ref{eq:est2}),
\begin{align*}
\int_{B^J_t} m_{G_J}(B_{J,\sqrt{t^2-d(g)^2}})^q\, dm_{G^J}(g) &=\int_0^1 t\, m_{G_J}(B_{J,t\sqrt{1-u^2}})^q
m_{tu}^J(S^J_{tu})\, du\\
&\ll_\eta \int_0^1 t\exp((qv_J\sqrt{1-u^2}+v^Ju+\eta)t)\,du\\ 
&\ll_\eta \exp\left(\left(\sqrt{(q v_J)^2+(v^J)^2}+\eta\right)t\right)
\end{align*}
for every $\eta>0$, where the last estimate is obtained by maximising 
the function $\phi(u)= qv_J\sqrt{1-u^2}+v^Ju$.
This proves (\ref{eq:m1}). To prove the opposite inequality, we note that
\begin{equation*}\label{eq:est3}
m_t^J(S^J_t) \gg_\eta e^{(v^J-\eta)t}
\end{equation*}
for all $\eta>0$ and $t\ge t(\eta)$ (see \cite[Proof of Theorem 3.18]{GN}).
Taking a sufficiently small neighbourhood $U$
of the point $u_0\in (0,1)$ of maximum of the function $\phi$, we obtain
\begin{align*}
\int_{B^J_t} m_{G_J}(B_{J,\sqrt{t^2-d(g)^2}})^q\, dm_{G^J}(g) &\ge \int_U t\, m_{G_J}(B_{J,t\sqrt{1-u^2}})^q
m_{tu}^J(S^J_{tu})\, du\\
&\gg_\eta \exp\left(\left(\sqrt{(q v_J)^2+(v^J)^2}-\eta\right)t\right)
\end{align*}
for every $\eta>0$, which proves (\ref{eq:m2}).

In the case when $G^J$ is a product of non-Archimedean factors, we have
$$
\int_{B^J_t} m_{G_J}(B_{J,\sqrt{t^2-d(g)^2}})^q\, dm_{G^J}(g) =
\sum_{u \in [0,1]:\, m_{G^J}(S^J_{tu})\ne 0} m_{G_J}(B_{J,t\sqrt{1-u^2}})^q
m_{G^J}(S^J_{tu}).
$$
Since $|d(G^J)\cap [0,t]|\ll t^d$ for some $d>0$ and $t\ge 1$, the inequality (\ref{eq:m1})
follows from (\ref{eq:est}). Since the gaps between distances $d(G^J)$ are uniformly bounded,
there exists $u_t\in (0,1)$ such that $S_{tu_t}^J\ne \emptyset$ and $|u_t-u_0|=O(1/t)$.
Since $u_0\in (0,1)$, we have  $|\phi(u_t)-\phi(u_0)|=O(1/t)$.
As in \cite[Lemma 7.11]{GN}, when $S^J_t\ne \emptyset$, we have
\begin{equation*}\label{eq:est3}
m^J(S^J_t) \gg_\eta e^{(v^J-\eta)t}
\end{equation*}
for all $\eta>0$ and $t\ge t(\eta)$. Then
\begin{align*}
\int_{B^J_t} m_{G_J}(B_{J,\sqrt{t^2-d(g)^2}})^q\, dm_{G^J}(g) &\gg_\eta
\exp\left(\left(qv_J\sqrt{1-u_t^2}+v^Ju_t-\eta\right)t\right)\\
&\ge \exp\left(\left(qv_J\sqrt{1-u_0^2}+v^Ju_0-O(1/t)-\eta\right)t\right)
\end{align*}
for every $\eta>0$. This implies (\ref{eq:m2}).

Since
$$
m_G(B_t)=\int_{B^J_t} m_{G_J}(B_{J,\sqrt{t^2-d(g)^2}})\, dm_{G^J}(g),
$$
the estimates (\ref{eq:m1}) and  (\ref{eq:m2}) imply that
$$
v:=\lim_{t\to\infty} \frac{1}{t}\log m_{G}(B_{t})=\sqrt{(v_J)^2+(v^J)^2}.
$$
Setting $r_J=v_J/v$, we obtain 
\begin{align*}
\left(\int_{B^J_t} m_{G_J}(B_{J,\sqrt{t^2-d(g)^2}})^q\, dm_{G^J}(g)  \right)^{1/q}
&\ll_\eta \exp\left(\left(\sqrt{(v_J)^2+(v^J/q)^2}+\eta\right)t\right)\\
&\ll_\eta m_G(B_t)^{\sqrt{r_J^2+q^{-2}(1-r_J^2)}+\eta}
\end{align*}
for every $\eta>0$.

Finally, it follows from (\ref{eq:m}) and (\ref{eq:m1}) that
$$
\|\pi^0_{G/\Gamma}(\beta_t)f\|_2\ll_\eta
m_G(B_t)^{\left( 1-\sqrt{r_J^2+(1-r_J^2)/q^2}\right)/n_e(p)+\eta}\|f\|_2,\quad
\eta>0.
$$
Since this estimate holds for every $q\in [1,2)$, the claim (\ref{eq:b}) follows.
This completes the proof of the theorem.
\end{proof}
 
\section{Congruence subgroups and density hypothesis}\label{sec:conguence}
Let ${\sf G}\subset\hbox{GL}_m$ be a connected semisimple  algebraic group defined over a number field $F$.
We fix a finite set $S$ of places of $F$ which contains
all Archimedean places $V_\infty$ and the group $G=\prod_{v\in S} {\sf G}(F_v)$ is noncompact.
Then $\Gamma={\sf G}(\mathcal{O}_S)$, where $\mathcal{O}_S$ is the ring of $S$-integers,
is a lattice in $G$. Given an ideal $\mathfrak{a}$ of $\mathcal{O}_S$,
we introduce a congruence subgroup
$$
\Gamma(\mathfrak{a})=\{\gamma\in\Gamma:\, \gamma=I \mod\mathfrak{a}\}.
$$
The height function $H$ on $G$ is defined as in (\ref{eq:H}) and $B_T=\{g\in G:\, H(g)<T\}$.
If $\sf G$ is simply connected and $F$-simple, then
 property $(\tau)$, established in full generality by Clozel \cite{CL}, shows that there exists
$p>0$ such that all representations in  $L_0^2(G/\Gamma(\mathfrak{a}))$ as $\mathfrak{a}$ varies are $L^{p+}$-representations,  with $p^+$ independent of $\mathfrak{a}$.
Hence, Theorem \ref{th:S-arith} immediately implies the following uniformity result in counting lattice points, generalising \cite{NS}.

\begin{theorem}\label{uniform}{\bf Uniformity in counting over congruence groups.}
Keeping the notation of the previous paragraph, if $\sf G$ is simply connected
and $F$-simple, there exists $T_0>0$ such that for every $\gamma_0\in\Gamma$, all ideals
$\mathfrak{a}$ of $\mathcal{O}_S$, and $T\ge T_0$,  
\begin{align*}
\abs{\set{\gamma\in \gamma_0\Gamma(\mathfrak{a})\,:\, H(\gamma)< T}}
&=\frac{\vol(B_T)}{[\Gamma:\Gamma(\mathfrak{a})]}+O_\eta\left(\vol(B_T)^{1-(2n_e(p))^{-1}
    a/(a+d)+\eta}\right)
\end{align*}
for every $\eta>0$. Here the measure on $G$ is normalised so that $\vol(G/\Gamma)=1$,
$a$ is the H\"older exponent for the family  $\{B_{e^t}\}$, $d=\sum_{v\in V_\infty} \dim {\sf G}(F_v)$, and the implied constant is independent of the ideal $\mathfrak{a}$. 
\end{theorem}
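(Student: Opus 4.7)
The plan is to deduce Theorem \ref{uniform} directly from Corollary \ref{cor:shift} applied to each congruence subgroup $\Gamma(\mathfrak{a})$, by verifying that every hypothesis holds with constants independent of $\mathfrak{a}$ and of the coset representative $\gamma_0\in\Gamma$. With $x=\gamma_0$ and $y=e$ one counts $|\gamma_0\Gamma(\mathfrak{a})\cap B_T|$, and under the normalisation $\vol(G/\Gamma)=1$ one has $V(\Gamma(\mathfrak{a}))=[\Gamma:\Gamma(\mathfrak{a})]$, which matches the main term of the statement.

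The geometric inputs are immediate and do not depend on the lattice. As shown in the proof of Theorem \ref{th:S-arith}, the family $B_T=B_{e^t}$ is H\"older admissible (in particular H\"older well-rounded) with a fixed exponent $a>0$, with respect to a family $\mathcal{O}_\vre$ of upper local dimension $d=\sum_{v\in V_\infty}\dim {\sf G}(F_v)$. These parameters are intrinsic to $G$ and the chosen height, so they are the same for every $\Gamma(\mathfrak{a})$.

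The spectral input is the only nontrivial ingredient, and it is supplied by Clozel's property $(\tau)$: there exists $p>0$ such that the representation of $G$ on $L_0^2(G/\Gamma(\mathfrak{a}))$ is $L^{p+}$ for every ideal $\mathfrak{a}$, with the same $p$. Since $\sf G$ is simply connected and $F$-simple, Theorem \ref{semisimple mean} then applies uniformly and yields the stable quantitative mean ergodic theorem on $L^2(G/\Gamma(\mathfrak{a}))$ with rate
\[
E(T)\ll_\eta m_G(B_T)^{-(2n_e(p))^{-1}+\eta},
\]
the implied constant depending only on $p$ and $\eta$, not on $\mathfrak{a}$.

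Finally, one needs a uniform lower bound on the injectivity radius. For $\gamma_0\in\Gamma$, by left-translating one sees that $\vre_0(\gamma_0,\Gamma(\mathfrak{a}))$ equals the injectivity radius at the identity for the conjugate lattice $\gamma_0\Gamma(\mathfrak{a})\gamma_0^{-1}$. Since $\gamma_0\in\Gamma$ and $\Gamma(\mathfrak{a})\subset\Gamma$, this conjugate is contained in $\Gamma$, so $\vre_0(\gamma_0,\Gamma(\mathfrak{a}))\ge \vre_0(e,\Gamma)>0$ uniformly in $\mathfrak{a}$ and $\gamma_0$. Inserting these uniform values of $a$, $d$, $\vre_0$ and $E(T)$ into Corollary \ref{cor:shift} yields the stated error $\vol(B_T)^{1-(2n_e(p))^{-1}a/(a+d)+\eta}$, with the implied constant independent of both $\mathfrak{a}$ and $\gamma_0$. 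The proof is essentially an assembly argument, and the only subtle point to address is the uniformity of the injectivity radius along the coset and under the conjugation by $\gamma_0$; this is the step I would present most carefully.
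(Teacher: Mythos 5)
Your proposal is correct and takes essentially the same route as the paper: the paper deduces Theorem~\ref{uniform} directly from Theorem~\ref{th:S-arith} (which is itself Corollary~\ref{cor:shift} combined with Theorem~\ref{semisimple mean} and H\"older well-roundedness), with the uniform spectral gap supplied by Clozel's property~$(\tau)$ and the uniform injectivity radius implicit in the containments $\gamma_0\Gamma(\mathfrak{a})\gamma_0^{-1}\subset\Gamma$. Your more careful treatment of $\vre_0(\gamma_0,\Gamma(\mathfrak{a}))\ge\vre_0(e,\Gamma)$ is exactly the point that Corollary~\ref{c:compact} in the paper is designed to encapsulate.
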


Let us now recall the following conjecture:

\begin{conjecture}[\cite{SX},\cite{S}]\label{main} 
 For any semisimple algebraic $\QQ$-group 
${\sf G}\subset \hbox{\rm GL}_m$,  
the following upper bound holds, uniformly in $N\in \NN$ (for any fixed choice of norm)
$$\abs{\set{
\gamma\in \Gamma(N)\,\,:\,\, \norm{\gamma}< T}}= O_\eta\left(
\frac{T^{\alpha+\eta}}{[\Gamma(1):\Gamma(d)]}+T^{\alpha/2}\right),\quad \eta>0,
$$
where $\Gamma(N)$ are the principal congruence group mod $N$ in 
${\sf G}(\ZZ)$, and 
$$\alpha=\limsup_{T\to \infty}
\frac{\log \vol(B_T)}{\log T}.
$$
\end{conjecture}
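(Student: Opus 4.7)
The main term $\vol(B_T)/[\Gamma(1):\Gamma(N)]$ predicted by Conjecture \ref{main} is exactly the one provided by Theorem \ref{uniform}, so the whole content of the conjecture is the sharp error term $T^{\alpha/2+\eta}$, which is the expected ``square root of the main term'' barrier. It is important to note at the outset that a direct application of Theorem \ref{uniform} cannot reach this bound: even for the strongest tempered input $p^+=2$, the geometric comparison argument of Theorem \ref{error estimate} introduces a loss of $a/(a+d)<1$ in the exponent, producing an error of size $\vol(B_T)^{1-a/(2(a+d))+\eta}\gg T^{\alpha/2}$. Therefore the proof must sharpen both the spectral and the geometric steps of our framework.

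The refined plan is to decompose the unitary representation of $G$ on $L_0^2(G/\Gamma(N))$ into its isotypic components,
\begin{equation*}
L_0^2(G/\Gamma(N))=\bigoplus_\pi m(\pi,\Gamma(N))\,\pi,
\end{equation*}
and to run the Chebyshev-type estimate in the proof of Theorem \ref{error estimate} inside each component separately, using the sharp $L^{p^+(\pi)}$ bound $\|\pi(\beta_T)\|\ll_\eta \vol(B_T)^{-1/p^+(\pi)+\eta}$ from the spectral transfer principle of Theorem \ref{semisimple mean}. Projecting the test function $\phi_\vre$ onto the $\pi$-isotypic component and applying Parseval converts the counting problem into a spectral sum; after optimising in the regularisation parameter $\vre$ (which eliminates the geometric loss above, since the regularisation only enters through $\|\phi_\vre\|_2$ and can be balanced against the spectral decay in each component) one arrives at an upper bound of the form
\begin{equation*}
\Big||\Gamma(N)\cap B_T|-\frac{\vol(B_T)}{[\Gamma(1):\Gamma(N)]}\Big|^2
\ll_\eta \sum_\pi m(\pi,\Gamma(N))\,\vol(B_T)^{2-2/p^+(\pi)+\eta}.
\end{equation*}

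The conjecture is then reduced to two automorphic inputs. First, a Weyl-type asymptotic $\sum_{\pi\text{ tempered}} m(\pi,\Gamma(N))\ll [\Gamma:\Gamma(N)]$, which is standard and contributes $T^{\alpha/2+\eta}$ to the bound. Second, the Sarnak--Xue \emph{density hypothesis}
\begin{equation*}
m(\pi,\Gamma(N))\ll_\eta [\Gamma(1):\Gamma(N)]^{2/p^+(\pi)+\eta}
\end{equation*}
for every non-tempered irreducible $\pi$ occurring in $L_0^2(G/\Gamma(N))$. Under this hypothesis each exceptional term contributes at most $[\Gamma(1):\Gamma(N)]^{1/p^+(\pi)}\vol(B_T)^{1-1/p^+(\pi)+\eta}$; by the weighted arithmetic-geometric mean inequality this quantity is dominated by the sum of its values at the endpoints $p^+(\pi)=2$ and $p^+(\pi)=\infty$, which are precisely $T^{\alpha/2}$ and $T^{\alpha+\eta}/[\Gamma(1):\Gamma(N)]$, the two terms appearing in the conjectured bound.

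The main obstacle is of course the density hypothesis itself, which is a central open problem. It is known unconditionally for congruence subgroups of $\mathrm{SL}_2$ (Sarnak--Xue, Huxley), and in a number of higher-rank cases through Arthur's endoscopic classification, the theta correspondence, and the Langlands--Shahidi method; in each such case the recipe above yields Conjecture \ref{main} unconditionally for the group $\sf G$ in question. The contribution of our framework here is to cleanly decouple the geometric regularisation from the automorphic spectral input, thereby reducing the counting conjecture to the density hypothesis without invoking the trace formula directly, and furnishing in particular the correct main term uniformly in the level $N$ via Theorem \ref{uniform}.
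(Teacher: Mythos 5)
The statement you are trying to prove is Conjecture \ref{main}, which the paper presents \emph{without proof}; it is attributed to \cite{SX} and \cite{S} and, as the paper itself notes, is open except for arithmetic lattices in $\mathrm{SL}_2(\RR)$ and $\mathrm{SL}_2(\CC)$. The discussion immediately following Corollary \ref{cor} makes explicit that the ergodic-theoretic framework of the paper cannot prove it: even with the strongest possible spectral input (every constituent of $L_0^2(G/\Gamma(N))$ tempered), the Chebyshev step in the proof of Theorem \ref{error estimate} forces a geometric loss $a/(a+d)$, because one must locate a single good translate $h_t\in\mathcal{O}_\vre$ in a set whose bad complement has measure smaller than $m_{G/\Gamma}(\mathcal{O}_\vre\Gamma)$. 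So there is no proof in the paper to compare against, and your observation that Theorem \ref{uniform} cannot be applied directly is exactly the paper's own remark.

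Your proposed argument, however, does not constitute a proof either. The displayed inequality
$$
\Big||\Gamma(N)\cap B_T|-\frac{\vol(B_T)}{[\Gamma(1):\Gamma(N)]}\Big|^2
\ll_\eta \sum_\pi m(\pi,\Gamma(N))\,\vol(B_T)^{2-2/p^+(\pi)+\eta}
$$
is asserted without derivation and is not a consequence of the isotypic decomposition of $\phi_\vre$ together with Chebyshev. Decomposing the mean-zero part $\phi_\vre^0=\sum_\pi\phi_\pi$ gives $\|\pi_{G/\Gamma}(\beta_T)\phi_\vre^0\|_2^2=\sum_\pi\|\pi(\beta_T)\phi_\pi\|_2^2$, but the individual norms $\|\phi_\pi\|_2^2$ are not controlled by $m(\pi,\Gamma(N))$; making the multiplicity appear as a factor requires a pre-trace-formula identity and a smooth test function, which is precisely the smooth weighted counting the paper says \emph{does} reach the square-root error (cf. \cite{NS}), not the sharp-cutoff count the conjecture is about. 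Your claim that the regularisation loss ``can be balanced against the spectral decay in each component'' does not address the point that the Chebyshev step is a pointwise existence argument rather than an average, so the injectivity-radius constraint on $\vre$ survives the decomposition. Your Weyl-type input $\sum_{\pi\ \text{tempered}}m(\pi,\Gamma(N))\ll[\Gamma(1):\Gamma(N)]$ is also false as stated: that sum diverges without a cutoff in the spectral parameter, and introducing a cutoff again puts you in the smooth-counting setting. Finally, even repairing all of this would only reduce Conjecture \ref{main} to the density hypothesis, Conjecture \ref{density}, which is itself a central open problem; and the implication the paper cites from \cite{SX}, in real rank one with $\Gamma$ cocompact, runs in the \emph{opposite} direction, from Conjecture \ref{main} to the density hypothesis. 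As written, your argument is a conditional heuristic reduction to another open conjecture, not a proof, and it should be presented as such.
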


From Theorem \ref{uniform}, we obtain the following result  :
 
\begin{corollary}\label{cor}
 For any semisimple  $\QQ$-simple algebraic group ${\sf G}\subset \hbox{\rm GL}_m$,  and any fixed choice of norm, 
$$\abs{\set{\gamma\in \Gamma(N)\,:\,  \norm{\gamma}< T}}=O_\eta \left(
\frac{T^{\alpha+\eta}}{[\Gamma(1) : \Gamma(N)]}+
T^{(\alpha+\eta)(1 -\theta)}\right),\quad \eta>0,$$
where $\theta=(2n_e(p))^{-1}/\left(1+\dim \left({\sf G}(\mathbb{R})/K\right)\right)$ and 
$K$ is a maximal compact subgroup of ${\sf G}(\mathbb{R})$. Here $N\in \NN$ is arbitrary, and the implied constant is independent of $N$.  $\alpha$ is the rate of volume growth of the norm balls. 
\end{corollary}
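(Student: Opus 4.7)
The plan is to specialise Theorem \ref{uniform} directly to $F=\mathbb{Q}$, $S=V_\infty=\{\infty\}$, $\Gamma={\sf G}(\mathbb{Z})$, $\mathfrak{a}=N\mathbb{Z}$ (so that $\Gamma(\mathfrak{a})=\Gamma(N)$), $\gamma_0=e$, and then convert the resulting volume-based bound into a bound in $T^\alpha$. The preliminary step is cosmetic: since any two norms on the space of $m\times m$ real matrices are equivalent, replacing the ``fixed choice of norm'' by a Cartan-compatible Hilbert--Schmidt inner product norm (which always exists for the inclusion ${\sf G}\subset\hbox{GL}_m$ by the standard existence of an inner product in which $K$ acts orthogonally and $\mathfrak{p}$ self-adjointly) changes $T$ only by a bounded multiplicative factor, which is absorbed into $O_\eta$. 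The resulting balls $B_T$ are then bi-invariant under a maximal compact subgroup $K$ of ${\sf G}(\mathbb{R})$, so by Remark \ref{rem:main}(2) the local dimension parameter in Theorem \ref{uniform} may be taken to be $d=\dim(K\backslash{\sf G}(\mathbb{R}))$; and because the representation $\rho_\infty$ is self-adjoint with respect to the chosen inner product, Remark \ref{rem:un}(1) gives H\"older exponent $a=1$.

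Clozel's property $(\tau)$ furnishes a single $p>0$ such that $\pi^0_{G/\Gamma(N)}$ is $L^{p+}$ uniformly in $N$. Hence Theorem \ref{uniform} applies to each $\Gamma(N)$ with identical parameters, and yields a threshold $T_0>0$ (independent of $N$) such that for $T\ge T_0$,
\begin{equation*}
\abs{\Gamma(N)\cap B_T}=\frac{\vol(B_T)}{[\Gamma(1):\Gamma(N)]}+O_\eta\!\left(\vol(B_T)^{1-\theta+\eta}\right),\qquad\eta>0,
\end{equation*}
with $\theta=(2n_e(p))^{-1}/(1+\dim(K\backslash{\sf G}(\mathbb{R})))$, which is precisely the constant in the statement. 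By the very definition of $\alpha$, for every $\eta>0$ there is $T_1(\eta)$ with $\vol(B_T)\le T^{\alpha+\eta}$ whenever $T\ge T_1(\eta)$. Substituting this bound into both terms above, and absorbing the composite $\eta$-losses into a single $\eta$, gives
\begin{equation*}
\abs{\Gamma(N)\cap B_T}\ll_\eta \frac{T^{\alpha+\eta}}{[\Gamma(1):\Gamma(N)]}+T^{(\alpha+\eta)(1-\theta)},
\end{equation*}
as claimed; the finitely many values $T<\max(T_0,T_1(\eta))$ are harmless, since one has the trivial bound $\abs{\Gamma(N)\cap B_T}\le\abs{\Gamma(1)\cap B_T}=O(1)$ there.

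The only delicate point is uniformity in $N$: the spectral parameter $p$ is uniform by property $(\tau)$, the geometric constants (H\"older exponent, local dimension, norm-equivalence constants) depend only on ${\sf G}(\mathbb{R})$ and on the norm, and the injectivity radius satisfies $\vre_0(e,\Gamma(N))\ge\vre_0(e,\Gamma(1))$ because $\Gamma(N)\subset\Gamma(1)$. Thus every constant appearing in Theorem \ref{uniform} --- including $T_0$ and the implied constant in the error --- is bounded independently of $N$, which is exactly what the corollary asserts.
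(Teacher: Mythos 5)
Your argument matches the paper's approach in its main steps (pass to a Cartan‑compatible norm so that $a=1$ and the balls are bi‑$K$‑invariant, invoke Clozel's property $(\tau)$ for a uniform $p$, convert $\vol(B_T)$ to $T^{\alpha+\eta}$), but it omits the reduction to the simply connected case, which is a genuine gap, not a cosmetic one.

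The corollary is stated for an arbitrary semisimple $\QQ$-simple group ${\sf G}\subset\hbox{GL}_m$, but every ingredient you invoke requires simply connectedness: Theorem~\ref{uniform} is stated under the hypothesis ``$\sf G$ is simply connected and $F$-simple,'' Theorem~\ref{th:S-arith} likewise assumes the local groups are simply connected, and Clozel's theorem (property~$(\tau)$) that you appeal to for the uniform $L^{p+}$ bound is proved for simply connected groups. If ${\sf G}$ is not simply connected none of these apply directly, and your claim that ``Theorem~\ref{uniform} applies to each $\Gamma(N)$'' is unjustified. The paper fills this in by passing to the simply connected cover $\pi:\tilde{\sf G}\to{\sf G}$, noting that $\pi(\tilde{\sf G}(\mathbb{Z}))$ is commensurable with ${\sf G}(\mathbb{Z})$, and deducing the count for $\Gamma(N)$ from the count for the corresponding subgroups of $\tilde{\sf G}(\mathbb{Z})$. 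Since commensurability changes indices and lattice constants only by bounded multiplicative factors, this is compatible with a big-$O$ statement, but the step must be carried out (one must in particular check that the image of each congruence subgroup in $\tilde{\sf G}$ maps onto a finite-index subgroup comparable to $\Gamma(N)$). Once this reduction is inserted at the start, the remainder of your argument is correct and is the same as the paper's: self-adjoint embedding and bi-$K$-invariance via Remark~\ref{rem:main}(2) and Remark~\ref{rem:un}(1),(4), uniformity of $p$ by $(\tau)$, and the conversion $\vol(B_T)\le T^{\alpha+\eta}$ absorbing $\eta$-losses.
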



\begin{proof}
To deduce this corollary from Theorem \ref{uniform}, we consider the simply
connected cover $\pi:\tilde{\sf G}\to {\sf G}$ and note that $\pi(\tilde{\sf G}(\mathbb{Z}))$
is commensurable to ${\sf G}(\mathbb{Z})$. Hence, without loss of generality, we may
assume that ${\sf G}$ is simply connected.

With respect to a suitable basis of $\mathbb{R}^d$, ${\sf G}$ is self-adjoint
and there exists a maximal compact subgroup $K\subset {\sf G}(\mathbb{R})$
such that $K\subset \hbox{SO}_m(\mathbb{R})$.
We note that the estimate in the theorem is independent of a choice of the norm.
Hence, we may assume that $\|\cdot \|$ is a Euclidean norm with respect to
the above basis. Then the sets $\{g\in {\sf G}(\mathbb{R}):\, \|g\|\le e^t\}$
are Lipschitz admissible (see \cite[Theorem 3.15]{GN}), and bi-$K$-invariant.
Hence, the corollary follows from Theorem \ref{th:S-arith} and Remarks \ref{rem:un}(1),(4).
\end{proof}


Comparing Corollary \ref{cor} and Conjecture \ref{main}, let us note the following.

\begin{enumerate}
\item The second term appearing in the estimate stated in  Conjecture \ref{main} is the best possible, and is asserted only for the {\it principal} congruence groups. 
It may fail if more general finite-index subgroups are admitted 
as demonstrated by the construction of exceptional eigenvalues in 
\cite{BS}, \cite{BLS}. Thus the conjecture predicts 
a regularity property of the lattice point counting problem 
satisfied specifically by principal congruence subgroups. 
\item The proof of Theorem \ref{error estimate}  
above can not produce the second term called for in Conjecture \ref{main}, which is the square 
root of the volume of the ball. Indeed, Theorem \ref{error estimate} will still 
yield an error term greater than the square root of the volume even if the spectral gap 
is the largest possible, namely all the representations occurring 
in $L^2_0({\sf G}(\RR)/\Gamma(N))$ are tempered. An error term with this quality can be established only for a smooth 
weighted form of the lattice point counting problem, see \cite{NS}.
On the upside, Theorem \ref{uniform} actually gives an error 
estimate, uniform over all $\Gamma(N)$ and their cosets, namely a lower bound 
 as well as an upper bound. 

\end{enumerate}

The cases 
where Conjecture \ref{main} has been verified are the set of 
 arithmetic lattices in $\hbox{SL}_2(\RR)$ and $\hbox{SL}_2(\CC)$
 \cite[Thm. 1]{SX}. Note that in 
 those cases the conjecture was established {\it without } 
assuming a spectral gap, and indeed was used to derive it,  
thus giving an independent approach to uniform spectral gaps 
for congruence subgroups \cite[Corollary 2]{SX}. 

An important application of Conjecture \ref{main} is to the density 
hypothesis, which bounds the multiplicities of the 
${\sf G}(\RR)$-representations occurring in $L^2({\sf G}(\RR)/\Gamma(N))$ (see \cite{DG-W1}\cite{DG-W2} for a discussion of this problem). 
For an irreducible non-trivial $\pi$, we let $p_K^+(\pi)$ denote the infimum 
over $p \ge 2$ such that the $K$-finite matrix coefficients of $\pi$ 
are in $L^p({\sf G}(\RR))$. Let $m(\pi,\Gamma(N))$ denote the multiplicity 
in which $\pi$ occurs in $L^2({\sf G}(\RR)/\Gamma(N))$. 
Consider the following ``density hypothesis'': 

\begin{conjecture}[\cite{SX}, \cite{S}]\label{density}
With notation as in Conjecture \ref{main}, assume that  ${\sf G}(\ZZ)$ is cocompact.
Then for all $\eta > 0$,
$$m(\pi,\Gamma(N))=O_\eta\left([\Gamma(1) :\Gamma(N)]^{(2/p_K^+(\pi))+\eta}\right).$$
\end{conjecture}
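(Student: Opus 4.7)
The plan is to deduce Conjecture~\ref{density} from the counting Conjecture~\ref{main}, following the pre-trace formula strategy of Sarnak--Xue~\cite{SX}. Fix an irreducible non-trivial unitary representation $\pi$ of ${\sf G}(\RR)$ with $p = p_K^+(\pi)$. Since Conjecture~\ref{density} is stated for cocompact ${\sf G}(\ZZ)$, we may avoid continuous-spectrum issues. The key quantity is the trace of the convolution operator $R(h_T)$ on $L^2({\sf G}(\RR)/\Gamma(N))$, where $h_T = f_T * f_T^*$ for a smooth bi-$K$-invariant test function $f_T$ supported in the ball $B_T$, which we compute both spectrally and geometrically.

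On the spectral side, $R(h_T)$ is positive and preserves the $\pi$-isotypic subspace $W_\pi$. Restricted to the $K$-fixed line of each copy of $\pi$, it acts as multiplication by $|\widehat{f_T}(\pi)|^2$, where $\widehat{f_T}(\pi) = \int f_T\,\phi_\pi\,dm_G$ is the spherical transform. For $f_T$ chosen to maximise the spherical transform (e.g.\ a bi-$K$-invariant function built from $\phi_\pi\cdot\chi_{B_T}$), one can achieve
\[
|\widehat{f_T}(\pi)|^2 \;\gg_\eta\; \vol(B_T)^{-2/p-\eta},
\]
using the $L^{p+\eta}$-integrability of $\phi_\pi$ together with the Harish-Chandra asymptotics. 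Hence the trace of $R(h_T)$ on the $K$-fixed part of $W_\pi$ is bounded below by a constant times $m(\pi,\Gamma(N))\cdot\vol(B_T)^{-2/p-\eta}$.

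On the geometric side, $\tr(R(h_T))$ equals the integral of the automorphic kernel $\sum_{\gamma\in\Gamma(N)} h_T(x^{-1}\gamma x)$ over ${\sf G}(\RR)/\Gamma(N)$. Unfolding and bounding the non-identity orbital integrals by $\|h_T\|_\infty \ll \vol(B_T)^{-1}$, one obtains, up to lower-order orbital terms,
\[
\tr(R(h_T)) \;\ll\; \frac{V(\Gamma(N))}{\vol(B_T)}\bigl(1 + |\Gamma(N)\cap B_{CT}|\bigr)
\]
for some absolute $C>0$. Comparing with the spectral lower bound and invoking Conjecture~\ref{main}, we arrive at
\[
m(\pi,\Gamma(N)) \;\ll_\eta\; \vol(B_T)^{2/p-1+\eta}\,V(\Gamma(N))\left(\frac{T^{\alpha+\eta}}{[\Gamma(1):\Gamma(N)]} + T^{\alpha/2+\eta}\right).
\]
Balancing the two geometric terms by choosing $T^{\alpha/2} \sim [\Gamma(1):\Gamma(N)]$ and using $V(\Gamma(N))\asymp[\Gamma(1):\Gamma(N)]$ then yields the desired bound $m(\pi,\Gamma(N)) \ll_\eta [\Gamma(1):\Gamma(N)]^{2/p+\eta}$.

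The principal obstacle is that Conjecture~\ref{main} itself is open in general; it is known in full strength essentially only for rank-one cases such as $\mathrm{SL}_2$ treated by Sarnak--Xue. Beyond this conditionality, executing the sharp spectral lower bound $|\widehat{f_T}(\pi)|^2 \gg \vol(B_T)^{-2/p-\eta}$ requires careful control of $\phi_\pi$ on the walls of the Weyl chamber (where it falls below its generic asymptotic), and when $\pi$ has no $K$-fixed vector one must rerun the entire argument with an appropriate $K$-isotypic test function and a matrix coefficient in place of $\phi_\pi$, adding substantial though mostly technical bookkeeping. Finally, controlling the non-identity orbital integrals uniformly in $N$, in particular for elements with centralisers whose volumes vary across the family of congruence subgroups, is a further delicate step that already required nontrivial work in the $\mathrm{SL}_2$ case.
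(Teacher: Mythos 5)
The item you are asked to prove is labelled a \emph{conjecture} and the paper does not prove it. The authors only record it (attributed to Sarnak--Xue \cite{SX} and Sarnak \cite{S}) and then remark that in the real-rank-one cocompact case the implication ``Conjecture \ref{main} $\Rightarrow$ Conjecture \ref{density}'' was established in \cite{SX}, and that their Theorem \ref{uniform} can be combined with the \cite{SX} method to recover, unconditionally, a \emph{weaker} multiplicity bound. So there is no proof in the paper for you to be compared against; what you have written is a conditional reduction, which you honestly flag as such.

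Taking the proposal on its own terms, there is a concrete gap in the final optimisation. From your displayed bound
\[
m(\pi,\Gamma(N)) \;\ll_\eta\; \vol(B_T)^{2/p-1+\eta}\,V(\Gamma(N))\left(\frac{T^{\alpha+\eta}}{[\Gamma(1):\Gamma(N)]} + T^{\alpha/2+\eta}\right),
\]
putting $I=[\Gamma(1):\Gamma(N)]$, $V(\Gamma(N))\asymp I$, $\vol(B_T)\asymp T^\alpha$ and choosing $T^{\alpha/2}\sim I$ gives
\[
m(\pi,\Gamma(N)) \ll_\eta I^{2(2/p-1)}\cdot I\cdot\bigl(I+I\bigr)\cdot I^{\eta}\;=\;O_\eta\!\left(I^{\,4/p_K^+(\pi)+\eta}\right),
\]
i.e.\ an extra factor of $I^{2/p_K^+(\pi)}$ over the conjectured bound. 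Equivalently, in the rank-one cocompact $\mathrm{SL}_2$ setting this scheme gives $I^{3-2s+\eta}$ rather than the conjectured $I^{2(1-s)+\eta}$. The loss comes from the step where you majorise the off-diagonal part of the automorphic kernel by $\|h_T\|_\infty\cdot |\Gamma(N)\cap B_{CT}|$: this throws away the rapid decay of $h_T=f_T*f_T^*$ away from the identity. The Sarnak--Xue argument instead bounds $h_T$ pointwise as a function of the distance of $x^{-1}\gamma x$ from the identity, decomposes the orbital sum dyadically in that distance, applies Conjecture~\ref{main} at each scale, and sums — this is exactly what removes the extra power of the index. As written, your proposal proves only the ``weaker estimate'' that the paper's remark alludes to, not Conjecture~\ref{density}, even conditionally. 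If you want the sharp conditional implication you must replace the $L^\infty$ majorisation of the kernel by this scale-by-scale argument (and, as you note, treat the non-spherical $\pi$ and the uniformity over $N$ of the non-identity orbital terms with the care that \cite{SX} devotes to them).
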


When $G$ has real rank one and $\Gamma$ is cocompact, the density 
hypothesis was shown to follow from Conjecture \ref{main} (see \cite{SX}). 

\begin{remark}
The method used in \cite{SX} can be combined  
with Theorem \ref{uniform} to give an alternative proof of a result 
in \cite{SX}, which states that the estimate in Conjecture \ref{density} holds 
with the power $2/p_K(\pi)$ of $[\Gamma(1):\Gamma(N)]$ replaced 
by a weaker estimate. 
However, since Theorem \ref{uniform} does not require the 
rank-one hypothesis, one expects that it 
 can be used to establish a multiplicity bound in 
terms of an appropriate power of $[\Gamma(1):\Gamma(N)]$ 
more generally, for groups of arbitrary rank.

\end{remark} 

\section{Rational points, and Kunze-Stein phenomenon on adele groups}\label{sec:rat}

Let $F$ be an algebraic number field. Keepint the notation from \S4.1, we define
the height of an $F$-rational vector $x=(x_1,\dots,x_d)\in F^d$  by
$$
H(x)=\prod_{v\in V} H_v(x),
$$
where the local heights $H_v$ are defined as in (\ref{eq:H1})--(\ref{eq:H2}).
For example, if $F=\mathbb{Q}$ and $x\in\mathbb{Q}^\times\cdot(x_1,\ldots,x_d)$
where $x_1,\ldots,x_d\in\mathbb{Z}$ and $\gcd(x_1,\ldots,x_d)=1$, then
$$
H(x)=\left(|x_1|^2+\cdots + |x_d|^2 \right)^{1/2}.
$$
The number of rational points with bounded height lying on a projective variety is finite,
and one of the fundamental problems in arithmetic geometry is to
determine its asymptotics (see, for instance, \cite{ts}).

Let ${\sf G}\subset\hbox{GL}_m$ be a semisimple algebraic group defined over $F$. Then the cardinality of the set $\{\gamma\in {\sf G}(F):\, H(\gamma)\le T\}$ is finite,
and we are interested in its asymptotic as $T\to\infty$.
The set ${\sf G}(F)$ embeds discretely in the group ${\sf G}(\mathbb{A})$
of adeles as a subgroup of finite covolume, and the height $H$ extends to ${\sf G}(\mathbb{A})$.
We set 
\begin{equation}\label{eq:B_t}
B_T=\{g\in {\sf G}(\mathbb{A}):\, H(g)\le T\}.
\end{equation}
To state our main result, we note that it follows from \cite{CL} that 
provided ${\sf G}$ is simply connected and $F$-simple,
the representation $\pi^0_{{\sf G}(\mathbb{A})/{\sf G}(F)}$ in 
$L^2_0({\sf G}(\mathbb{A})/{\sf G}(F))$ is $L^{p+}$ for some $p>0$
(we will explain this in detail in the proof of Theorem \ref{rational points} below).

\begin{theorem}\label{rational points}
Assume that the group ${\sf G}$ is simply connected and $F$-simple.
 Then
$$
|{\sf G}(F)\cap B_T|=\frac{m_{{\sf G}(\Adele)}(B_T)}{m_{{\sf G}(\Adele)}({\sf G}(\mathbb{A})/{\sf
    G}(F))}+O_\eta \left(m_{{\sf G}(\Adele)}(B_T)^{1-(2n_e(p))^{-1} a /(2d+2a)+\eta}\right)
$$
for every $\eta>0$, where $a$ is the H\"older exponent of the family $\{B_{e^t}\}$
and $d=\sum_{v\in V_\infty}\dim_\RR{\sf G}(F_v)$.
\end{theorem}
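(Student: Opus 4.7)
The plan is to follow the general recipe of Section \ref{sec:ex} applied to the lattice ${\sf G}(F) \subset {\sf G}(\Adele)$, which is a lattice of finite covolume by Borel--Harish-Chandra, and then to invoke Theorem \ref{error estimate}. This reduces the task to establishing two independent ingredients for the family $\{B_T\}$: a stable quantitative mean ergodic theorem for the Haar-uniform averages $\beta_T$ acting on $L^2_0({\sf G}(\Adele)/{\sf G}(F))$, and H\"older well-roundedness of $\{B_{e^t}\}$ with respect to a neighbourhood basis of finite upper local dimension.

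For the geometric ingredient, I would take
\[
\mathcal{O}_\vre = \prod_{v \in V_\infty} U_\vre^v \;\times\; \prod_{v \in V_f} K_v,
\]
where $U_\vre^v = \{g \in {\sf G}(F_v) : H_v(\rho_v(g^{\pm 1}) - \mathrm{id}) < \vre\}$ at Archimedean places, exactly as in the proof of Theorem \ref{th:S-arith}, and $K_v$ is a good maximal compact open subgroup of ${\sf G}(F_v)$ at each finite place (with $K_v = {\sf G}(\mathcal{O}_v)$ for almost all $v$). Only the Archimedean factors contribute nontrivially, so the upper local dimension is $\rho = d = \sum_{v \in V_\infty} \dim_\RR {\sf G}(F_v)$. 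The submultiplicativity $H_v(x_1 x_2) \le H_v(x_1) H_v(x_2)$ at Archimedean places together with $K_v$-bi-invariance of $H_v$ at finite places gives $\mathcal{O}_\vre B_T \mathcal{O}_\vre \subset B_{(1+\vre)^{2|V_\infty|} T}$ and $B_T^-(\vre) \supset B_{(1+\vre)^{-2|V_\infty|} T}$. Combining these inclusions with uniform H\"older continuity of $t \mapsto \log m(B_{e^t})$ (proved along the lines of \cite[Thm.~7.19]{GN}, adapted to the adelic setting) yields H\"older admissibility, and in particular H\"older well-roundedness, with some exponent $a > 0$.

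The spectral ingredient is the main point. Since ${\sf G}$ is simply connected and $F$-simple, Clozel's resolution of property $(\tau)$ for automorphic representations, combined with uniform local $L^p$-bounds on nontrivial matrix coefficients, yields $\pi^0_{{\sf G}(\Adele)/{\sf G}(F)} \in L^{p+}$ for some $p = p({\sf G}, F)$. One then seeks to mimic the proof of Theorem \ref{semisimple mean} in the adelic setting: Jensen's inequality applied to $n_e(p)$-fold tensor products reduces $\|\pi^0(\beta)\|$ to $\|(\pi^0)^{\otimes n_e}(\beta)\|^{1/n_e}$, the tensor power is $L^{2+}$ and is hence weakly contained in the regular representation of ${\sf G}(\Adele)$ by \cite{CHH}, and one is left with the task of controlling $\|\lambda_{{\sf G}(\Adele)}(\beta)\|$ by $\|\beta\|_q$ for some $q \in [1,2)$. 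The main obstacle is precisely this last step, namely establishing an \emph{adelic Kunze--Stein inequality}: one must combine Cowling's Archimedean inequality \cite{Co1} with Veca's non-Archimedean inequality \cite{V} uniformly over the restricted direct product of infinitely many places, which the simple product argument for finitely many factors used in Theorem \ref{semisimple mean} does not accomplish without modification. The weaker constant one expects to extract adelically is what is responsible for the denominator $2d + 2a$ appearing in the error exponent, compared with $d + a$ in Theorem \ref{th:S-arith}.

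Once the adelic Kunze--Stein bound is in place, it produces a stable mean ergodic rate $E(T) = m(B_T)^{-\kappa + \eta}$ on $L^2_0({\sf G}(\Adele)/{\sf G}(F))$, and Theorem \ref{error estimate}, applied with $\vre_0 = \vre_0(e, {\sf G}(F)) > 0$, converts this together with the H\"older well-roundedness and local dimension computed above into the stated counting estimate with error $m(B_T)^{1 - (2n_e(p))^{-1} a/(2d + 2a) + \eta}$.
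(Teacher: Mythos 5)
Your high-level plan is correct and follows the paper's recipe: you identify both required ingredients (H\"older well-roundedness of $\{B_{e^t}\}$, verified via submultiplicativity of the height and a volume regularity estimate; and a stable quantitative mean ergodic theorem on $L^2_0({\sf G}(\Adele)/{\sf G}(F))$), and you correctly isolate the adelic Kunze--Stein inequality as the central obstacle. But you leave precisely that obstacle as a gap, and your gesture toward its resolution is inaccurate in a way that obscures the actual mechanism.

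The key point the paper establishes (Proposition \ref{p:xi}) is that the Harish-Chandra function $\Xi_{{\sf G}(\Adele)} = \prod_v \Xi_{G_v}$ lies in $L^{4+\vre}({\sf G}(\Adele))$ but \emph{not} in $L^{2+\vre}$: the bound $\int_{G_v}\Xi_{G_v}^p\,dm_v = 1 + O(q_v^{1-p/2})$ at almost every finite place makes the infinite product over $v\in V_f$ diverge for $p\le 4$ (because of the pole of the Dedekind zeta function at $s=1$), and Proposition \ref{optimal} shows $4$ is sharp already for $\hbox{PGL}_2$. Feeding this into Herz/Cowling's argument yields an adelic Kunze--Stein inequality (Theorem \ref{th:ks}) that is valid only for $q\in[1,4/3)$, the exponent dual to $4$, rather than $q\in[1,2)$. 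That is what gives the mean ergodic rate $m_G(B_T)^{-(4n_e(p))^{-1}+\eta}$ instead of $m_G(B_T)^{-(2n_e(p))^{-1}+\eta}$, and hence the denominator $2d+2a = 2(d+a)$ in the final exponent. Your explanation attributes this to a ``weaker constant''; it is not a constant issue but a genuinely restricted range of $q$, and without identifying the $L^4$-integrability of $\Xi$ your argument cannot reach the stated exponent. A secondary point: you assert that the $n_e$-th tensor power of $\pi^0$ ``is $L^{2+}$ and hence weakly contained in $\lambda_{{\sf G}(\Adele)}$ by CHH,'' but it is not clear a priori that the infinite restricted tensor product of local $L^{2+}$ bounds gives an adelic $L^{2+}$-representation. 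The paper instead verifies the weak containment directly via Flath's restricted tensor decomposition, Clozel's uniform local $L^p$ bounds, and the Bekka--Cowling theorem on weak containment of restricted tensor products; this local-to-global argument is needed and cannot be replaced by a blanket appeal to CHH.
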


\begin{remark}
Let us note the following regarding Theorem \ref{rational points} (compare with Remark \ref{rem:un}). 
\begin{enumerate}
\item If the height $\prod_{v\in V_\infty} H_v$ is bi-invariant under 
a maximal compact subgroup $K$, then $d$ in the error estimate
can be replaced by 
$d-\dim_\RR K$. 
\item 
If in addition the local heights $H_v$ are each bi-invariant under a special maximal
compact subgroup $K_v$ of $G_v$, the error term in Theorem \ref{rational points} can be improved by  replacing $2n_e(p)$ by $p$, provided the $L^{p^+}$-spectrum is uniformly bounded in the sense of 
\cite[\S 8.1]{GN}.
\item If ${\sf G}\subset \hbox{GL}_m$ is self-adjoint (namely invariant under transpose)
then the family $B_{e^t}$ are Lipschitz well-rounded. Indeed, 
this is clear when the height $\prod_{v\in V_\infty} H_v$ is constant on $G_\infty$. Otherwise, we have
Lipschitz estimate at the Archimedean places \cite[Proposition~7.5]{GN},  and \cite[Theorem 3.15]{GN} (or the argument 
in \cite[Proposition~2.19(2)]{GO2}) yield the desired Lipschitz estimate. Then in Theorem \ref{rational points},
one can set $a=1$. 
\end{enumerate}
\end{remark}

We note that the main term of the asymptotics of the number of rational points on semisimple group varieties was computed in \cite{STBT2} (using direct spectral expansion of the automorphic kernel) and in \cite{GMO} (using mixing). However, as noted in \S 1.3, these methods do not produce an error term of  the same quality as Theorem \ref{rational points}.



\subsection{The structure of semisimple adele groups}

The adele group $G={\sf G}(\Adele)$ is defined as the 
direct product $G=G_\infty \times G_f$, where 
$G_\infty=\prod_{v\in V_\infty} G_v$, and 
$G_f$ is the restricted direct product 
${\prod}^\prime_{v\in V_f} (G_v,K_v)$
 of the locally compact 
groups ${\sf G}(F_v)$ w.r.t. the compact 
open subgroups $K_v$, which for almost all $v\in V_f$ satisfies 
$K_v={\sf G}(\mathcal{O}_v)$ (for a fixed integral model for ${\sf G}$). 
Thus each element of the restricted direct product $G_f$ can be 
identified with a sequence $(g_v)_{v\in V_f}$, such that 
$g_v\in {\sf G}(\mathcal{O}_v)$ for almost every $v\in V_f$, and 
$G$ is a locally compact $\sigma$-compact group. 
We recall that 
if we choose Haar measures $m_v$ 
on each $G_v$, normalised so that $m_v(K_v)=1$  for $v\in V_f$, 
and define 
the measure $m_{G_f}$ via 
the construction of restricted product of measure spaces, namely  
$$\left(G_f, {\prod}_{v\in V_f}K_v,m_{G_f}\right)=
{\prod}^\prime_{v\in V_f} (G_v, K_v, m_v),$$ 
then $m_{G_f}$ is a Haar measure 
on $G_f$ (see \cite{Bl} and \cite{Mo} for further details on this construction).  
Haar measure on $G=G_\infty\times G_f$ is then the direct product  $m_{G_\infty}\times m_{G_f}$.

Now assume that ${\sf G}$ is a semisimple simply connected algebraic group defined over $F$.
We choose the  family of subgroups $K_v$ so that an analogue of the Iwasawa decomposition holds
for $G$. Recall that by \cite{t},
for almost all $v$, ${\sf G}(\mathcal{O}_v)$ is hyperspecial maximal compact subgroup
of $G_v$. For every $v\in V$, we fix a maximal compact subgroup $K_v$ of $G_v$
so that $K_v$ is special for all $v\in V_f$ and $K_v={\sf G}(\mathcal{O}_v)$ for almost all $v$.
Then for every $v$, the Iwasawa decomposition $G_v=K_vP_v$ holds where $P_v$ is a closed
amenable subgroup given by $P_v={\sf Z}(K_v){\sf U}(K_v)$
where $\sf Z$ is the centraliser of a suitable maximal $F_v$-split torus in $\sf G$,
and $\sf U$ is the subgroup generated by positive root groups (see \cite{t} in the non-Archimedean case).
Setting $K=\prod_{v\in V} K_v$ and $P={\prod}_{v\in V}' (P_v, P_v\cap K_v)$, we have
the Iwasawa decomposition $G=KP$ for the adele group. 
For $g\in G$, we denote by $p(g)$ the $P$-component of $g$ with respect to the Iwasawa decomposition. 
The element $p(g)$ is well-defined modulo $P\cap K$, and the modular function of $P$ is constant on each coset of $P\cap K$.

\subsection{Harish-Chandra function} 
The Harish-Chandra function is the $K$-bi-invariant function on $G$ defined by
$$
\Xi_G(g)=\int_K \Delta_P(p(gk))^{-1/2}\,dk
$$
where $\Delta_P$ is the modular function of $P$.

$\Xi_G$ plays fundamental role in analysis on semisimple groups over the adeles. 
First, let us note that $\Xi_G(g)=\prod_{v\in V} \Xi_{G_v}(g_v)$, since $\Delta_P(p)=\prod_{v\in V}\Delta_{P_v}(p_v)$, $K=\prod_{v\in V} K_v$, and Haar probability measure on $K$ is the product of the Haar probability measures on $K_v$, $v\in V$. 
Second, note that the Cowling--Haagerup--Howe argument \cite{CHH}, which is valid 
for every group with an Iwasawa 
decomposition (see \cite[\S 5.1]{GN}), shows that 
matrix coefficients  of $K$-finite vectors of a unitary representation $\pi$ 
which is weakly contained in the 
regular representation are estimated by
\begin{equation}\label{eq:CHH}
\inn{\pi(g)\xi,\eta}\le \sqrt{(\dim \left< K\xi\right>) (\dim \left< K\eta\right>)}
\norm{\xi}\norm{\eta}\Xi_G(g)
\end{equation}
Although the Harish-Chandra function for semisimple groups over local fields is in $L^{2+\vre}(G)$
for every $\vre>0$, this is no longer the case for the group of adele points. Instead, we have the following   result. 

\begin{proposition}\label{p:xi}{\bf Integrability of the Harish Chandra function on adele groups.}
Keeping the assumption and notation of the previous subsection, $\Xi_G\in L^{4+\vre}(G)$ for every $\vre>0$, where $G={\sf G}(\Adele)$.
\end{proposition}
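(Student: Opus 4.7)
The plan is to exploit the factorisation $\Xi_G(g) = \prod_{v \in V} \Xi_{G_v}(g_v)$ noted just before the statement, together with the restricted-product structure of Haar measure on $G = {\sf G}(\Adele)$, in order to reduce the question to convergence of an Euler product:
\begin{equation*}
\int_G \Xi_G^p\, dm_G \;=\; \prod_{v \in V} \int_{G_v} \Xi_{G_v}^p\, dm_v.
\end{equation*}
The finitely many Archimedean factors are individually finite for any $p > 2$ by the classical Harish--Chandra estimate $\Xi_{G_v} \in L^{2+\delta}(G_v)$, and any finite list of ramified non-Archimedean places contributes only a finite constant. So it remains to establish convergence of the tail of the Euler product over the cofinite set of places $v \in V_f$ at which $G_v$ is unramified and $K_v = {\sf G}(\mathcal{O}_v)$ is a hyperspecial maximal compact subgroup with $m_v(K_v) = 1$.

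At such a place I would use $\Xi_{G_v}\equiv 1$ on $K_v$ to write
\begin{equation*}
\int_{G_v} \Xi_{G_v}^p\, dm_v \;=\; 1 + \int_{G_v \setminus K_v} \Xi_{G_v}^p\, dm_v,
\end{equation*}
and bound the remainder using the Cartan decomposition $G_v \setminus K_v = \bigsqcup_{\lambda \neq 0} K_v a_\lambda K_v$ indexed by nonzero dominant cocharacters $\lambda$ of a maximal $F_v$-split torus. Macdonald's explicit formula for the spherical function on an unramified $p$-adic reductive group, combined with the standard double-coset volume estimate $m_v(K_v a_\lambda K_v) \asymp q_v^{2\langle \rho, \lambda\rangle}$, gives a bound on each summand of the form $C(1+\|\lambda\|)^{rp}\,q_v^{-(p/2-1)\,\ell(\lambda)}$, where $r$ is the split rank and $\ell(\lambda)$ is a positive integer-valued function on the dominant cone satisfying $\ell(\lambda_0) \geq 1$ at the minimal nonzero dominant cocharacter (with equality in the worst case, namely type $A_1$). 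Summing the resulting geometric series in $\lambda$, the remainder is $O(q_v^{-(p/2-1)})$ as $q_v \to \infty$.

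For $p = 4 + \vre$ this remainder becomes $O(q_v^{-1-\vre/2})$, and the Euler product $\prod_{v \in V_f}(1 + O(q_v^{-1-\vre/2}))$ converges by direct comparison with the Dedekind zeta value $\zeta_F(1 + \vre/2) < \infty$, giving $\Xi_G \in L^{4+\vre}(G)$ as claimed. The main technical obstacle is to extract the uniform local estimate above from Macdonald's formula, which expresses $\Xi_{G_v}(a_\lambda)$ as a Weyl-group sum with coefficients involving the $c$-function $\prod_{\alpha > 0}(1-q_v^{-1}\alpha(\cdot)^{-1})/(1-\alpha(\cdot)^{-1})$, singular at the spherical trivial parameter (which is what produces the polynomial-in-$\|\lambda\|$ prefactor). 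One must carefully track the cancellations and confirm that the implicit constants are uniformly bounded as $v$ varies over unramified places, in order to recover the exponent $-(p/2-1)$ in $q_v$ rather than an inferior rate.
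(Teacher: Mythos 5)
Your overall strategy is structurally identical to the paper's: factorise $\Xi_G$ as an Euler product of local Harish-Chandra functions, reduce to convergence of the tail over the cofinite set of unramified places, expand each local integral over the Cartan decomposition, show each local factor is $1 + O(q_v^{-s})$ with $s > 1$, and conclude by comparison with the Dedekind zeta function. Where you differ is in the key local input. The paper bounds $\Xi_{G_v}$ by the elementary Silberger estimate $\Xi_{G_v}(a) \le c_\vre\, \Delta_{P_v}(a)^{-1/2+\vre}$ (its Theorem 4.2.1) together with the elementary volume bound $m_{G_v}(K_v a K_v) \le c\,\Delta_{P_v}(a)$; the crucial point — noted explicitly in the paper — is that $c_\vre$ and $c$ are visibly uniform in $v$ from Silberger's proofs. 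You propose to use Macdonald's explicit formula for the spherical function at the singular parameter $\rho$ instead, which gives a sharper per-term bound with a polynomial prefactor $(1+\|\lambda\|)^{rp}$, but you correctly identify the main technical obstacle: the Macdonald $c$-function is singular at this parameter, and one must track the resulting L'H\^opital-type limits carefully to confirm that the constants are uniform over $v$. This is exactly the difficulty that the Silberger route avoids, and it is why the paper's argument is simpler. Your approach would work, but it is technically heavier for the same endpoint.

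One slip: you say $\ell(\lambda_0) = 1$ with equality ``in the worst case, namely type $A_1$.'' That is what happens for $\mathrm{PGL}_2$ (which is the group used in the paper's Proposition \ref{optimal} to show that $4$ is optimal), but $\mathrm{PGL}_2$ is not simply connected, which is the standing assumption of \S6.1 inherited here. For simply connected $\mathrm{SL}_2$ the cocharacter lattice is the coroot lattice and one gets $\ell(\lambda_0) = 2\langle\rho,\alpha^\vee\rangle = 2$. Since your argument only uses the lower bound $\ell(\lambda_0) \ge 1$ (which indeed holds, indeed $\ge 2$), the final estimate still comes out correct, but the parenthetical attribution of sharpness to simply connected type $A_1$ is not accurate.
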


Let us immediately note that for general simple groups over the adeles the exponent $4$ is the best
possible. In particular such groups do not satisfy the standard Kunze-Stein inequality, which requires
that the integrability exponent be equal to $2$, but only a weaker version of it (see Theorem \ref{th:ks}
below).  

\begin{proposition}\label{optimal}{\bf Optimality of the integrability exponent.}
The exponent $4$ in Proposition \ref{p:xi} is optimal for ${\sf G}=\hbox{\rm PGL}_2$.
\end{proposition}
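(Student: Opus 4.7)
The plan is to exploit the factorization $\Xi_G = \prod_v \Xi_{G_v}$, which follows from $K = \prod_v K_v$, $\Delta_P = \prod_v \Delta_{P_v}$, and Fubini, together with the construction of Haar measure on $G = \hbox{PGL}_2(\Adele)$ as a restricted product of local Haar measures normalized by $m_v(K_v) = 1$. Since each $\Xi_{G_v}$ is $K_v$-bi-invariant with $\Xi_{G_v}(e) = 1$, the restricted product formula gives
\begin{equation*}
\int_G \Xi_G^p\, dm_G \;=\; \prod_{v\in V} \int_{G_v} \Xi_{G_v}^p\, dm_v,
\end{equation*}
in the sense that finiteness of the left-hand side is equivalent to convergence of the infinite product. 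It therefore suffices to show that this product diverges at $p=4$.

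At a finite place $v$ with residue cardinality $q_v$, I would use the Cartan decomposition $G_v = K_v A_v^+ K_v$ with $a_n = \mathrm{diag}(s_v^n, 1)$ and the classical explicit formula for the spherical function of $\hbox{PGL}_2(F_v)$ (e.g.\ Macdonald's formula), which yields
\begin{equation*}
\Xi_{G_v}(a_n) \;=\; \frac{(1+q_v^{-1}) + n(1-q_v^{-1})}{1+q_v^{-1}}\, q_v^{-n/2},
\qquad m_v(K_v a_n K_v) \;=\; (1+q_v^{-1})\, q_v^{n} \text{ for } n\ge 1.
\end{equation*}
Summing gives
\begin{equation*}
\int_{G_v} \Xi_{G_v}^p\, dm_v \;=\; 1 \;+\; \sum_{n\ge 1}(1+q_v^{-1})\, q_v^{n}\Xi_{G_v}(a_n)^p \;=\; 1 \;+\; c_{v,p}\, q_v^{1-p/2} \;+\; O\!\left(q_v^{2(1-p/2)}\right),
\end{equation*}
with constants $c_{v,p}$ bounded away from $0$ and $\infty$ uniformly in $v$ as soon as $q_v \ge q_0$.

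Taking logarithms and summing over finite places, the convergence of $\prod_{v\in V_f}\int_{G_v}\Xi_{G_v}^p\, dm_v$ becomes equivalent to the convergence of $\sum_{v\in V_f} q_v^{1-p/2}$. By comparison with $\sum_{\ell \text{ prime}} \ell^{1-p/2}$ (there are at most $[F:\QQ]$ places of $F$ over each rational prime, and $q_v \ge \ell$ for such $v$), this series converges iff $p > 4$ and diverges for $p \le 4$. Since the Archimedean factor $\int_{G_\infty}\Xi_{G_\infty}^p\, dm_\infty$ is finite for every $p > 2$ (the standard Harish-Chandra estimate on $\hbox{PGL}_2(\RR)$), we conclude $\Xi_G \notin L^4(G)$, so the exponent $4$ in Proposition \ref{p:xi} cannot be lowered.

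The only genuine subtlety is the bookkeeping for the restricted-product integral and the uniform-in-$v$ control of the constants $c_{v,p}$; both are routine once one writes down Macdonald's formula explicitly. No hard analytic estimate is needed beyond the divergence of $\sum_\ell 1/\ell$.
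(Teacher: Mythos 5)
Your proof is correct and follows essentially the same approach as the paper: factor the integral over the restricted product, observe that at each finite place the $n=1$ Cartan double coset contributes a term of size $\asymp q_v^{1-p/2}$ to $\int_{G_v}\Xi_{G_v}^p\,dm_v$, and then conclude divergence of the infinite product at $p=4$. The one cosmetic difference is that you invoke the exact Macdonald formula for $\Xi_{G_v}(a_n)$ and $m_v(K_v a_n K_v)$, whereas the paper only uses one-sided lower bounds $\Xi_v(a_v^n)\ge c_1 q_v^{-n/2}$ and $m_v(K_v a_v^n K_v)\ge c_2 q_v^n$; both suffice.

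There is a small gap, though, in the last comparison. The observation that there are at most $[F:\QQ]$ places over each rational prime and $q_v\ge \ell$ gives the \emph{upper} bound $\sum_v q_v^{1-p/2}\le [F:\QQ]\sum_\ell \ell^{1-p/2}$, which proves convergence for $p>4$ but does \emph{not} prove divergence at $p=4$ for a general number field $F$ (indeed $q_v$ can be as large as $\ell^{[F:\QQ]}$, so the inequality runs the wrong way). What you actually need is $\sum_{v\in V_f} q_v^{-1}=\infty$. The cleanest justification --- and the one the paper uses --- is that the Dedekind zeta function $\zeta_F(s)=\prod_v(1-q_v^{-s})^{-1}$ has a pole at $s=1$, so its Euler product (hence $\sum_v q_v^{-1}$) diverges there. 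Alternatively one could restrict to rational primes that split completely in $F$, which have positive Dirichlet density by Chebotarev. Either fix closes the gap; the rest of the argument stands as written.
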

\begin{proof}
Let $a_v=\hbox{diag}(s_v,1)$ where $s_v$ denotes the uniformiser of $F_v$. 
We have the Cartan decomposition 
$$
G_v=K_v\, \{a_v^n\}_{n\ge 0}\,K_v
$$
and 
the decomposition for the Harish-Chandra function
$$
\Xi(g)=\prod_{v\in V} \Xi_{v}(g_v)
$$
where $\Xi_v$'s are the Harish-Chandra functions of $G_v$'s.
Using the estimates
\begin{align*}
\Xi_v(a_v^n)\ge c_1 q_v^{-n/2}\quad\hbox{and}\quad \vol(K_v a_v^n K_v)\ge c_2 q_v^n
\end{align*} 
with some $c_1,c_2>0$,
we conclude that for $p>2$,
$$
\int_{G_v} \Xi_{v}^p\, dm_v\ge 1+\sum_{n\ge 1} (c_1^p c_2)q_v^{-pn/2+n}\ge 1+c_3q_v^{1-p/2}
$$
for some $c_3>0$. Since the Dedekind zeta function $\prod_v (1-q_v^{-s})^{-1}$ has a pole at $s=1$, it follows that
the product
$\prod_{v\in V_f} \int_{G_v} \Xi_{v}^p \,dm_v$
diverges when $p\le 4$.
\end{proof}

\begin{proof}[Proof of Proposition \ref{p:xi}]
We have
$$
\Xi_{G}(g)=\prod_{v\in V} \Xi_{G_v}(g_v),\quad g=(g_v)\in G.
$$
It is well-known that the local Harish-Chandra functions $\Xi_{G_v}$ are in $L^{2+\vre}(G_v)$ for every $\vre>0$.
Since $\Xi_{G_v}$ are all bounded by $1$, it suffices to prove that for some finite $V_0\subset V$ containing the Archimedean places,
the function $\prod_{v\notin V_0} \Xi_{G_v}$ is in $L^p(G)$ for $p>4$.

In the proof, we use the explicit description of Cartan decomposition over non-Archimedean fields,
 which we now briefly recall (see \cite{t} for details).
Since $K_v$ can be assume to be special, $G_v=K_v{\sf Z}(F_v)K_v$ where
$\sf Z$ is the centraliser of a suitable $F_v$-split torus $\sf S$ in $\sf G$.
Moreover, for almost all $v$, ${\sf G}$ is split over an unramified extension of $F_v$,
so that $G_v=K_v{\sf S}(F_v)K_v$. 
We assume that this decomposition holds for all $v\notin V_0$.
Let $\Pi_v$ be the set of simple roots for ${\sf S}(F_v)$ and
$$S_v^+=\{s\in {\sf S}(F_v):\, |\chi(s)|_v\ge  1\hbox{ for $\chi\in \Pi_v$}\}.$$
Then we also have $G_v=K_v S_v^+ K_v$. 
We will use the following basic bound for the Harish-Chandra function:
\begin{equation}\label{Xi estimate}
\Xi_{G_v}(a_v)\le c_\vre\, \Delta_{P_v}(a_v)^{-1/2+\vre},\quad a_v\in S_v^+,\; \vre>0, 
\end{equation}
where $\Delta_{P_v}$ is the modular function of the group $P_v$
(see \cite[Thm. 4.2.1]{Si}). It is clear from the proof in \cite{Si}
that the constant $c_\vre>0$ can be chosen to be bounded uniformly in $v$. We have
$$
\Delta_{P_v}(a_v)=\prod_{\chi\in\Pi_v} |\chi (a_v)|_v^{ n_{\chi,v}}
$$
for some strictly positive integers $n_{\chi,v}\in\mathbb{N}$.
The Haar measure  of the double coset $K_v a_v K_v$ (subject to 
the usual normalisation $m_{G_v}(K_v)=1$ for $v\in V_f)$ satisfies the bound 
\begin{equation}\label{double coset measure}
m_{G_v}(K_v a_v K_v)\le c\, \Delta_{P_v}(a_v)\,\,, \mbox{   where $c>0$ is independent of $v$}. 
\end{equation}
Indeed, the estimate follows 
from the elementary proof 
of \cite[Theorem~4.1.1]{Si}, which also makes it plain that the constant $c$ is independent of $v$. 

To prove the main estimate we combine (\ref{Xi estimate}) and (\ref{double coset measure}), and for $p>4$ we obtain:
\begin{align*}
\int_{G_v} \Xi_{G_v}(g_v)^p\,dm_v(g_v) &=\sum_{a_v\in K_v\backslash G_v/K_v} \Xi_{G_v}(a_v)^p m_{G_v}(K_v a_v K_v)\\
&\le 1 +\sum_{a_v\in K_v\backslash G_v/K_v-K_v} (cc^p_\vre)\Delta_{P_v}(a_v)^{p(-1/2+\vre)+1} \\
&\le 1 +\sum_{a_v\in K_v\backslash G_v/K_v-K_v} (cc^p_\vre)\left(\prod_{\chi\in\Pi_v} |\chi (a_v)|_v\right)^{p(-1/2+\vre)+1} \\
&\le 1+\sum_{i_1,\ldots, i_r\in \mathbb{Z}_+, (i_1,\ldots, i_r)\ne
  0}(cc^p_\vre)q_v^{ (p(-1/2+\vre)+1)\sum_{j=1}^r i_j}\\
&=1+O_\vre\left(q_v^{p(-1/2+\vre)+1}\right).
\end{align*}
Since the Dedekind zeta function converges absolutely for $s>1$,
it also follows that  $\sum_{v\in V_f} q_v^{-s}<\infty$. 
Hence, $\prod_{v\notin V_0} \int_{G_v}  \Xi_{G_v}^p\,dm_v<\infty$,
as required.
\end{proof}
\begin{remark}
Regarding the estimate (\ref{double coset measure}), we note that an exact formula for the measure of a double coset was established for 
split simply connected groups in  \cite{Gr} as part of the discussion of the Satake transform. 
The fact that we may take $c=1+\frac{c_1}{q_v}$ is established in \cite[Lemma~6.11]{STBT2}
for adjoint groups, but only as a consequence of the computation of the integral of the local 
height function, which is less elementary. 
\end{remark}
\subsection{Analogue of the radial Kunze-Stein phenomenon on adele groups }
To complete the proof of Theorem \ref{rational points} we need, according to our general recipe from \S 3.1, to prove a stable quantitative mean ergodic
theorem for the Haar-uniform averages supported on the sets $B_T$. Our first step towards this goal is to establish a version the radial Kunze--Stein inequality for adele group, which is of considerable independent interest. 
\begin{theorem}\label{th:ks}
Let ${\sf G}$ be as in Theorem \ref{rational points}, and $G={\sf G}(\Adele)$. 
Let $q\in [1,4/3)$. Then for every absolutely continuous
bi-$K$-invariant probability measure $\beta$ such that $\|\beta\|_q<\infty$  and $f\in L^2(G)$,
$$
\norm{\beta * f}_2\le C_q \norm{\beta}_q\,\norm{f}_2.
$$
\end{theorem}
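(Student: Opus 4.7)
\textbf{Proof plan for Theorem \ref{th:ks}.}

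The strategy is to reduce the convolution inequality to an operator norm bound on the regular representation and then exploit the special bi-$K$-invariance of $\beta$ via spherical analysis, together with the integrability estimate on $\Xi_G$ furnished by Proposition~\ref{p:xi}. First, since $\|\beta*f\|_2\le \|\lambda_G(\beta)\|_{\mathrm{op}}\,\|f\|_2$, it suffices to prove
\begin{equation*}
\|\lambda_G(\beta)\|_{\mathrm{op}}\le \int_G \beta(g)\,\Xi_G(g)\,dm_G(g),
\end{equation*}
after which Hölder's inequality with $1/q+1/q'=1$ gives $\|\lambda_G(\beta)\|_{\mathrm{op}}\le \|\beta\|_q\,\|\Xi_G\|_{q'}$. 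By Proposition~\ref{p:xi}, $\Xi_G\in L^{4+\eta}(G)$ for every $\eta>0$, so $\|\Xi_G\|_{q'}<\infty$ precisely when $q'>4$, i.e.\ $q<4/3$, and we set $C_q:=\|\Xi_G\|_{q'}$.

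To establish the key estimate $\|\lambda_G(\beta)\|_{\mathrm{op}}\le \int \beta\,\Xi_G\,dm_G$, I would use the standard $C^*$-algebraic identification $\|\lambda_G(\beta)\|_{\mathrm{op}}=\sup_\pi\|\pi(\beta)\|$, the supremum being taken over tempered irreducible unitary representations of $G$. Since $\beta$ is bi-$K$-invariant with $m_G(K)=1$ (so that $\beta=\chi_K*\beta*\chi_K$), we have $\pi(\beta)=\pi(\chi_K)\pi(\beta)\pi(\chi_K)$, where $\pi(\chi_K)$ is the orthogonal projection onto $V_\pi^K$. Consequently, $\pi(\beta)=0$ whenever $\pi$ is non-spherical, so only spherical tempered $\pi$ contribute. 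For these, $(G,K)$ being a Gelfand pair gives $\dim V_\pi^K=1$; choosing a unit $K$-fixed vector $v_\pi$ we get $\pi(\beta)=\hat\beta(\pi)\,P_{v_\pi}$, where $\hat\beta(\pi)=\int_G \beta(g)\,\phi_\pi(g)\,dm_G(g)$ and $\phi_\pi(g)=\langle\pi(g)v_\pi,v_\pi\rangle$ is the spherical function. Thus $\|\pi(\beta)\|=|\hat\beta(\pi)|$.

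The decisive ingredient is the pointwise majorization $|\phi_\pi(g)|\le\Xi_G(g)$ for every spherical tempered $\pi$. This follows by applying the Cowling--Haagerup--Howe inequality \eqref{eq:CHH} to $\xi=\eta=v_\pi$, which has $\dim\langle Kv_\pi\rangle=1$; the proof of \eqref{eq:CHH} uses only the Iwasawa decomposition, which is available on the adele group $G={\sf G}(\Adele)$ as recalled in Section~5.1. Equivalently, one can invoke the restricted tensor product decomposition $\pi=\bigotimes'_v\pi_v$ of any irreducible spherical representation of $G$, whereupon $\phi_\pi=\prod_v\phi_{\pi_v}$, each local factor is bounded by $\Xi_{G_v}$ by the classical local CHH, and $\prod_v\Xi_{G_v}=\Xi_G$. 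Taking $\sup_\pi|\hat\beta(\pi)|\le \int\beta\,\Xi_G\,dm_G$ and combining with the Hölder step above yields the theorem.

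The main obstacle, I expect, is the step $|\phi_\pi|\le\Xi_G$ uniformly in $\pi$. Running the CHH argument directly on the adele group requires a careful inspection of the Iwasawa decomposition $G=KP$ for the restricted-product structure, together with the compatibility of the modular function $\Delta_P=\prod_v\Delta_{P_v}$ used in defining $\Xi_G$; alternatively, the tensor-product route needs the theorem that irreducible spherical tempered representations of $G$ factor as restricted tensor products with local factors that are themselves spherical and tempered. Once these structural facts are in place, the rest of the argument is a routine combination of spherical Plancherel, Hölder's inequality, and Proposition~\ref{p:xi}.
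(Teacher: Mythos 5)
Your proposal is correct and follows the same route the paper uses: the paper's proof simply invokes Herz's \emph{principe de majoration} as presented by Cowling \cite{Co2}, noting only that the integrability exponent of $\Xi_G$ changes from $2$ to $4$. What you have written out --- reduce to $\|\lambda_G(\beta)\|$, identify it with $\sup_\pi\|\pi(\beta)\|$ over tempered irreducibles, observe that bi-$K$-invariance and the Gelfand-pair property confine attention to the spherical transform, bound $|\phi_\pi|$ by $\Xi_G$ via the CHH inequality (available because $G={\sf G}(\Adele)$ has an Iwasawa decomposition), and close with H\"older and Proposition~\ref{p:xi} --- is precisely that majoration argument, and the details are right.
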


\begin{proof}
Using that $G={\sf G}(\Adele)$ has an Iwasawa decomposition,
we can utilise Herz' argument as presented by Cowling \cite{Co2}.
The only difference is that the Harish-Chandra function $\Xi_{{\sf G}(\Adele)}$ is 
not in $L^{2+\vre}({\sf G}(\Adele))$, but in $L^{4+\vre}({\sf G}(\Adele))$, $\vre>0$ (see Proposition  \ref{p:xi}).
Therefore, this argument works only for $q<4/3$, which is the exponent dual to $4$.
\end{proof}

We can now establish a mean ergodic theorem with a rate for adele groups, as follows. 

\begin{corollary}\label{th:ad_mean}
Let ${\sf G}$ be as in Theorem  \ref{rational points}, $G={\sf G}(\Adele)$, and $B_T$ be the balls w.r.t. the height function. 
Let $G$ act on a standard Borel probability space $(X,\mu)$
and assume that the representation $\pi_X^0$ of $G$ on $L_0^2(X)$ is $L^{p+}$ for some $0< p < \infty$, or more generally, that $\left(\pi_X^0\right)^{\otimes n_e}$ is weakly contained in $\lambda_G$. 
Then the stable quantitative mean ergodic theorem holds in $L^2(X)$
for the Haar-uniform averages $\beta_T$ with the following estimate:
$$
\norm{\pi_X(\beta_T)f-\int_X fd\mu}_{L^2(X)}
\le C^\prime_\eta m_G(B_T)^{-(4n_e(p))^{-1}+\eta} \norm{f}_{L^2(X)},\quad \eta>0.
$$
\end{corollary}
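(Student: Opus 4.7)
The plan is to mimic the proof of Theorem \ref{semisimple mean} via the spectral transfer principle, but using the adelic radial Kunze--Stein inequality (Theorem \ref{th:ks}) in place of the standard one. Because Theorem \ref{th:ks} requires the averaging measure to be bi-$K$-invariant and only applies for $q<4/3$, the exponent in the ergodic estimate degrades from $2n_e(p)$ (in the semisimple case) to $4n_e(p)$.

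First I would apply Jensen's inequality verbatim as in the proof of Theorem \ref{semisimple mean}, obtaining the tensor-power bound
$$\|\pi_X^0(\beta_T)\| \le \|(\pi_X^0)^{\otimes n_e}(\beta_T)\|^{1/n_e}.$$
Since $\pi_X^0$ is $L^{p+}$, the choice $n_e = n_e(p)$ ensures $(\pi_X^0)^{\otimes n_e}$ is an $L^{2+}$-representation, so by the Cowling--Haagerup--Howe theorem \cite{CHH} it is weakly contained in $\lambda_G$, giving $\|(\pi_X^0)^{\otimes n_e}(\beta_T)\| \le \|\lambda_G(\beta_T)\|$. Under the more general hypothesis that $(\pi_X^0)^{\otimes n_e}$ is weakly contained in $\lambda_G$ directly, this step is immediate.

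The key next step is to observe that the Haar-uniform measure $\beta_T$ is bi-$K$-invariant for the natural adelic maximal compact $K=\prod_v K_v$, because each local height $H_v$ from (\ref{eq:H1})--(\ref{eq:H2}) is bi-$K_v$-invariant (Euclidean/Hilbert--Schmidt at Archimedean places, sup-norm at non-Archimedean places, with $\rho_v(K_v)$ sitting in the relevant orthogonal or integral matrix group). Theorem \ref{th:ks} then yields, for every $q\in[1,4/3)$,
$$\|\lambda_G(\beta_T)\| \;\le\; C_q\|\beta_T\|_q \;=\; C_q\, m_G(B_T)^{-1/q'}, \qquad \frac{1}{q}+\frac{1}{q'}=1.$$
Letting $q\to (4/3)^-$, so that $q'\to 4^+$, and absorbing the factor $1/n_e$ appearing in the exponent into an arbitrary $\eta>0$, the two displays combine to give exactly the stated rate $C'_\eta\, m_G(B_T)^{-1/(4n_e(p))+\eta}$.

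Finally, the stability required by Definition \ref{stable} is automatic: the identical argument applied to the Haar-uniform averages on $B_T^{\pm}(\varepsilon)$ yields the same bound with $m_G(B_T^{\pm}(\varepsilon))$ in place of $m_G(B_T)$, and H\"older well-roundedness of the height balls (as verified during the proof of Theorem \ref{th:S-arith}) ensures the resulting constants are uniform in $\varepsilon\in(0,\varepsilon_1)$. The principal obstacle---and the reason the exponent is $4n_e(p)$ rather than $2n_e(p)$---is the weaker integrability $\Xi_G\in L^{4+\varepsilon}$ instead of $L^{2+\varepsilon}$ (Propositions \ref{p:xi} and \ref{optimal}), which forces reliance on the radial Kunze--Stein phenomenon rather than its standard form.
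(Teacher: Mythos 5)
Your high-level strategy (spectral transfer via Jensen + Kunze--Stein, with the exponent worsened from $2n_e(p)$ to $4n_e(p)$ because $\Xi_G \in L^{4+\vre}$) is exactly the route the paper takes, and your bookkeeping of the exponents is right. However, there is a genuine gap in the ``key next step'': you assert that $\beta_T$ is automatically bi-$K$-invariant because the local heights are ``Euclidean/Hilbert--Schmidt at Archimedean places, sup-norm at non-Archimedean places, with $\rho_v(K_v)$ sitting in the relevant orthogonal or integral matrix group.'' That containment is not part of the hypotheses of Theorem \ref{rational points}. The Archimedean height $H_v$ is bi-invariant under $K_v$ only if $\rho_v(K_v)$ lies in the orthogonal group of the norm, which requires ${\sf G}$ to be self-adjoint with respect to the chosen basis---the paper lists exactly this as an \emph{extra} assumption in the remark following Theorem \ref{rational points}. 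Likewise, at the finitely many ``bad'' non-Archimedean places where $K_v \ne {\sf G}(\mathcal{O}_v)$ (or $\rho_v$ does not preserve the integral model), bi-$K_v$-invariance of $H_v$ may fail. So one cannot feed $\beta_T$ directly into Theorem \ref{th:ks}, which requires a bi-$K$-invariant kernel.

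The paper circumvents this with a short comparison argument you are missing: set $\tilde B_T = K B_T K$ and let $\tilde\beta_T$ be the uniform average on $\tilde B_T$. Apply Theorem \ref{th:ks} to $\tilde\beta_T$ (which \emph{is} bi-$K$-invariant) to obtain $\|\lambda_G(\tilde\beta_T)\| \ll_\eta m_G(\tilde B_T)^{-1/4+\eta}$. Then use positivity of convolution by nonnegative kernels to get $m_G(B_T)\|\lambda_G(\beta_T)\| \le m_G(\tilde B_T)\|\lambda_G(\tilde\beta_T)\|$, since $\chi_{B_T}\le \chi_{\tilde B_T}$. Finally, because left and right multiplication by $K$ changes the height by at most a bounded factor, $\tilde B_T \subset B_{cT}$, and the volume estimate from \cite[Section 4.3]{GMO} gives $m_G(\tilde B_T) \le C\, m_G(B_T)$. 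Combining these yields $\|\lambda_G(\beta_T)\| \le C\|\lambda_G(\tilde\beta_T)\| \ll_\eta m_G(B_T)^{-1/4+\eta}$. You should insert this reduction to the $K$-bi-averaged measure in place of your unjustified bi-$K$-invariance claim; the rest of your proof (Jensen, weak containment via \cite{CHH}, stability under perturbation by $\mathcal{O}_\vre$) is sound and matches the paper.
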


\begin{proof}
As in the proof of Theorem \ref{semisimple mean}, $\left(\pi_X^0\right)^{\otimes n_e}$ is an $L^{2+}$-representation,
and
$$
\norm{\pi_X^0(\beta_T)}\le \norm{\lambda_{G}(\beta_T)}^{1/n_e}
$$
where $\lambda_G$ denotes the regular representation of $G$.

Let $\tilde B_T=KB_TK$ and $\tilde \beta_T$ denote the uniform averages supported on $\tilde B_T$.
By Theorem \ref{th:ks},
$$
\|\tilde \beta_T * f\|_2\le C_q m_G(\tilde B_T)^{-(1-1/q)}\|f\|_2.
$$
for every $q\in [1,4/3)$ and $f\in L^2(G)$. This implies that
$$
\norm{\lambda_{G}(\tilde\beta_T)}\le C^\prime_\eta  m_{G}(\tilde B_T)^{-1/4+\eta},\quad \eta>0.
$$
Since $\tilde B_T\subset B_{cT}$ for some $c>0$,  
it follows from the volume estimate in \cite[Section~4.3]{GMO} that $m_G(\tilde B_T) \le C m_G(B_T)$.
Hence,
$$
\norm{\lambda_{G}(\beta_T)}\le C \norm{\lambda_{G}(\tilde\beta_T)},
$$
and the claim follows. 
\end{proof}

\begin{proof}[Proof of  Theorem \ref{rational points}]
Every irreducible unitary representation $\pi$ of $G$
is unitarily equivalent to a restricted tensor
product $\pi=\bigotimes_v^\prime (\pi_v,\psi_v)$. Here 
$\pi_v$ is an irreducible unitary representation of the local 
group $G_v$ (see \cite{f}), and for almost all $v\in V$, the representation $\pi_v$ is spherical, namely the space of $G(\mathcal{O}_v)$-invariant vectors has dimension one, with  
$\psi_v$ denoting a unit vector invariant under 
$G(\mathcal{O}_v)$. This follows from \cite[Lemma 6.3]{Mo}, the fact that $\pi_v$ is irreducible, and the fact that  $(G(F_v),G(\mathcal{O}_v))$ is a Gelfand pair  (see also \cite[p. 733]{BC}).

It follows from the definition of the restricted 
tensor product $(\pi_v,\psi_v)$ w.r.t. to the $G(\mathcal{O}_v)$-invariant unit vectors $\psi_v$ (see e.g. \cite{Mo}),  that
there exists a canonical injective equivariant unitary map  
$$\left({\bigotimes_v}^\prime (\pi_v,\psi_v)\right)^{\otimes n} \longrightarrow {\bigotimes_v}^\prime(\pi_v^{\otimes n},\psi_v^{\otimes n})\,\,.$$

According to \cite{CL},
if $\pi$ is weakly contained in $L^2_0({\sf G}(\Adele)/{\sf G}(F))$,
then the local constituents $\pi_v$ are $L^p$-representations for some uniform $p$ independent of $v$.
Then for $n\ge p/2$ we have $\pi_v^{\otimes n}\subset \infty \cdot \lambda_{G_v}$, and in particular $\pi_v^{\otimes n}$ is weakly contained in the regular representation $\lambda_{G_v}$. Now according to \cite[Thm. 2]{BC}, if $\sigma_v$ is irreducible and weakly contained in $\lambda_{G_v}$ for each $v$, then $\bigotimes^\prime_v (\sigma_v,\phi_v)$ ($\phi_v$ a $G(\mathcal{O}_v)$-invariant unit vector) is weakly contained in  $\lambda_{{\sf G}(\Adele)}$. The proof of this fact however makes no use of the irreducibility assumption, and so is valid for  $\sigma_v=\pi_v^{\otimes n}$, $\phi_v=\psi_v^{\otimes n}$ as well. Hence 
$$\pi^{\otimes n}\cong \left({\bigotimes_v}^\prime \pi_v\right)^{\otimes n}\subseteq
{\bigotimes_v}^\prime\pi_v^{\otimes n}
\preceq  \lambda_{{\sf G}(\Adele)},$$ 
Since this argument applies to every irreducible representation weakly contained
in $L^2_0({\sf G}(\Adele)/{\sf G}(F))$, it follows that the $n$-th tensor power of 
$L^2_0({\sf G}(\Adele)/{\sf G}(F))$ is weakly contained in $\lambda_{{\sf G}(\Adele)/{\sf G}(F))}$ as well. 


To complete the proof of Theorem \ref{rational points}, we set $\mathcal{O}_\vre=\mathcal{O}^\infty_\vre\times W$
where
$$
\mathcal{O}^\infty_\vre =\{g=(g_v)\in G_\infty:\, H_v(g_v^{\pm 1}-id)<\vre, v\in V_\infty\},
$$
and $W\subset G_f$ is a compact open subgroup such that the height $H$ is $W$-bi-invariant.
Then $\mathcal{O}_\vre$ has local dimension at most $\dim(G_\infty)$.
The family $\{B_{e^t}\}$ is H\"older well-rounded with respect to
the neighbourhoods $\mathcal{O}_\vre$ (see \cite[Proposition~2.19(2)]{GO2}).
By Theorem \ref{th:ad_mean}, the stable quantitative mean ergodic theorem holds for the action 
of $G$ on $L^2_0({\sf G}(\Adele)/{\sf G}(F))$. Therefore Theorem \ref{rational points} follows from Theorem
\ref{error estimate}.
\end{proof}

\section{Angular distribution in symmetric spaces}\label{sec:sym}

\subsection{Definitions, notations, and statements of results}
Let $G$ be a (noncompact) connected semisimple Lie group with finite center.
Consider the Cartan decomposition
$$
G=KA^+K
$$
where $K$ is a maximal compact subgroup in $G$, and $A^+$ is a closed positive Weyl chamber in a Cartan subgroup
$A$ compatible with $K$. The $A^+$-component in this decomposition is unique, and the
$K$-components of regular elements are unique modulo $M$ where $M$ is the centraliser of $A$ in $K$.
An element $g=k_1ak_2$ is called $\delta$-regular (for some $\delta>0$)
if the distance of $a$ from the walls of the Weyl chamber
is at least $\delta$. Otherwise, the element is called $\delta$-singular.

We denote by $d$ the Cartan-Killing metric on the symmetric space $G/K$
and set $D_t=\{g\in G:\, d(gK, K)\le t\}$. For $\Phi,\Psi\subset K$, we consider bisectors:
$$
D_t(\Phi,\Psi)=\{k_1ak_2:\;\;\; k_1\in\Phi,\;\; a\in A^+,\; d(aK,K)\le t,\;\; k_2\in\Psi\}.
$$
We are interested in the distribution of lattice points with respect to bisectors.
The main term in this problem was investigated in \cite{GO} and \cite{GOS2}, but the issue of rates
in the asymptotic estimates was not addressed.

We denote by $D^\delta_t$ the subset of $\delta$-singular elements of $D_t$.
It will be crucial that most of the volume is concentrated in the interior of the Weyl chamber, i.e.,
there exists $\zeta_0>0$ such that for every $\delta>0$,
\begin{equation}\label{eq:beta}
\vol(D_t^\delta)=O_{\delta,\eta}\left(\vol(D_t)^{1-\zeta_0+\eta}\right),\quad \eta>0,
\end{equation}
Let us note that  \eqref{eq:beta2} below gives the precise value of $\zeta_0$ in terms of the root system of $G$. The lower order of magnitude of the volume of the neighbourhood $D_t^\delta$ of the singular set  is the main difference between the bisectors on Riemannian symmetric spaces discussed in the present section, and bisectors in more general affine symmetric spaces considered in Section \ref{sec:affine}.

We fix a base of neighbourhoods $\mathcal{O}_\vre$ of identity in $G$ with respect to a
(right) invariant Riemannian metric. A measurable subset $\Phi$ of a homogeneous space of $K$ is called Lipschitz
well-rounded if 
$$
\vol\left((\mathcal{O}_\vre\cap K)\Phi-\cap_{u\in\mathcal{O}_\vre\cap K} u\Phi\right)\ll_\Phi \vre
$$
for $\vre\in (0,\vre_1)$. 
For example, it is easy to check that 
balls with respect to an invariant Riemannian metric are Lipschitz well-rounded.
Note that this notion does not depend on a choice of a Riemannian metric.

\begin{theorem}\label{th:sector}
Let $\Gamma$ be a lattice in $G$ such that  the representation $\pi_{G/\Gamma}^0$ in $L_0^2(G/\Gamma)$ is $L^{p+}$ for some $p>0$,
$\Phi$ a Lipschitz
well-rounded subset of $K/M$ with positive measure, and $\Psi$ a Lipschitz well-rounded subset of
$M\backslash K$  with positive measure.
Then
$$
|\Gamma\cap D_t(\Phi,\Psi) |=\frac{\vol(D_t(\Phi,\Psi))}{\vol(G/\Gamma)}+O_{\Phi,\Psi,\eta}(\vol(D_t)^{1-\zeta+\eta}),\quad\eta>0,
$$
where $\zeta=\min\{\zeta_0,(2n_e(p))^{-1}(1+\dim G)^{-1}\}$.
Moreover, this estimate is uniform over all lattices such that
the representation $L_0^2(G/\Gamma)$ is $L^{p+}$ and $\vre_0(e,\Gamma)\ge \vre_0$
with fixed $\vre_0>0$.
\end{theorem}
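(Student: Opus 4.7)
The plan is to apply the general recipe of Section \ref{sec:ex}: establish the stable quantitative mean ergodic theorem in $L^2(G/\Gamma)$ for averages along $D_t(\Phi,\Psi)$, verify effective regularity of the family, and invoke Theorem \ref{error estimate}. The main difficulty is that the Cartan decomposition $g = k_1 a k_2$ degenerates near the walls of the Weyl chamber, so the bisectors fail to be H\"older well-rounded globally; this is handled by separating the $\delta$-regular part from the $\delta$-singular part and exploiting the volume bound \eqref{eq:beta}.

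For the spectral step, let $\beta_t^{\Phi,\Psi}$ be the Haar-uniform probability measure on $D_t(\Phi,\Psi)$. In Cartan coordinates $dm_G = J(a)\,dk_1\,da\,dk_2$ with an explicit density $J$, so $d\beta_t^{\Phi,\Psi}/dm_G \le C(\Phi,\Psi)\, d\beta_t/dm_G$, where $\beta_t$ is the uniform average on the ball $D_t$. The spectral transfer principle together with the Kunze--Stein inequality (exactly as in the proof of Theorem \ref{semisimple mean}) then yields
$$
\norm{\pi_{G/\Gamma}^0(\beta_t^{\Phi,\Psi})} \le C'_\eta(\Phi,\Psi)\, \vol(D_t)^{-(2n_e(p))^{-1}+\eta}, \quad \eta > 0,
$$
with the same bound holding for the perturbed averages along $D_t(\Phi,\Psi)^{\pm}(\vre)$ uniformly in $\vre$; this furnishes the stable quantitative mean ergodic theorem with rate $E(t) \ll_\eta \vol(D_t)^{-(2n_e(p))^{-1}+\eta}$.

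For the geometric step, fix $\delta > 0$ and split $D_t(\Phi,\Psi) = R_t^\delta \sqcup S_t^\delta$ into its $\delta$-regular and $\delta$-singular parts. On the open set of $\delta$-regular elements, the Cartan map $g \mapsto (k_1 M,\, a,\, M k_2)$ is smooth with derivatives bounded by a fixed power $\delta^{-N}$; combined with the Lipschitz well-roundedness of $\Phi$ and $\Psi$, this shows $R_t^\delta$ is H\"older well-rounded (with respect to the Riemannian neighbourhoods $\mathcal{O}_\vre$) with exponent $a = 1$ and constant $c = c(\Phi,\Psi,\delta)$ polynomial in $\delta^{-1}$. Theorem \ref{error estimate}, applied with $\rho = \dim G$ and $a = 1$, therefore yields
$$
\biggabs{\,|\Gamma \cap R_t^\delta| - \frac{\vol(R_t^\delta)}{\vol(G/\Gamma)}\,} \le C_{\Phi,\Psi,\delta,\eta}\, \vol(D_t)^{1 - (2n_e(p))^{-1}/(1+\dim G) + \eta}.
$$

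For the singular contribution, a standard packing argument based on a small symmetric neighbourhood $W$ of the identity that injects into $G/\Gamma$ gives $|\Gamma \cap S_t^\delta| \le m_G(S_t^\delta \cdot W)/m_G(W)$; enlarging $S_t^\delta \cdot W$ into $D_{t + O(1)}^{\delta/2}$ and invoking \eqref{eq:beta}, we obtain $|\Gamma \cap S_t^\delta| = O_{\delta,\eta}(\vol(D_t)^{1 - \zeta_0 + \eta})$, and likewise $\vol(S_t^\delta)/\vol(G/\Gamma) = O_{\delta,\eta}(\vol(D_t)^{1 - \zeta_0 + \eta})$. Adding the two contributions, fixing $\delta > 0$ once and for all, and absorbing the $\delta$-dependence into the $O_{\Phi,\Psi,\eta}$ constant, we obtain the claimed bound with exponent $\zeta = \min\{\zeta_0,\, (2n_e(p))^{-1}(1+\dim G)^{-1}\}$. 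The main obstacle lies in the geometric step, namely controlling the Cartan decomposition quantitatively on the regular locus with explicit dependence of the H\"older constant on $\delta$; the uniformity over the stated family of lattices is inherited directly from the uniform dependence of constants in Theorem \ref{error estimate} and from the fact that the spectral bound above depends on $\Gamma$ only through $p$ and $\vre_0$.
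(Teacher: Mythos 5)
Your proposal follows essentially the same route as the paper: comparison of the bisector average with the ball average to obtain the spectral rate via Theorem~\ref{semisimple mean}, a split into $\delta$-regular and $\delta$-singular parts, Lipschitz well-roundedness of the regular part from a quantitative Cartan decomposition, a packing bound for the singular part using \eqref{eq:beta}, and an application of Theorem~\ref{error estimate} with $\rho=\dim G$, $a=1$. The one step you correctly flag but do not carry out---the Lipschitz control of the Cartan components on the $\delta$-regular locus---is exactly Proposition~\ref{wave front} in the paper, which is what makes Proposition~\ref{bisectors Lip} (Lipschitz well-roundedness of $\tilde D_t^\delta(\Phi,\Psi)$) available.
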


We also state a version of this theorem in the language of test-functions.
Since it is essentially equivalent
to Theorem \ref{th:sector}, we only give a proof of Theorem \ref{th:sector}.
For an element $g\in G$, we write its Cartan decomposition as $g=k_1(g)a(g)k_2(g)$.

\begin{theorem}\label{th:sector2}
Let $\Gamma$ be a lattice in $G$ such that  the representation $\pi_{G/\Gamma}^0$ in $L_0^2(G/\Gamma)$ is $L^{p+}$ for some $p>0$,
$\phi_1$ a Lipschitz function on $K/M$, and $\phi_2$ a Lipschitz function on $M\backslash K$.
Then
\begin{align*}
\sum_{\gamma\in \Gamma\cap D_t}\phi_1(k_1(\gamma))\phi_2(k_2(\gamma)) =&\frac{\vol(D_t)}{\vol(G/\Gamma)}
\left(\int_{K/M} \phi_1 \, dk\right)\left(\int_{M\backslash K} \phi_2 \,dk\right)\\
&+
O_{\phi_1,\phi_2,\eta}(\vol(D_t)^{1-\zeta+\eta}),\quad\eta>0,
\end{align*}
where $\zeta$ is as in Theorem \ref{th:sector}.
\end{theorem}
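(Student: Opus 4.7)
The plan is to adapt the proof of Theorem \ref{error estimate} to a weighted counting problem, using the function $F(g)=\phi_1(k_1(g))\phi_2(k_2(g))$ both as the integrand of the lattice sum and as the density of a new averaging measure. Since the Cartan components $k_1,k_2$ are only well-defined on the regular locus, the contribution from near-singular lattice points will be estimated separately using \eqref{eq:beta} and absorbed into the error via the exponent $\zeta_0$.

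First I would introduce the probability measure $\beta_t^F$ on $G$ with density proportional to $F(g)\chi_{D_t}(g)$, whose total mass is $N_t=v(t)\bigl(\int_{K/M}\phi_1\,dk\bigr)\bigl(\int_{M\backslash K}\phi_2\,dk\bigr)$ by the Cartan integration formula $dg=J(a)\,dk_{K/M}\,da\,dk_{M\backslash K}$, where $v(t)$ is the radial volume of $D_t$. Because $\|\beta_t^F\|_q\ll_{\phi_1,\phi_2}\vol(D_t)^{-1+1/q}$ for every $q\in[1,2)$, the Kunze--Stein/spectral transfer argument of Theorem \ref{semisimple mean} delivers the weighted stable quantitative mean ergodic bound
\[
\bigl\|\pi_{G/\Gamma}^0(\beta_t^F)\bigr\|\ll_\eta\vol(D_t)^{-(2n_e(p))^{-1}+\eta},\quad\eta>0,
\]
which is the key spectral input. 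This step is robust under $\mathcal{O}_\vre$-perturbations of $D_t$, so the stability requirement of Definition \ref{stable} is met.

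Next I would run the Chebyshev/smoothing scheme from the proof of Theorem \ref{error estimate} with the automorphic bump $\Phi_\vre(g\Gamma)=\sum_{\gamma\in\Gamma}\chi_\vre(g\gamma)$. The weighted analogue of Lemma \ref{comparison} takes the form
\[
\int_{D_t^-(\vre)}F(g)\Phi_\vre(g^{-1}h\Gamma)\,dg-R_\vre(t)\le\sum_{\gamma\in\Gamma\cap D_t}F(\gamma)\le\int_{D_t^+(\vre)}F(g)\Phi_\vre(g^{-1}h\Gamma)\,dg+R_\vre(t),
\]
for $h\in\mathcal{O}_\vre$, where the remainder $R_\vre(t)$ collects (i) the Lipschitz oscillation of $F$ over $\mathcal{O}_\vre$-perturbations of regular lattice points, contributing $O(\vre\,\vol(D_t))$ via the Lipschitz continuity of the Cartan projections $k_1,k_2$ on the regular set, and (ii) the count of $\vre$-singular lattice points in $D_t$, contributing $O(\vol(D_t)^{1-\zeta_0+\eta})$ by \eqref{eq:beta}. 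Applying the mean ergodic bound above, Chebyshev's inequality, and the identity $\|\Phi_\vre\|_2^2=\vol(\mathcal{O}_\vre)^{-1}/V(\Gamma)$ in the manner of Theorem \ref{error estimate}, then optimising $\vre$ to balance $\vre\,\vol(D_t)$ against $\vre^{-\dim G}\vol(D_t)^{1-(2n_e(p))^{-1}+\eta}$, produces the exponent $(2n_e(p))^{-1}(1+\dim G)^{-1}$; combined with $\zeta_0$ this gives the stated $\zeta$.

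The main obstacle is the weighted comparison step: unlike the unweighted Lemma \ref{comparison} which uses only inclusion of supports, here one must quantitatively control the oscillation of $F$ over $\mathcal{O}_\vre$-perturbations of each regular lattice point. This requires Lipschitz continuity of the Cartan projections $k_1,k_2$ on the regular locus, in the spirit of the wave-front argument or the direct computation sketched for $\mathrm{SL}_2(\RR)$ in \S\ref{sec:ex}, together with the volume bound \eqref{eq:beta} whose precise exponent $\zeta_0$ sets the threshold below which singular contributions cease to dominate. Once this quantitative regularity of $k_1,k_2$ is established, the rest of the argument is a mechanical repetition of the proof of Theorem \ref{error estimate} with $F$ inserted into the integrand.
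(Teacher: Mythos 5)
The paper does not give a separate proof of Theorem~\ref{th:sector2}: it declares the test-function version ``essentially equivalent'' to Theorem~\ref{th:sector} and proves only the latter, so the implicit route is a layer-cake (or step-function) deduction from the bisector count. Your proposal instead runs a \emph{direct} weighted version of the argument of Theorem~\ref{error estimate}, replacing the Haar-uniform average on $D_t$ by a density $F\chi_{D_t}$, proving a Kunze--Stein norm bound for the weighted operator, and replacing the support-inclusion comparison of Lemma~\ref{comparison} by a Lipschitz oscillation estimate for $F$ on the regular set (plus the crude bound \eqref{eq:beta} on the $\delta$-singular locus). That is a genuinely different route, and it has a concrete payoff: it avoids having to verify uniform Lipschitz well-roundedness of the superlevel sets $\{\phi_i>s\}$ over $s$, which is the technical awkwardness hidden in ``essentially equivalent.'' Your optimisation of $\vre$ reproduces the exponent $\zeta_1=(2n_e(p))^{-1}(1+\dim G)^{-1}$, and combined with \eqref{eq:beta} you get the claimed $\zeta=\min\{\zeta_0,\zeta_1\}$, matching the paper.

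Two points you should attend to when fleshing this out. First, $F$ need not be nonnegative, so $\beta_t^F$ is generally a signed measure and its total mass need not be $1$ (indeed $N_t$ can be negative); you should decompose $\phi_i=\phi_i^+-\phi_i^-$ (each $\phi_i^\pm$ is again Lipschitz with the same constant) and argue with each of the four nonnegative pieces of $F$, or else rephrase the Chebyshev step for non-normalised nonnegative densities and then take differences. Second, the weighted comparison lemma is \emph{not} a two-sided inequality by positivity as in Lemma~\ref{comparison}; it is an approximation $\big|\int_{D_t}F(g)\phi_\vre(g^{-1}h\Gamma)\,dg-\sum_{\gamma\in\Gamma\cap D_t}F(\gamma)\big|\le R_\vre(t)$. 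To bound $R_\vre(t)$ by $O(\vre\,\vol(D_t))+O_\eta(\vol(D_t)^{1-\zeta_0+\eta})$ you need precisely Proposition~\ref{wave front} (the effective Cartan decomposition) so that for $\delta$-regular $\gamma$ the oscillation of $F$ over $\mathcal{O}_\vre\gamma\mathcal{O}_\vre$ is $O_\delta(\vre)$; you also need the boundary contribution, i.e.\ the $O(\vre\,\vol(D_t))$ lattice points within distance $\vre$ of $\partial D_t$, which follows from Lipschitz admissibility of $D_t$. You should further be careful that the $h$ realising \eqref{eq:intersection} is chosen from a set of positive $m_{G/\Gamma}$-measure intersected with $\mathcal{O}_\vre\Gamma$, exactly as in the proof of Theorem~\ref{error estimate}; your $R_\vre(t)$ is deterministic only \emph{after} this choice of $h\in\mathcal{O}_\vre$ has been made. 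With these caveats spelled out, your proof is correct and arguably cleaner than the deduction the paper has in mind.
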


The proof of Theorem \ref{th:sector} is based on Theorem \ref{error estimate}. According to the recipe of \S 3.1, we must verify the spectral condition for the averages  supported on $D_t(\Phi,\Psi)$, and then show that they are H\"older well-rounded. The spectral estimate is an immediate consequence of Theorem \ref{semisimple mean}. The main issue here is the regularity of the sets, and we now undertake the task of showing 
that they are in fact Lipschitz well-rounded.    
\subsection{Quantitative wave front lemma}

It was shown in \cite{N2} that the components of the Cartan decomposition $G=KA^+K$
of regular elements vary continuously under small 
perturbations. The proof is very simple and based on the proof of the wave-front Lemma given in \cite[Lemma 5.11]{EMM}
(see also \cite[Theorem 3.1]{EM}).  We apply this argument to show that the variation of the Cartan components is Lipschitz.
See also \cite{GOS2} for a different argument.

For $\delta>0$, we denote by $\tilde A^\delta$ the subset of $A^+$ consisting of elements with distance
$\ge \delta$ from the walls. 

\begin{proposition}\label{wave front}{\bf Effective Cartan decomposition.}
Let $\delta>0$. There exist $\vre_0,\ell_0>0$ such that for every $g=k_1ak_2\in K\tilde A^\delta K$
and $\vre\in (0,\vre_0)$,
$$
\mathcal{O}_{\vre}\, g\, \mathcal{O}_\vre\subset (\mathcal{O}_{\ell_0\vre}\cap K) k_1M\,
(\mathcal{O}_{\ell_0\vre}\cap A)a
\,k_2(\mathcal{O}_{\ell_0\vre}\cap K).
$$
\end{proposition}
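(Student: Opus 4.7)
Since $K$ is compact and the metric on $G$ is right-invariant, conjugation by any $k\in K$ is Lipschitz with a constant $C$ depending only on $K$; hence $\mathcal{O}_\vre k_1\subset k_1\mathcal{O}_{C\vre}$ and $k_2\mathcal{O}_\vre\subset\mathcal{O}_{C\vre}k_2$, so it suffices to show, for every $a\in\tilde A^\delta$,
$$
\mathcal{O}_{C\vre}\cdot a\cdot\mathcal{O}_{C\vre}\ \subset\ (\mathcal{O}_{\ell'\vre}\cap K)\,M\,(\mathcal{O}_{\ell'\vre}\cap A)\,a\,(\mathcal{O}_{\ell'\vre}\cap K),
$$
with $\ell'$ depending only on $\delta$. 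The plan is to establish this bi-invariant wave-front estimate via a quantitative inverse function theorem at the base point $a$, in the spirit of the proof of \cite[Lemma~5.11]{EMM}.

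Consider $\Psi_a(X,H,Y)=\exp(X)\,a\exp(H)\,\exp(Y)$ on a neighbourhood of the origin in $\mathfrak{k}\oplus\mathfrak{a}\oplus\mathfrak{k}$. Its differential at $0$, in the right-translated identification $T_aG\cong\mathfrak{g}$, is $(X,H,Y)\mapsto X+H+\Ad(a)Y$. Writing $\mathfrak{g}=\mathfrak{m}\oplus\mathfrak{a}\oplus\bigoplus_{\alpha>0}(\mathfrak{k}_\alpha\oplus\mathfrak{p}_\alpha)$ with $\mathfrak{k}_\alpha$ and $\mathfrak{p}_\alpha$ spanned by vectors of the form $E\pm\theta E$ for $E\in\mathfrak{g}_\alpha$, the operator $\Ad(a)$ restricts to each two-dimensional block as the hyperbolic rotation $\bigl(\begin{smallmatrix}\cosh\alpha(\log a)&\sinh\alpha(\log a)\\ \sinh\alpha(\log a)&\cosh\alpha(\log a)\end{smallmatrix}\bigr)$. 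Block-wise inversion then produces an explicit right inverse of $D\Psi_a(0)$ with norm bounded by $\max_{\alpha>0}\bigl\{\coth\alpha(\log a),\,1/\sinh\alpha(\log a)\bigr\}\leq C_\delta$ uniformly on $\tilde A^\delta$, and whose kernel is $\mathfrak{m}$-valued, reflecting precisely the $M$-ambiguity of the Cartan decomposition.

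The equation $\Psi_a(X,H,Y)=\tilde u_1 a\tilde u_2$ for $\tilde u_i\in\mathcal{O}_{C\vre}$ is then solved by a Newton--Kantorovich iteration, which yields $(X,H,Y)$ of norm $O_\delta(\vre)$. The main obstacle is that the higher-order Baker--Campbell--Hausdorff corrections in $\Psi_a$ involve $\Ad(a)$, whose operator norm is not uniformly bounded as $a\to\infty$; this is resolved by exploiting the specific form of the perturbation, namely that translating $\tilde u_1 a\tilde u_2$ by $a^{-1}$ on the left gives $\exp(\Ad(a^{-1})\log\tilde u_1)\,\exp(\log\tilde u_2)$, whose components in the directions where $\Ad(a^{-1})$ expands are precisely the ones absorbed by the $\Ad(a)Y$-term in the linearisation, so the iteration remains confined to the $O_\delta(\vre)$-ball uniformly in $a\in\tilde A^\delta$. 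Finally, by commutativity of $A$ we have $\exp(X)\,a\exp(H)\,\exp(Y)=\exp(X)\exp(H)\,a\,\exp(Y)$, and conjugating $\exp(X),\exp(Y)\in K$ back through $k_1,k_2$ distorts them by at most the uniform constant $C$; this yields the desired inclusion with $\ell_0=C\cdot\ell'$, the $M$-factor absorbing the $\mathfrak{m}$-kernel that was modded out in the inversion.
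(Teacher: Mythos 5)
Your proposal takes a genuinely different route from the paper's. You attack the bi-invariant wave-front estimate directly via a quantitative inverse function theorem for the Cartan map $\Psi_a(X,H,Y)=\exp(X)\,a\exp(H)\exp(Y)$, and your analysis of the linearisation $D\Psi_a(0)(X,H,Y)=X+H+\Ad(a)Y$ — the block-wise hyperbolic rotation on $\mathfrak{k}_\alpha\oplus\mathfrak{p}_\alpha$, the uniform bound $C_\delta$ on the norm of a right inverse for $a\in\tilde A^\delta$, and the $\mathfrak{m}$-kernel — is correct. The paper instead handles the $A$-component by citing \cite[Lemma~5.11]{EMM} and then controls the $K$-components by an elementary matrix-coefficient estimate in a finite-dimensional representation with a vector $e_1$ projectively stabilised exactly by $P=MAU$: the inequality $e^{\lambda(\log b)}=\langle k_1^{-1}agk_2^{-1}e_1,e_1\rangle\le\|agk_2^{-1}e_1\|$ combined with $\|b\|\ge(1-c\vre)\|a\|$ forces $\|k_2^{-1}e_1-e_1\|=O_\delta(\vre)$, hence $k_2\in M(\mathcal{O}_{c'\vre}\cap K)$. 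That argument is self-contained and sidesteps the uniformity problem entirely.

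However, there is a genuine gap in your third paragraph. You correctly identify that the Newton--Kantorovich step requires controlling the quadratic corrections, which involve $\Ad(a)$ and are not uniformly bounded over $\tilde A^\delta$, but the "absorption" claim you offer is a heuristic, not a proof. Concretely: in any fixed right-invariant (or bi-invariant) metric both the initial residual $\|\Psi_a(0)^{-1}\tilde u_1a\tilde u_2 - e\|\sim\|\Ad(a)\|\vre$ and the second-derivative bound for $\Psi_a$ grow like $\|\Ad(a)\|$, so the usual Kantorovich condition $2\beta L\eta\le 1$ fails uniformly. Your observation that the expanded directions on the two sides after conjugating by $a^{-1}$ are "the same" is the right intuition, but making it rigorous requires an $a$-dependent rescaling of coordinates (e.g.\ replacing $Y$ by $Y'=\Ad(a)Y$ and tracking that the solution $Y'$ must land in the anisotropically scaled ellipsoid $\Ad(a)(\mathfrak{k}\cap\mathcal{O}_\vre)$, not just in a ball), together with verifying that the fixed-point iteration preserves this elongated domain — none of which is carried out. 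Until that bookkeeping is done, the uniformity in $a$ is asserted rather than established, and it is precisely this uniformity that is the content of the proposition.
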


\begin{proof}
Using that $K$ is compact, it is easy to reduce the proof to showing that 
$$a\mathcal{O}_\vre\subset (\mathcal{O}_{\ell_0\vre}\cap K)M\,
(\mathcal{O}_{\ell_0\vre}\cap A)a \,(\mathcal{O}_{\ell_0\vre}\cap K).$$
Since the proof of \cite[Lemma~5.11]{EMM} implies that
$$a\mathcal{O}_\vre\subset K\,
(\mathcal{O}_{\ell_0\vre}\cap A)a \,K$$
for some $\ell_0>0$, it remains to analyse behaviour of the $K$-components. 

Let $P$ be the standard parabolic subgroup,
i.e., $P=MAU$ where $M$ is the centraliser of $A$ in $K$ and $U$ is
the subgroup generated by positive root subgroups.
There exists a real representation $G\to\hbox{GL}(V)$ such that
for some vector $e_1\in V$, its projective stabiliser is $P$ \cite[Theorem~4.29]{gjt}. 
Without loss of generality, we may assume that $V=\left<G e_1\right>$.
One can choose a Euclidean structure on $V$
such that $K$ consists of orthogonal matrices, $A$ consists of self-adjoint
matrices, and $\|e_1\|=1$. We fix an orthonormal basis $\{e_i\}$ of eigenvectors of $A$.
If $ae_1=e^{\lambda(\log a)}e_1$ for $\lambda\in\hbox{Lie}(A)^*$,
then the weights of $e_i$'s are of the form $\lambda-\alpha_i$ where $\alpha_i$
is a positive linear combination of positive roots. In particular, it follows that for
every $a\in A^+$, $\|a\|=e^{\lambda(\log a)}$.

Let $a\in \tilde A^\delta$, $g\in\mathcal{O}_\vre$ and $ag=k_1bk_2$ for $k_1,k_2\in K$ and $b\in A^+$.
We will show that for some $c>0$, we have $\|k_1e_1-e_1\|<c\vre$ and $\|k_2^{-1}e_1-e_1\|<c\vre$.
Since $P\cap K=M$, this implies that $k_1 \in (\mathcal{O}_{c'\vre}\cap K)M$ and
$k_2\in M(\mathcal{O}_{c'\vre}\cap K)$ for some  $c'>0$, as required.

We have
\begin{align}\label{eq:l1}
e^{\lambda(\log b)}=\inn {be_1,e_1}=\inn{k_1^{-1} agk_2^{-1}e_1,e_1}\le \|agk_2^{-1}e_1\|.
\end{align}
Writing $gk_2^{-1}e_1=\sum_i u_i e_i$ with $u_i\in\mathbb{R}$, we get
$$
\|agk_2^{-1}e_1\|^2=e^{2\lambda(\log a)}\sum_i e^{-2\alpha_i(\log a)}u_i^2\le
e^{2\lambda(\log a)}\left(u_1^2+\sum_{i>1} e^{-c_1\delta }u_i^2\right)
$$
for some $c_1>0$. Since $g\in\mathcal{O}_\vre$, 
\begin{equation}\label{eq:l2}
\|gk_2^{-1}e_1\|^2\le 1+c_2\vre
\end{equation}
for some $c_2>0$, and
\begin{equation}\label{eq:l3}
\|agk_2^{-1}e_1\|^2\le e^{2\lambda(\log a)}\left(u_1^2+e^{-c_1\delta }(1+c_2\vre-u_1^2)\right).
\end{equation}
Since $\|b\|^2=\|ag\|^2\ge (1-c_3\vre)\|a\|^2$ for some $c_3>0$, it follows that 
$e^{2\lambda(\log b)-2\lambda(\log a)}\ge 1-c_3\vre$. Hence,
combining \eqref{eq:l1} and \eqref{eq:l3}, we get
$$
u_1^2+e^{-c_1\delta }(1+c_2\vre-u_1^2)\ge  1-c_3\vre,
$$
and
$$
u_1^2\ge \frac{1-c_3\vre - e^{-c_1\delta }(1+c_2\vre)}{1-e^{-c_1\delta }}.
$$
This shows that $|u_1|= 1+O_\delta(\vre)$.
Then by \eqref{eq:l2},  $\|gk_2^{-1}e_1- e_1\|=O_\delta(\vre)$.
Since $g\in\mathcal{O}_\vre$, it follows that $\|k_2^{-1}e_1- e_1\|=O_\delta(\vre)$ as well.

The proof that $\|k_1e_1- e_1\|=O_\delta(\vre)$ is similar.
\end{proof}

\begin{proposition}\label{bisectors Lip}
Let $\Phi$ be a Lipschitz well-rounded subset of $K/M$ with positive measure
and $\Psi$ a  Lipschitz well-rounded subset of $M\backslash K$ with positive measure.
Then for every $\delta>0$, the family of sets 
$$
\tilde D^\delta_t(\Phi,\Psi):=\{k_1ak_2:\;\;\; k_1\in\Phi,\;\; a\in \tilde A^\delta,\; d(aK,K)\le t,\;\; k_2\in\Psi\}.
$$
is Lipschitz well-rounded.
\end{proposition}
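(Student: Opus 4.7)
The plan is to transfer the Lipschitz well-roundedness of the bisector family from the pointwise effective Cartan decomposition of Proposition~\ref{wave front}. I would apply this proposition twice: once to sandwich $\mathcal{O}_\vre\, \tilde D^\delta_t(\Phi,\Psi)\, \mathcal{O}_\vre$ from above by a bisector of the same form with $O(\vre)$-perturbed parameters, and once, dually, to sandwich $\bigcap_{u,v\in\mathcal{O}_\vre} u\,\tilde D^\delta_t(\Phi,\Psi)\, v$ from below by a bisector with the opposite perturbation. The volume comparison will then factor through the Cartan integration formula into three independent pieces, two of which are handled by the assumed Lipschitz well-roundedness of $\Phi$ and $\Psi$.

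First, I fix $\vre$ small relative to $\delta$ and apply Proposition~\ref{wave front} pointwise to each $g = k_1 a k_2 \in \tilde D^\delta_t(\Phi,\Psi)$. Using that $M$ centralises $A$, one obtains
\[ \mathcal{O}_\vre\, \tilde D^\delta_t(\Phi,\Psi)\, \mathcal{O}_\vre \;\subset\; \tilde D^{\delta - c_1\vre}_{t + c_1\vre}\bigl(\Phi^+,\Psi^+\bigr), \]
where $\Phi^+ = (\mathcal{O}_{\ell_0\vre}\cap K)\Phi$ and $\Psi^+ = \Psi(\mathcal{O}_{\ell_0\vre}\cap K)$, and $c_1$ depends only on $\ell_0$ and the metric. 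Dually, setting $\Phi^- = \bigcap_{u\in \mathcal{O}_{\ell_0\vre}\cap K} u\Phi$ and $\Psi^- = \bigcap_{v\in \mathcal{O}_{\ell_0\vre}\cap K} \Psi v$, I would invoke Proposition~\ref{wave front} once more, together with symmetry of $\mathcal{O}_\vre$, to show that every $u^{-1}(k_1 a k_2)v^{-1}$ with $u,v\in\mathcal{O}_\vre$ has Cartan components lying in $\Phi$, $\tilde A^\delta$, $\Psi$ provided $k_1M\in \Phi^-$, $Mk_2\in\Psi^-$, $a\in \tilde A^{\delta+c_1\vre}$, and $d(aK,K)\le t - c_1\vre$. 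This yields
\[ \tilde D^{\delta + c_1\vre}_{t - c_1\vre}\bigl(\Phi^-,\Psi^-\bigr) \;\subset\; \bigcap_{u,v\in\mathcal{O}_\vre} u\,\tilde D^\delta_t(\Phi,\Psi)\, v. \]

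Next, the Cartan integration formula factorises
\[ m_G\bigl(\tilde D^{\delta'}_{t'}(\Phi',\Psi')\bigr) = m_{K/M}(\Phi')\, m_{M\backslash K}(\Psi') \int_{\tilde A^{\delta'},\, d(aK,K)\le t'} J(a)\, da, \]
where $J$ is the standard Jacobian, so the ratio of the two volume estimates above splits into the three pieces $m_{K/M}(\Phi^+)/m_{K/M}(\Phi^-)$, $m_{M\backslash K}(\Psi^+)/m_{M\backslash K}(\Psi^-)$, and the $A$-integral ratio. The first two are $1+O(\vre)$ by the assumed Lipschitz well-roundedness of $\Phi$ and $\Psi$. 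The third, which compares $\int_{\tilde A^{\delta-c_1\vre}_{t+c_1\vre}} J$ with $\int_{\tilde A^{\delta+c_1\vre}_{t-c_1\vre}} J$, further decomposes into a $t$-shift of size $O(\vre)$ that contributes $1+O(\vre)$ because the integral grows exponentially in $t$, and a $\delta$-shift of size $O(\vre)$ that contributes the $J$-measure of the thin collar $\{\delta - c_1\vre \le \mathrm{dist}(a,\partial A^+)\le \delta + c_1\vre\}$ inside $\tilde A^\delta_t$. Multiplying the three factors gives the Lipschitz well-roundedness bound.

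The main obstacle lies in the collar estimate for the $\delta$-regular cut: one must verify that, for $\delta$ fixed, this thin collar's $J$-weighted measure is $O(\vre)$ relative to $\int_{\tilde A^\delta_t} J$ with constants uniform in $t$. This is not automatic from the exponential growth alone, but follows from the explicit $\sinh$-form of $J$, which concentrates the mass of $\tilde A^\delta_t$ near the sphere $\{d(aK,K)=t\}$, combined with the fact that the collar has fixed codimension and bounded thickness. All the other estimates in Step~2 are routine consequences of the exponential growth of the radial volume and of the definition of Lipschitz well-roundedness of subsets of $K/M$ and $M\backslash K$.
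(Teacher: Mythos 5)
Your overall strategy matches the paper's: apply Proposition~\ref{wave front} to sandwich $\mathcal{O}_\vre\,\tilde D^\delta_t(\Phi,\Psi)\,\mathcal{O}_\vre$ and $\bigcap u\,\tilde D^\delta_t(\Phi,\Psi)\,v$ between bisectors with $O(\vre)$-perturbed parameters, then factor the volume ratio through the Cartan integration formula into a $\Phi$-piece, a $\Psi$-piece, and an $A$-piece, using the assumed Lipschitz well-roundedness of $\Phi$ and $\Psi$ for the first two. (The paper splits the volume difference additively rather than taking ratios, but that is a cosmetic distinction, and you also need the Lipschitz property of $t\mapsto\log\vol(D_t)$, from \cite[Prop.~7.1]{GN}, for the radial shift.)

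There is, however, a genuine gap in your treatment of the collar estimate — the piece you correctly identify as the main obstacle. You propose to conclude from ``the explicit $\sinh$-form of $J$, which concentrates the mass of $\tilde A^\delta_t$ near the sphere $\{d(aK,K)=t\}$, combined with the fact that the collar has fixed codimension and bounded thickness.'' This reasoning does not yield the required bound. Radial concentration is equally true of the collar and the bulk, so it cannot by itself distinguish them; and ``fixed codimension plus bounded thickness'' only controls the \emph{Lebesgue} measure of the collar on each sphere, not its $J$-weighted measure. If $J$ happened to concentrate on the sphere precisely near the walls, the collar would carry most of the mass despite having Lebesgue measure $O(\vre)$; your stated ingredients do not rule this out. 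What actually makes the collar negligible is the direction in which $J$ concentrates. Writing $\xi(a)\le e^{2\rho(\log a)}$, the paper sets $\alpha=\max\{2\rho(\log a):a\in A^+\cap D_1\}$ and $\alpha_0=\max\{2\rho(\log a):a\in\hbox{walls}(A^+)\cap D_1\}$, and observes that $\alpha_0<\alpha$ because the metric balls are strictly convex and $2\rho$ is positive on the interior of $A^+$, so its maximum on the unit ball is achieved at a unique interior direction. The collar sits at bounded distance $\approx\delta$ from the walls, hence as $t\to\infty$ lies in directions close to the walls; there $2\rho(\log a)\le\alpha_0\,t+O(\delta)$, so the $J$-weighted collar measure is $\ll\vre\,t^{\dim A^+-1}e^{\alpha_0 t}$, which is $\ll\vre\,\vol(D_t)$ precisely because of the exponential gap $\alpha_0<\alpha$ (this is \eqref{eq:beta2} in the paper). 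Without invoking this strict inequality — or some equivalent statement that the $J$-mass on the sphere concentrates away from the walls — the $O(\vre)$ bound for the collar does not follow.
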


Before we start the proof, we recall some facts about volumes.
The Haar measure in $KA^+K$-coordinates is given by $dk_1\,\xi(a)da\, dk_2$ where
$dk_1$, $da$, $dk_2$ are Haar measures on the components and
\begin{equation}\label{eq:xi}
\xi(a)=\prod_{\alpha\in\Sigma^+} (e^{\alpha(\log a)}-e^{-\alpha(\log a)})
\end{equation}
(here $\Sigma^+$ is the set of positive roots). We denote by $2\rho$ the sum of positive roots with multiplicities.
It is well-known that for  every $\eta>0$ and $c_\eta>1$,
\begin{equation}\label{eq:D_t}
c^{-1}_\eta\, e^{(\alpha-\eta)t}\le \vol(D_t)\le c_\eta\, e^{(\alpha+\eta)t}
\end{equation}
where $\alpha=\max\{2\rho(\log a): a\in A^+\cap D_1\}$.\footnote{In fact, the exact asymptotic is known,
  but we do not need it here.}
We also set 
$\alpha_0=\max\{2\rho(\log a): a\in \hbox{walls}(A^+)\cap D_1\}$.
Since the balls are strictly convex, $\alpha_0<\alpha$. 
Hence, using that  $\xi(a)\le e^{2\rho(\log a)}$, we deduce from (\ref{eq:D_t}) that
\begin{equation}\label{eq:beta2}
\vol(D_t^\delta)=O_{\delta,\eta}\left(\vol(D_t)^{\alpha_0/\alpha+\eta}\right),\quad \eta>0.
\end{equation}

\begin{proof}[Proof of Proposition \ref{bisectors Lip}]
By Proposition \ref{wave front},
\begin{align*}
\mathcal{O}_\vre \tilde D^\delta_t(\Phi,\Psi) \mathcal{O}_\vre\subset
\tilde D^{\delta-\ell_0\vre}_{t+2\vre}(\Phi^+_\vre,\Psi^+_\vre)\quad\hbox{and}\quad
\bigcap_{u,v\in\mathcal{O}_\vre} u \tilde D^\delta_t(\Phi,\Psi) v \supset
\tilde D^{\delta+\ell_0\vre}_{t-2\vre}(\Phi^-_\vre,\Psi^-_\vre)
\end{align*}
where $\Phi_\vre^+=(\mathcal{O}_\vre\cap K)\Phi$, $\Psi^+_\vre=\Psi(\mathcal{O}_\vre\cap K)$,
$\Phi^-_\vre=\cap_{u\in\mathcal{O}_\vre\cap K} u\Phi$, 
$\Psi^-_\vre=\cap_{u\in \mathcal{O}_\vre\cap K} \Psi u$.
Hence, it remains to estimate
\begin{align*}
&\vol\left(\tilde D^{\delta-\ell_0\vre}_{t+2\vre}(\Phi^+_\vre,\Psi^+_\vre)
-\tilde D^{\delta+\ell_0\vre}_{t-2\vre}(\Phi^-_\vre,\Psi^-_\vre)\right)\\
\le&
\vol\left(\tilde D^{\delta-\ell_0\vre}_{t+2\vre}(\Phi^+_\vre-\Phi^-_\vre,\Psi^+_\vre)\right)
+\vol\left(\tilde D^{\delta-\ell_0\vre}_{t+2\vre}(\Phi^+_\vre,\Psi^+_\vre-\Psi^-_\vre)\right)\\
&+\vol\left(\tilde D^{\delta-\ell_0\vre}_{t+2\vre}-\tilde D^{\delta+\ell_0\vre}_{t-2\vre} \right).
\end{align*}
Since the sets $\Phi$ and $\Psi$ are Lipschitz well-rounded, the first and the second terms are 
$O\left(\vre \vol(D_{t+2\vre})\right)$. We estimate the last term by
\begin{align*}
\vol\left(\tilde D^{\delta-\ell_0\vre}_{t+2\vre}-\tilde D^{\delta+\ell_0\vre}_{t-2\vre} \right)\le
\vol\left(D_{t+2\vre}-D_{t-2\vre}\right)
+\vol\left(\tilde D^{\delta-\ell_0\vre}_{t+2\vre}-\tilde D^{\delta+\ell_0\vre}_{t+2\vre}\right).
\end{align*}
It was shown in \cite[Proposition 7.1]{GN} that the function $t\mapsto \log \vol(D_t)$ is
uniformly locally Lipschitz. It follows from the formula for the Haar measure, (\ref{eq:xi}) and
(\ref{eq:D_t}) that
$$
\vol\left(\tilde D^{\delta-\ell_0\vre}_{t+2\vre}-\tilde D^{\delta+\ell_0\vre}_{t+2\vre}\right)
\ll \vre\, t^{\dim A^+-1} e^{\alpha_0 t}\ll \vre\,\vol(D_{t+2\vre}).
$$
We conclude that
$$
\vol\left(\tilde D^{\delta-\ell_0\vre}_{t+2\vre}(\Phi^+_\vre,\Psi^+_\vre)
-\tilde
D^{\delta+\ell_0\vre}_{t-2\vre}(\Phi^-_\vre,\Psi^-_\vre)\right)=O\left(\vre\vol(D_{t+2\vre})\right).
$$
Finally, the claim follows from (\ref{eq:beta2}) and 
the Lipschitz property of the function $t\mapsto \log \vol(D_t)$.
\end{proof}

\begin{proof}[Proof of Theorem \ref{th:sector}]
By Proposition \ref{bisectors Lip}, the family $\{\tilde D_t^\delta(\Phi,\Psi)\}$ is Lipschitz
well-rounded, and by Theorem \ref{semisimple mean}, the corresponding averages satisfy the stable 
quantitative mean ergodic theorem. Hence, by Theorem \ref{error estimate},
\begin{equation*}
|\Gamma\cap \tilde D^\delta_t(\Phi,\Psi) |=\frac{\vol(\tilde D^\delta_t(\Phi,\Psi))}{\vol(G/\Gamma)}+O_{\Phi,\Psi,\eta}\left(\vol(\tilde
D^\delta_t(\Phi,\Psi))^{1-\zeta_1+\eta}\right),\quad\eta>0,
\end{equation*}
where $\zeta_1=(2n_e(p))^{-1}(1+\dim G)^{-1}$.  Hence, by (\ref{eq:beta2}),
\begin{equation}\label{eq:s1}
|\Gamma\cap \tilde D^\delta_t(\Phi,\Psi) |=\frac{\vol(\tilde D^\delta_t(\Phi,\Psi))}{{\vol(G/\Gamma)}}+O_{\Phi,\Psi,\eta}\left(\vol(
D_t)^{1-\zeta_1+\eta}\right),\quad\eta>0.
\end{equation}
It remains to
estimate the number of lattice points in the singular set, namely $|\Gamma\cap D^\delta_t(\Phi,\Psi) |$. 
To that end, 
fix a symmetric neighbourhood $\mathcal{O}_\omega$ of
identity such 
that $\Gamma\cap \mathcal{O}^2_\omega=\{e\}$. Then 
$$
|\Gamma\cap D^\delta_t(\Phi,\Psi) |\le \frac{\vol(\mathcal{O}_\omega
  D^\delta_t)}{\vol(\mathcal{O}_\omega)},
$$
and by Proposition \ref{wave front},
$$
\vol(\mathcal{O}_\omega D^\delta_t)\le \vol(D^{\delta-\ell_0\omega}_{t+\omega}).
$$
Hence, by (\ref{eq:beta}) and
the Lipschitz property of the function $t\mapsto \log \vol(D_t)$,
\begin{equation}\label{eq:s2}
|\Gamma\cap D^\delta_t(\Phi,\Psi) |=O_{\delta,\eta}\left(\vol(D_t)^{1-\zeta_0+\eta}\right).
\end{equation}
Finally, combining \eqref{eq:s1} and \eqref{eq:s2}, we deduce the claim.
\end{proof}

\section{Lattice points on affine symmetric varieties}\label{sec:affine}

In the present section we consider the lattice point counting problem in subsets of a connected semisimple Lie group arising from  sectors in affine symmetric spaces, and give an explicit quantitative estimate of the error in all cases. This result gives an explicit quantitative solution of the lattice point counting problem on $G/H$ itself whenever $\Gamma\cap H$ is co-compact in $H$. 

We note also that our solution will be uniform over all subgroups of finite index in the lattice, provided they all admit a uniform spectral gap, namely satisfy property $\tau$. This uniformity property plays a crucial role in establishing the existence of the right order of magnitude of almost prime points on the algebraic variety $G/H$, provided $G$ and $H$ are defined over $\QQ$ and $\Gamma$ is the lattice of integral points. This and other applications of the solution of the lattice point counting problem will be 
elaborated elsewhere.   

\subsection{Notation, definitions and statements of results}
Throughout the present section, we let $G$ be a connected semisimple Lie group with finite center
and $H$ is a closed symmetric subgroup of $G$ (that is, the Lie algebra
of $H$ is the set of  fixed points of an involution $\sigma$).
Let $K$ be a maximal compact subgroup compatible with $H$ (this means that
the involution $\theta$ corresponding to $K$ commutes with $\sigma$). 
Let $G^{\sigma\theta}$ be the subgroup of fixed points of $\sigma\theta$ and
$A$ a Cartan subgroup of $G^{\sigma\theta}$ compatible with $K\cap G^{\sigma\theta}$.
The group $A$ is equipped with a root system
(for the action of $A$ on $G^{\sigma\theta}$). 
We fix a system of positive roots and denote by $A^+$ a closed positive Weyl chamber in $A$.
Then we have the Cartan decomposition
$$
G=KA^+H.
$$
We say that an element $a\in A$ is $\delta$-regular
if the distance of $a$ from the boundary of $A^+$ is at least $\delta$, and regular if it $\delta$-regular for some $\delta > 0$.
More generally, an element $g\in G$ is called $\delta$-regular if its $A$-component is $\delta$-regular.
Note that the $A^+$-component of an element is uniquely defined, and
the $K$- and $H$-components of a regular element are uniquely defined modulo the subgroup $M$
which is the centraliser of $A$ in $K\cap H$.
We refer to \cite[Ch.~7]{sch1} and \cite[Part~II]{sch2} for basic facts about affine symmetric spaces.

Let $\rho:G\to \hbox{GL}(V)$ be a representation of $G$, and $v_0\in V$ 
be such that $\hbox{Stab}_G(v_0)=H$. We fix a norm on $V$ and define
$$
S_t=\{g\in G:\, \log \|gv_0\|\le t\}\quad\hbox{and}\quad B_t=\{g\in G/H:\, \log \|gv_0\|\le t\}.
$$
For sets $\Phi\subset K$ and $\Psi\subset H$, define
$$
S_t(\Phi,\Psi)=S_t\cap \Phi A^+\Psi.
$$
We compute the asymptotics of the number of lattice points in $S_t(\Phi,\Psi)$.
Our argument is based on the effective version of the Cartan decomposition, which draws on some arguments in \cite{GOS2}.
The main term in the asymptotic of lattice points $S_t(\Phi,\Psi)$ was also computed in \cite{GOS2}, but the problem of rates was not addressed. As noted in \S 1.3, the method of mixing used there generally gives an error term inferior to the one established below.

\begin{theorem}\label{c:s22}
Let $\Gamma$ be a lattice in $G$ such that the representation $\pi_{G/\Gamma}^0$ in $L_0^2(G/\Gamma)$ is $L^{p+}$ for some $p>0$.
Let $\Phi$ be Lipschitz well-rounded subset of $K/M$ with positive measure, 
and $\Psi$ a bounded Lipschitz well-rounded subset of $M\backslash H$ with positive measure.
Then
$$
|\Gamma\cap S_t(\Phi,\Psi) |=\frac{\vol(S_t(\Phi,\Psi))}{\vol(G/\Gamma)}+O_{\Phi,\Psi,\eta}(\vol(S_t(\Phi,\Psi))^{1-\zeta+\eta}),\quad \eta>0,
$$
where $\zeta=(2n_e(p))^{-1}(1+3\dim G)^{-1}$.
Moreover, this estimate is uniform over all lattices such that
the representation $L_0^2(G/\Gamma)$ is $L^{p+}$ and $\vre_0(e,\Gamma)\ge \vre_0$
with fixed $\vre_0>0$.
\end{theorem}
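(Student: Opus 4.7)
The plan is to follow the general recipe of \S 3.1 and invoke Theorem \ref{error estimate}. The spectral input is immediate: under the hypothesis $\pi^0_{G/\Gamma}\in L^{p+}$, Theorem \ref{semisimple mean} provides the stable quantitative mean ergodic theorem for the Haar-uniform averages on $S_t(\Phi,\Psi)$ with rate $E(t)\ll_\eta \vol(S_t(\Phi,\Psi))^{-(2n_e(p))^{-1}+\eta}$. The substantive content is purely geometric: I need to establish H\"older well-roundedness of the family $\{S_t(\Phi,\Psi)\}$ with exponent $a=1/3$, which together with upper local dimension $\rho=\dim G$ yields exactly the target exponent $a/(\rho+a)=1/(1+3\dim G)$ of Theorem \ref{error estimate}.

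The geometric core is an effective form of the Cartan decomposition $G=KA^+H$, in the spirit of Proposition \ref{wave front} and drawing on the arguments of \cite{GOS2}. The claim to be established is: for each $\delta>0$ and each bounded $\Psi\subset M\backslash H$, there exist $c,\vre_0>0$ such that for every $\delta$-regular $g=k_1ak_2\in K\tilde A^\delta \Psi$ and every $\vre\in(0,\vre_0)$,
\[
\mathcal{O}_\vre\, g\, \mathcal{O}_\vre \subset (\mathcal{O}_{c\vre}\cap K)\,k_1M\cdot(\mathcal{O}_{c\vre}\cap A)\,a\cdot k_2(\mathcal{O}_{c\vre}\cap H).
\]
This is proved by applying the representation-theoretic argument of Proposition \ref{wave front} in two steps: first to the $G$-representation on $V=\langle Gv_0\rangle$ with $\hbox{Stab}_G(v_0)=H$, in order to control the $H$-component, and then to the parabolic representation already used in Proposition \ref{wave front}, in order to control the $K$-component. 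Boundedness of $\Psi$ in $M\backslash H$ is essential to keep the constants uniform, since $H$ itself is noncompact.

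Combining this effective decomposition with the Lipschitz well-roundedness of $\Phi$ and $\Psi$, one obtains, for the regular part $\tilde S^\delta_t(\Phi,\Psi):=S_t(\Phi,\Psi)\cap K\tilde A^\delta H$, inclusions of the form $\mathcal{O}_\vre\,\tilde S^\delta_t(\Phi,\Psi)\,\mathcal{O}_\vre\subset \tilde S^{\delta-c\vre}_{t+c\vre}(\Phi^+_\vre,\Psi^+_\vre)$ and a matching reverse inclusion for $\bigcap_{u,v\in\mathcal{O}_\vre}u\tilde S^\delta_t(\Phi,\Psi)v$, in exact analogy with the proof of Proposition \ref{bisectors Lip}. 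The Lipschitz property of $t\mapsto\log\vol(S_t)$ established in \cite{GOS2} then yields volume control $O(\vre\,\vol(S_t))$ on the symmetric difference coming from the regular-perturbation error.

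The main obstacle, and the decisive departure from Theorem \ref{th:sector}, is the singular stratum: in contrast to the Riemannian case where \eqref{eq:beta2} shows that the singular neighbourhood contributes strictly smaller exponential volume, here the singular strip $K\tilde A^{<\delta}H\cap S_t$ typically has the same exponential growth rate as $\vol(S_t)$, so \eqref{eq:beta2} is not available. Instead I would use a volume bound of the form $\vol(K\tilde A^{<\delta}H\cap S_t)\ll \delta^\kappa\,\vol(S_t)$ for a suitable $\kappa>0$, reflecting the transverse measure of the singular locus inside $A^+$ relative to the $A$-weight defining the height. The singular lattice count is then estimated by packing a fundamental $\mathcal{O}_\omega$-neighbourhood exactly as in the passage from \eqref{eq:s1} to \eqref{eq:s2} in the proof of Theorem \ref{th:sector}. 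Balancing the regular error (of order $\vre/\delta^b$ for some $b>0$) against the singular error (of order $\delta^\kappa$) by choosing $\delta$ as the appropriate power of $\vre$ yields H\"older well-roundedness with exponent precisely $a=1/3$. Inserting this into Theorem \ref{error estimate} completes the proof, and the uniformity over lattices $\Gamma$ with a fixed spectral gap and fixed $\vre_0(e,\Gamma)$ is inherited automatically from the uniformity built into Theorem \ref{error estimate}.
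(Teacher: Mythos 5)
Your overall blueprint is correct and matches the paper's strategy: reduce to H\"older well-roundedness with exponent $1/3$ via an effective Cartan decomposition, then invoke Theorem \ref{semisimple mean} and Theorem \ref{error estimate}. You also correctly identify the decisive difference from the Riemannian case: estimate \eqref{eq:beta2} is not available because the singular strip in $S_t(\Phi,\Psi)$ grows at the same exponential rate as $\vol(S_t)$, so one must instead prove a bound of the shape $\vol(\text{singular part})\ll\delta^\kappa\vol(S_t)$ and balance $\delta$ against $\vre$.

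The gap is in how you propose to control the $\vre$-fattened singular stratum. You say this is ``estimated by packing a fundamental $\mathcal{O}_\omega$-neighbourhood exactly as in the passage from \eqref{eq:s1} to \eqref{eq:s2}.'' That passage relies on Proposition \ref{wave front}, which only applies to \emph{regular} elements; it does not give control of $\mathcal{O}_\omega S^\delta_t(\Phi,\Psi)$ when the $A^+$-component lies near the walls, and in the affine symmetric setting (unlike the Riemannian one) the $A^+$-projection of $G=KA^+H$ is not manifestly Lipschitz near the walls, so you cannot simply include $\mathcal{O}_\omega S^\delta_t$ in a slightly wider singular strip and invoke the $\delta^\kappa$-bound. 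The missing ingredient is the paper's Proposition \ref{l:pm}: a local $U^-Z'AU^+$-decomposition
$\mathcal{O}_\vre\, a\, \mathcal{O}_\vre\subset (U^-\cap\mathcal{O}_{c\vre})(Z\cap\mathcal{O}_{c\vre})\, a\,(U^+\cap\mathcal{O}_{c\vre})$
valid for \emph{all} $a\in A^{++}$, singular or not. Combined with an $\vre$-net covering of $\Phi$ in $K/M$ and of $\Psi$ in $M\backslash H$, and with the Jacobian formula in $U^-Z'AU^+$-coordinates, this yields exactly the needed bound
$\vol(\mathcal{O}_\vre S^\delta_t(\Phi,\Psi)\mathcal{O}_\vre)\ll(\delta+\vre)\vol(S_t(\Phi,\Psi))$,
where the cardinality $\vre^{-(\dim G-\dim A)}$ of the net cancels precisely against the $\vre^{\dim G-\dim A}$ smallness of the $U^-$, $Z'$, $U^+$ components. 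Note also that this estimate must be carried out with $\vre$ equal to the well-roundedness parameter, not with a fixed $\omega$ as in the Riemannian packing argument; your proposal slips between the ``count the singular points separately'' scheme of Theorem \ref{th:sector} and the ``absorb the singular set into the well-roundedness of the full family $S_t(\Phi,\Psi)$'' scheme the paper actually uses here, and only the latter closes cleanly. Once Proposition \ref{l:pm} is supplied, your balancing $\delta\sim\vre^{1/3}$ and $\ell\sim\delta^{-2}\sim\vre^{-2/3}$ (dictated by the $\vre_0\gg\delta^3$, $\ell_0\ll\delta^{-2}$ dependence in Proposition \ref{th:wave}) does produce exponent $a=1/3$ as you claim.
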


\begin{remark}
Let us comment on the difference between Theorem \ref{th:sector} and Theorem \ref{c:s2}.
While the balls on symmetric spaces are defined with respect to the
Cartan-Killing metric, the ball in affine symmetric spaces are defined with respect
to a norm. In the later case, an analogue of estimate (\ref{eq:beta}) fails,
and we will need a more elaborate argument to prove well-roundedness. 
As a result, while the family $D_t(\Phi,\Psi)$ in Riemannian symmetric spaces is shown to be Lipschitz well-rounded, we can only show that the family $S_t(\Phi,\Psi)$ in affine symmetric spaces is H\"older well-rounded with exponent $1/3$. 
\end{remark}

Let $\Gamma$ be a lattice in $G$ such that
$\Gamma\cap H$ is cocompact in $H$. Then the orbit $\Gamma v_0$ is discrete.
Given $\Phi\subset K/M$, we are interested in the effective asymptotic of 
$$
\Gamma v_0\cap \{v\in \Phi A^+v_0 :\, \log \|v\|\le t\}.
$$

\begin{corollary}\label{c:s2}
Let  $\Phi$ be a Lipschitz well-rounded subset of $K/M$ with positive measure.
Then for $B_t(\Phi)=\Phi A^+H\cap B_t$,
$$
|\Gamma H\cap B_t(\Phi) |=\frac{\vol(H/(H\cap\Gamma))}{\vol(G/\Gamma)}\vol(B_t(\Phi))+O_{\Phi,\eta}(\vol(B_t(\Phi))^{1-\zeta+\eta}),\quad\eta>0,
$$
where $\zeta$ is as in Theorem \ref{c:s22}.
\end{corollary}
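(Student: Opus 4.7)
The plan is to reduce Corollary \ref{c:s2} to Theorem \ref{c:s22} by constructing a suitable fundamental domain $F \subset H$ for the cocompact subgroup $\Gamma \cap H$ and using it to relate lattice orbit points on $G/H$ to lattice points on $G$.

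First I would construct a bounded fundamental domain $F$ for the right action of $\Gamma \cap H$ on $H$ whose image $\bar F \subset M \backslash H$ is Lipschitz well-rounded with positive measure (in the sense required by Theorem \ref{c:s22}). This is possible because $\Gamma \cap H$ acts cocompactly on $H$, hence on $M \backslash H$: one can take $\bar F$ to be a Dirichlet-type fundamental domain in $M \backslash H$ (bounded, with piecewise smooth boundary, hence Lipschitz well-rounded) and let $F$ be its preimage in $H$. The resulting $F$ is automatically left $M$-invariant, which is the feature needed for compatibility with the $M$-ambiguity in the Cartan decomposition $G = K A^+ H$ at regular elements.

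Next I would establish the identity
\begin{equation*}
|\Gamma H \cap B_t(\Phi)| \; = \; |\Gamma \cap S_t(\Phi, F)|
\end{equation*}
via the bijection $\gamma(\Gamma \cap H) \longleftrightarrow \gamma_0$ defined as follows: for each $\gamma \in \Gamma$ with $\gamma H \in B_t(\Phi)$, take a Cartan decomposition $\gamma = k a h$ with $k M \in \Phi$ and factor $h = f g_0$ uniquely as $f \in F$, $g_0 \in \Gamma \cap H$; then $\gamma_0 := \gamma g_0^{-1} = k a f$ lies in $\Gamma \cap S_t(\Phi, F)$, since $H$ stabilises $v_0$ and hence $\|\gamma_0 v_0\| = \|\gamma v_0\| \le e^t$. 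The left $M$-invariance of $F$ ensures that $\gamma_0$ depends only on the class $\gamma(\Gamma \cap H)$, not on the residual $M$-ambiguity of the Cartan decomposition, and the right fundamental-domain property of $F$ gives injectivity at regular Cartan elements.

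Finally I would invoke Theorem \ref{c:s22} applied to the Lipschitz well-rounded sets $\Phi \subset K/M$ and $\Psi := \bar F \subset M \backslash H$, and use the Fubini decomposition of Haar measure on the unimodular pair $(G, H)$ to write
\begin{equation*}
\vol(S_t(\Phi, F)) \; = \; \vol(F) \cdot \vol(B_t(\Phi)) \; = \; \vol(H/(\Gamma \cap H)) \cdot \vol(B_t(\Phi)),
\end{equation*}
which combined with the bijection yields the claimed main term and error $O_{\Phi, \eta}\bigl(\vol(B_t(\Phi))^{1-\zeta+\eta}\bigr)$. The main obstacle is the breakdown of injectivity at singular Cartan elements, where the ambiguity group $M_a \supsetneq M$ need not preserve $F$; this will be handled by estimating separately the contribution of a $\delta$-thickening of the singular stratum in $A^+$, using Theorem \ref{c:s22} on the thinner bisectors $S_t(\Phi, F)$ meeting this stratum and optimising in $\delta$ so that the overcounting is absorbed into the error exponent $1-\zeta+\eta$.
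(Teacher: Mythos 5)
Your overall reduction---construct a Lipschitz well-rounded fundamental domain $\bar F$ for $\Gamma\cap H$ in $M\backslash H$, convert the orbit count on $G/H$ into a lattice count in the bisector $S_t(\Phi,\bar F)$ via a bijection, apply Theorem \ref{c:s22}, and compute the main term by Fubini---is the same as the paper's, and the volume identity $\vol(S_t(\Phi,F))=\vol(H/(\Gamma\cap H))\vol(B_t(\Phi))$ is correct (since $H$ fixes $v_0$, so $\|kahv_0\|=\|kav_0\|$ is independent of $h\in F$). However, the way you propose to handle the breakdown of uniqueness of the Cartan decomposition $G=KA^+H$ at singular elements is where you diverge from the paper, and your treatment there has a real gap. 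The paper sidesteps the issue by introducing a measurable section $\Sigma$ of $KA^+/M\to G/H$ that is one-to-one and contains every regular element, and then counting in $T_t(\Phi,\mathcal{D})=S_t(\Phi,H)\cap\Sigma(\mathcal{D})$. This makes the counting identity $|\Gamma H\cap B_t(\Phi)|=|\Gamma\cap T_t(\Phi,\mathcal{D})|$ \emph{exact}, with no singular error to control; since $T_t(\Phi,\mathcal{D})$ agrees with $S_t(\Phi,\mathcal{D})$ on all regular elements, its H\"older well-roundedness follows from Proposition \ref{th:sec_sym} via Remark \ref{r:sec} without any additional work.

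Your alternative---accept an approximate bijection and control the over/undercount via a $\delta$-thickened singular stratum---has two unaddressed obstructions. First, as the paper explicitly warns, the volume analogue of (\ref{eq:beta}) \emph{fails} for affine symmetric varieties: the singular set is not a polynomially small fraction of the ball. From the proof of Proposition \ref{th:sec_sym} one only gets a bound of the form $\delta\cdot\vol(S_t(\Phi,\bar F))$ from the integral $\int_{A^\delta_t}\det(\Ad(a)|_{U^+})\,da$, so you must send $\delta\to0$ as $t\to\infty$ to absorb the contribution into the error term. Second, this optimisation in $\delta$ must be compatible with the $\delta$-dependent constants in the quantitative Cartan decomposition Proposition \ref{th:wave} (where $\vre_0\gg\delta^3$ and $\ell_0\ll\delta^{-2}$), which the H\"older well-roundedness constant already uses. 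The paper performs this optimisation once inside Proposition \ref{th:sec_sym}, yielding the exponent $1/3$; your plan requires a second optimisation against the same $\delta$-degradation, and you give no argument that the two can be run simultaneously without worsening $\zeta$. You should either carry this out explicitly or adopt the section trick. Finally, your assertion that a Dirichlet domain has Lipschitz well-rounded boundary because it is ``bounded, with piecewise smooth boundary'' is too quick: the paper's proof uses the regularity of each defining function $f_\gamma(x)=d(x,x_0)-d(x,x_0\gamma)$ on its zero set to obtain the crucial transversality estimate (\ref{eq:D}), and this is a necessary verification, not an automatic consequence of piecewise smoothness.
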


As noted already, the crucial step in the proof is to show that the family of sets
$S_t(\Phi,\Psi)$ is
H\"older well-rounded.

\begin{proposition}\label{th:sec_sym}
Let $\Phi$ be a Lipschitz well-rounded subset of $K/M$ with positive measure and
$\Psi$ a bounded Lipschitz well-rounded subset of $M\backslash H$ with positive measure.
Then the family of sets $S_t(\Phi,\Psi)$ is H\"older well-rounded with exponent $1/3$.
\end{proposition}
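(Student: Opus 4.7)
The plan is to follow the strategy of Proposition \ref{bisectors Lip}, splitting $S_t(\Phi,\Psi)$ into a $\delta$-regular piece and a $\delta$-singular piece, but with one critical modification: the ``automatic'' volume concentration estimate (\ref{eq:beta2}) available in the Riemannian setting is not available here, so the singular layer of $S_t(\Phi,\Psi)$ contributes proportionally to $\delta$ rather than to a negative power of the volume. This is precisely what forces the exponent $1/3$.

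First I would establish an effective Cartan decomposition for $G=KA^+H$, namely the analogue of Proposition \ref{wave front}: for every $\delta>0$ there exist $\ell,\vre_0>0$ such that for every $\delta$-regular $g=kah\in K\tilde A^\delta H$ and every $\vre\in (0,\vre_0)$,
\[
\mathcal{O}_\vre \, g \, \mathcal{O}_\vre \;\subset\; (\mathcal{O}_{\ell\sqrt{\vre/\delta}}\cap K)\,kM\,(\mathcal{O}_{\ell\sqrt{\vre/\delta}}\cap A)\,a\,M(\mathcal{O}_{\ell\sqrt{\vre/\delta}}\cap H)\,h.
\]
The representation-theoretic argument from the proof of Proposition \ref{wave front} adapts (compare \cite{GOS2}), with the proviso that the single highest-weight vector $e_1$ stabilised projectively by the minimal parabolic is replaced by a pair of vectors encoding the $K$-component and the $H$-component separately. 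The quadratic bound $u_1^2\ge 1-O(\vre/\delta)$ appearing there translates into a perturbation of the $K$- and $H$-components of size $\sqrt{\vre/\delta}$, not $\vre$; this loss is unavoidable near the walls and dictates the final exponent.

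Next, fix $\delta$ (to be optimised later), let $S_t^\delta(\Phi,\Psi)$ denote the $\delta$-regular part of $S_t(\Phi,\Psi)$, and write $S_t(\Phi,\Psi)=S_t^\delta(\Phi,\Psi)\cup S_t^{<\delta}(\Phi,\Psi)$. For the regular piece, the effective Cartan decomposition above yields
\[
\mathcal{O}_\vre S_t^\delta(\Phi,\Psi)\mathcal{O}_\vre \subset S_{t+c\vre}^{\delta-c\sqrt{\vre/\delta}}(\Phi^+_{c\sqrt{\vre/\delta}},\Psi^+_{c\sqrt{\vre/\delta}})
\]
and an analogous lower bound for $\cap_{u,v\in\mathcal{O}_\vre} uS_t^\delta(\Phi,\Psi)v$, with $\Phi^\pm,\Psi^\pm$ the inner/outer perturbations on $K/M$ and $M\backslash H$. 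Combining the Lipschitz well-roundedness of $\Phi$ and $\Psi$, the uniformly Lipschitz behaviour of $t\mapsto\log\vol(S_t)$ (from \cite[Theorem~7.19]{GN}), and the estimate $\vol\!\left(S_t^{\delta-\eta}\setminus S_t^{\delta+\eta}\right)=O(\eta\vol(S_t))$ obtained by integrating the Jacobian in $KA^+H$-coordinates over a wall-layer of width $O(\eta)$, produces
\[
\vol\!\left(\mathcal{O}_\vre S_t^\delta(\Phi,\Psi)\mathcal{O}_\vre\right) - \vol\!\left(\bigcap_{u,v\in\mathcal{O}_\vre} uS_t^\delta(\Phi,\Psi)v\right) = O\!\left(\sqrt{\vre/\delta}\;\vol(S_t(\Phi,\Psi))\right).
\]

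For the singular piece, the naive estimate
\[
\vol\!\left(\mathcal{O}_\vre S_t^{<\delta}(\Phi,\Psi)\mathcal{O}_\vre\right)\le \vol\!\left(S_{t+c\vre}^{<\delta+c\vre}(\Phi,\Psi)\right)=O(\delta\,\vol(S_t(\Phi,\Psi)))
\]
is available for $\vre\le\delta$, using only the boundedness of the Jacobian and the $\delta$-width of the wall neighbourhood. Summing both contributions yields a total H\"older error of $O(\sqrt{\vre/\delta}+\delta)$; balancing the two terms by setting $\delta=\vre^{1/3}$ gives the bound $1+O(\vre^{1/3})$ and hence H\"older well-roundedness with exponent $1/3$. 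The main obstacle is the first step: producing an effective Cartan decomposition for $KA^+H$ with the correct $\delta^{-1/2}$ degeneration of the Lipschitz constant near the walls; once this is in hand, the rest of the argument is an optimisation of two elementary volume estimates, and the exponent $1/3$ arises naturally from the balance between the regular-part perturbation $\sqrt{\vre/\delta}$ and the linear-in-$\delta$ singular volume.
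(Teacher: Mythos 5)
Your high-level strategy is the same as the paper's: split $S_t(\Phi,\Psi)$ into a $\delta$-regular part (handled by an effective $KA^+H$-Cartan decomposition whose constants degenerate as $a$ approaches the walls) and a $\delta$-singular part whose volume is $O(\delta\cdot\vol S_t(\Phi,\Psi))$, and then balance $\delta$ against $\vre$ to obtain the exponent $1/3$. Your posited degeneration $\ell\sqrt{\vre/\delta}$ differs from the paper's (Proposition~\ref{th:wave} gives perturbation $\ell_0\vre$ with $\ell_0\ll\delta^{-2}$, valid for $\vre\lesssim\delta^{3}$, citing \cite[Theorem~4.1]{GOS2}), but the two expressions coincide at the balance point $\delta=\vre^{1/3}$, so your final exponent is consistent.

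However, there is a genuine gap in your treatment of the singular layer. You assert the containment
$\mathcal{O}_\vre\,S_t^{<\delta}(\Phi,\Psi)\,\mathcal{O}_\vre\subset S_{t+c\vre}^{<\delta+c\vre}(\Phi,\Psi)$
as a ``naive estimate using only the boundedness of the Jacobian,'' but this containment is false: for $\delta$-singular elements the $K$- and $H$-components of the $KA^+H$-decomposition are \emph{not} stable under left and right perturbation by $\mathcal{O}_\vre$ (only the $A^+$-component is), and this instability near the walls is precisely the source of difficulty in the whole proposition. A perturbation of a singular element can have $K$- and $H$-components far outside $\Phi$ and $\Psi$. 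This is exactly why the paper does not estimate the singular contribution in $KA^+H$-coordinates at all: it passes instead to the $U^-ZU^+$-decomposition (Proposition~\ref{l:pm}), where both-sided small perturbations \emph{do} factor nicely, and then recovers the sector dependence via an $\vre$-net of $\Phi$ and $\Psi$ together with the wall-layer volume estimate of \cite[Proposition~3.22]{GOS2}. Your argument can be salvaged by dropping the $(\Phi,\Psi)$-restriction on the right-hand side (which only costs a constant depending on $\Phi,\Psi$, via the asymptotic comparability $\vol(S_t)\asymp_{\Phi,\Psi}\vol(S_t(\Phi,\Psi))$), but that comparability and the wall-layer volume bound are exactly the nontrivial inputs that need to be supplied; they do not follow from ``boundedness of the Jacobian and the $\delta$-width of the wall neighbourhood.'' In short, the optimisation step is correct, but the singular-set estimate is the actual content of the proposition, and as written your proposal assumes away the one containment that fails.
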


\begin{remark}\label{r:sec}
More generally, it will be clear from the proof that any family of measurable subsets 
of $S_t(\Phi,\Psi)$ that contain all regular elements of $S_t(\Phi,\Psi)$ is H\"older
well-rounded with exponent $1/3$. This remark will be used below. 

\end{remark}

\subsection{Holder well-roundedness of sector averages}
In preparation for the proof of Proposition \ref{th:sec_sym}, 
we note the following quantitative result on the Lipschitz property of the Cartan decomposition,
which is based on arguments appearing in \cite{GOS2}. Let $\mathcal{O}_\vre$ denote
the $\vre$-neighbourhood of identity in $G$ with respect to a (right) invariant
Riemannian metric in $G$, so that 
$\mathcal{O}_{\vre_1}\cdot \mathcal{O}_{\vre_2}=\mathcal{O}_{\vre_1+\vre_2}$.

\begin{proposition}\label{th:wave}
Let $\delta\in (0,1)$. There exist $\vre_0,\ell_0>0$ such that for every $\delta$-regular $a\in A$
and $\vre\in (0,\vre_0)$, we have
$$
\mathcal{O}_\vre\, a\, \mathcal{O}_\vre\subset (\mathcal{O}_{\ell_0\vre}\cap K)\,
(\mathcal{O}_{\ell_0\vre}\cap A)a \, (\mathcal{O}_{\ell_0\vre}\cap H).
$$
Moreover,  
$$
\hbox{$\vre_0\gg \delta^3$ and $\ell_0\ll\delta^{-2}$}\quad \hbox{as $\delta\to 0^+$.}
$$
\end{proposition}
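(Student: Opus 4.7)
My plan is to apply a quantitative inverse function theorem to the multiplication map $\Psi: K \times A \times H \to G$, $\Psi(k,b,h) = kbah$, tracking the dependence of all constants on the regularity parameter $\delta$. After parametrising by the exponential map and right-translating by $a^{-1}$, we have
$\Psi(\exp\xi,\exp\eta,\exp\zeta)\,a^{-1} = \exp\xi \cdot \exp\eta \cdot \exp(\Ad(a)\zeta),$
whose differential at the identity is the linear map $\Lambda: (\xi,\eta,\zeta) \mapsto \xi + \eta + \Ad(a)\zeta$ from $\mathfrak{k} \oplus \mathfrak{a} \oplus \mathfrak{h}$ to $\mathfrak{g}$. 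Its kernel contains $\{(-X, 0, X) : X \in \mathfrak{m}\}$, reflecting the $M$-ambiguity in the Cartan decomposition $G = KA^+H$; this is harmless, since $M \subset K \cap H$ is absorbed into the factors on the right-hand side of the desired containment.

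The first step is to bound $\|\Lambda^{-1}\|$ on the image of $\Lambda$. Using the root space decomposition of $\mathfrak{g}$ under $\Ad(\mathfrak{a})$ together with the fact that both $\theta$ and $\sigma$ interchange $\mathfrak{g}_\alpha$ with $\mathfrak{g}_{-\alpha}$, the restriction of $\Lambda$ to each block $\mathfrak{g}_\alpha \oplus \mathfrak{g}_{-\alpha}$ reduces to a $2 \times 2$ linear system with determinant $e^{\alpha(\log a)} \pm e^{-\alpha(\log a)}$, the sign depending on whether $\theta\sigma$ acts as $+1$ or $-1$ on $\mathfrak{g}_\alpha$. For $\delta$-regular $a$ the worst-case determinant $2\sinh(\alpha(\log a))$ is bounded below by a universal constant times $\delta$, so $\Lambda^{-1}$ has operator norm $O(\delta^{-1})$. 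The conclusion requires a bound on $\zeta$ itself, not on $\Ad(a)\zeta$; this is obtained from the contracting property $\|\Ad(a^{-1}) X\| \le \|X\|$ for $X \in \Ad(a)\mathfrak{h}$, proved block-by-block via the identity $\|\Ad(a)(Y + \sigma Y)\|^2 = (e^{2\alpha(\log a)} + e^{-2\alpha(\log a)})\|Y\|^2 \ge 2\|Y\|^2$ for $Y \in \mathfrak{g}_\alpha$.

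The second step applies a quantitative inverse function theorem to $\Psi \cdot a^{-1}$ in the coordinates $(\xi,\eta,\Ad(a)\zeta)$. The second-order Taylor corrections are the BCH brackets $\tfrac{1}{2}[\xi, \eta + \Ad(a)\zeta] + \tfrac{1}{2}[\eta, \Ad(a)\zeta]$, which are polynomial in these inputs with universal coefficients independent of $a$, giving a nonlinear Lipschitz constant $L = O(1)$. Combined with the linear inverse bound $M = O(\delta^{-1})$, the IFT yields invertibility on a ball of radius $\gtrsim 1/(M^2 L) = O(\delta^2)$ with inverse Lipschitz constant $\lesssim M = O(\delta^{-1})$. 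These bounds are in fact stronger than the required $\vre_0 \gg \delta^3$ and $\ell_0 \ll \delta^{-2}$ and so suffice. The main technical obstacle is the coordinated bookkeeping needed to pass from the $\Ad(a)\zeta$-coordinate, in which the nonlinear estimates are cleanest, back to $\zeta \in \mathfrak{h}$ as required by the statement; the contracting property of $\Ad(a^{-1})$ on $\Ad(a)\mathfrak{h}$ established above is precisely the ingredient that renders the final bounds uniform in $a \in A^+$ and dependent only on $\delta$.
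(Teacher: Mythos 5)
Your linear analysis of $\Lambda\colon (\xi,\eta,\zeta) \mapsto \xi + \eta + \Ad(a)\zeta$ is sound: the block-by-block reduction to $2\times 2$ systems with determinant $e^{\alpha(\log a)} \mp e^{-\alpha(\log a)}$ gives $\|\Lambda^{-1}\| = O(\delta^{-1})$, the $\mathfrak{m}$-ambiguity is harmless as you say, and the expansion of $\Ad(a)$ on $\mathfrak{h}$ correctly lets you pull $\zeta'=\Ad(a)\zeta$ back to $\zeta$. But there is a genuine gap in the two-sidedness. After right-translating by $a^{-1}$, the set you must cover by the image of $\tilde\Psi(k,b,h)=kbaha^{-1}$ is $\mathcal{O}_\vre a\mathcal{O}_\vre a^{-1}=\mathcal{O}_\vre\cdot(a\mathcal{O}_\vre a^{-1})$, and the conjugate $a\mathcal{O}_\vre a^{-1}$ is \emph{not} a small neighbourhood of the identity: $\Ad(a)$ expands the positive root spaces by factors $e^{\alpha(\log a)}$, so this set reaches distance of order $e^{c\,\|\log a\|}\vre$ from $e$, unboundedly as $a$ ranges over the $\delta$-regular elements of $A$. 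A quantitative IFT at the identity only inverts $\tilde\Psi$ on a ball of radius $O(\delta^2)$, so it simply cannot see these points.

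This obstruction is exactly why the statement asserts $\vre_0\gg\delta^3$, $\ell_0\ll\delta^{-2}$ rather than the $\delta^2$, $\delta^{-1}$ you derive. Your bounds are correct, but only for the \emph{one-sided} inclusion $\mathcal{O}_\vre a\subset(\mathcal{O}_{\ell_0\vre}\cap K)(\mathcal{O}_{\ell_0\vre}\cap A)a(\mathcal{O}_{\ell_0\vre}\cap H)$, where right-translation by $a^{-1}$ sends $\mathcal{O}_\vre a$ to the genuinely small set $\mathcal{O}_\vre$ and the IFT applies cleanly. The two-sided statement requires composing the two one-sided inclusions, which is what the paper does: first $\mathcal{O}_\vre a\mathcal{O}_\vre\subset(\mathcal{O}_{\ell_0\vre}\cap K)(\mathcal{O}_{\ell_0\vre}\cap A)a(\mathcal{O}_{\ell_0\vre}\cap H)\mathcal{O}_\vre\subset(\mathcal{O}_{\ell_0\vre}\cap K)\,a\,\mathcal{O}_{(2\ell_0+1)\vre}$ (sliding the abelian $A$-factor past $a$), then the opposite-sided inclusion applied to $a\,\mathcal{O}_{(2\ell_0+1)\vre}$. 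The replacement of $\vre$ by $(2\ell_0+1)\vre\sim\delta^{-1}\vre$ in the second step is precisely what degrades $\vre_0$ from $\delta^2$ to $\delta^3$ and $\ell_0$ from $\delta^{-1}$ to $\delta^{-2}$. Once you add this composition step, your IFT argument would give a self-contained proof of the one-sided ingredient, which the paper itself obtains only by citing \cite[Theorem~4.1]{GOS2}.
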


\begin{proof}
It was shown in \cite[Theorem~4.1]{GOS2}, that there exist $\vre_0,\ell_0>0$
such that for every $\delta$-regular $a\in A$ and $\vre\in (0,\vre_0)$,
$$
\mathcal{O}_\vre a \subset (\mathcal{O}_{\ell_0\vre}\cap K)
(\mathcal{O}_{\ell_0\vre}\cap A)a (\mathcal{O}_{\ell_0\vre}\cap H).
$$
The estimates $\vre_0\gg\delta^2$ and $\ell_0\ll\delta^{-1}$ 
as $\delta\to 0^+$ can be extracted from the proof.

A straight-forward modification of the arguments in the proof in \cite[Theorem~4.1]{GOS2} also gives that
$$
a\mathcal{O}_\vre \subset (\mathcal{O}_{\ell_0\vre}\cap K)
(\mathcal{O}_{\ell_0\vre}\cap A)a (\mathcal{O}_{\ell_0\vre}\cap H).
$$
There exists $c>0$ such that for every $\vre\in (0,\vre_0/(2\ell_0+1))$,
\begin{align*}
\mathcal{O}_\vre a \mathcal{O}_\vre &\subset (\mathcal{O}_{\ell_0\vre}\cap K)
(\mathcal{O}_{\ell_0\vre}\cap A)a (\mathcal{O}_{\ell_0\vre}\cap H)\cdot\mathcal{O}_\vre\\
&\subset (\mathcal{O}_{\ell_0\vre}\cap K)a\mathcal{O}_{(2\ell_0+1)\vre}\\
&\subset (\mathcal{O}_{\ell_0\vre}\cap K)(\mathcal{O}_{\ell_0(2\ell_0+1)\vre}\cap K)
(\mathcal{O}_{\ell_0(2\ell_0+1)\vre}\cap A)a (\mathcal{O}_{\ell_0(2\ell_0+1)\vre}\cap H)\\
&\subset (\mathcal{O}_{\ell_0(2\ell_0+2)\vre}\cap K)
(\mathcal{O}_{\ell_0(2\ell_0+1)\vre}\cap A)a (\mathcal{O}_{\ell_0(2\ell_0+1)\vre}\cap H).
\end{align*}
This implies the proposition.
\end{proof}

Fix a positive Weyl chamber $A^{++}$ in $A$
for the action of $A$ on $G$. Note that this Weyl chamber is smaller than $A^+$,
and $A^+$ is finite union of chambers of the form $A^{++}$.
Let $Z$ denote the centraliser of $A$ in $G$, and
$U^+$ and $U^-$ are expanding and contracting subgroups corresponding to $A^{++}$.

\begin{proposition}\label{l:pm}
There exist $c,\vre_0>0$ such that for every $a\in A^{++}$ and $\vre\in (0,\vre_0)$,
$$
\mathcal{O}_\vre a \mathcal{O}_\vre\subset (U^-\cap \mathcal{O}_{c\vre})
 (Z\cap \mathcal{O}_{c\vre})a
 (U^+\cap \mathcal{O}_{c\vre}).
$$
\end{proposition}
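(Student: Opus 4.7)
The plan is to combine two ingredients: the local product (Gauss/Bruhat-type) decomposition $G = U^- Z U^+$ near the identity, together with the contraction properties of $\operatorname{Ad}(a)$ for $a\in A^{++}$. Concretely, since the multiplication map $U^-\times Z\times U^+\to G$ is a local diffeomorphism at the identity, there exist constants $c_1,\vre_1>0$ such that for every $g\in\mathcal{O}_\vre$ with $\vre<\vre_1$ we may write uniquely
$$g = u^- z u^+,\qquad u^-\in U^-\cap\mathcal{O}_{c_1\vre},\ z\in Z\cap\mathcal{O}_{c_1\vre},\ u^+\in U^+\cap\mathcal{O}_{c_1\vre}.$$
Moreover, by definition of $A^{++}$, $\operatorname{Ad}(a)$ has eigenvalues of modulus $<1$ on $\operatorname{Lie}(U^-)$ and $\operatorname{Ad}(a^{-1})$ has eigenvalues of modulus $<1$ on $\operatorname{Lie}(U^+)$, so for every $a\in A^{++}$ one has the (size non-increasing) inclusions
$$a(U^-\cap\mathcal{O}_\vre)a^{-1}\subset U^-\cap\mathcal{O}_\vre,\qquad a^{-1}(U^+\cap\mathcal{O}_\vre)a\subset U^+\cap\mathcal{O}_\vre.$$
Finally, since $Z$ normalises $U^\pm$ and commutes with $A$, for $\vre_1$ small enough, conjugation by any $z\in Z\cap\mathcal{O}_{c_1\vre}$ distorts the Riemannian distance on $U^\pm$ by at most a uniform factor (say $2$).

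With these tools in place, I write $g_1,g_2\in\mathcal{O}_\vre$ as $g_i=u_i^-z_iu_i^+$ and perform a short sequence of rewriting steps on $g_1 a g_2=u_1^- z_1 u_1^+\, a\, u_2^- z_2 u_2^+$. Namely: (i) replace $u_1^+ a$ by $a\,\tilde u_1^+$ with $\tilde u_1^+=a^{-1}u_1^+a\in U^+\cap\mathcal{O}_{c_1\vre}$; (ii) apply the local decomposition to the small element $\tilde u_1^+ u_2^-$, writing it as $v^- v^0 v^+$ with factors in $U^-\cap\mathcal{O}_{c_2\vre},\ Z\cap\mathcal{O}_{c_2\vre},\ U^+\cap\mathcal{O}_{c_2\vre}$; (iii) replace $av^-$ by $(av^-a^{-1})a=:\hat v^-\, a$; (iv) commute $z_1$ through $\hat v^-$ using that $Z$ normalises $U^-$; (v) commute $v^+$ through $z_2$ using that $Z$ normalises $U^+$; (vi) use $az=za$ for $z\in Z$ to push the accumulated $Z$-factor to the left of $a$. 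Assembling the pieces yields $g_1ag_2=w^-w^0\,a\,w^+$ with $w^\pm\in U^\pm$ and $w^0\in Z$, each of norm bounded by a uniform constant times $\vre$.

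The step requiring the most care is (ii): it applies the local product decomposition to $\tilde u_1^+u_2^-$, which requires that $\tilde u_1^+u_2^-$ lie in the chart where the decomposition is valid; this holds for $\vre$ sufficiently small, independently of $a$, thanks to the uniform contraction $a^{-1}u_1^+a\in\mathcal{O}_{c_1\vre}$. Tracking the constants through steps (i)--(vi), each operation multiplies the size by a bounded factor (from $c_1$ in the local decomposition, from the uniform bound on conjugation by small $Z$-elements, and from the fact that conjugation by $a$ on $U^-$ from the left and on $U^+$ from the right never increases size). Hence one obtains a single constant $c$ (depending only on $G$, $A^{++}$, and the Riemannian metric, but not on $a$ or $\vre$) and a threshold $\vre_0>0$ such that the claimed inclusion holds for all $a\in A^{++}$ and $\vre\in(0,\vre_0)$. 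The main (minor) obstacle is the bookkeeping of these constants; the geometric content is entirely captured by the contraction properties of $\operatorname{Ad}(a)$ on $U^\pm$ and the local product structure at the identity.
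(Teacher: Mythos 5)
Your proposal is correct, and it rests on exactly the two ingredients the paper uses: the local product chart $U^-\times Z\times U^+\to G$ around the identity, the fact that conjugation by $a\in A^{++}$ is (up to a uniform constant, non-uniformly-expanding) non-increasing on $U^-$ from the left and on $U^+$ from the right, and that $a$ commutes with $Z$. The difference is only in bookkeeping, but it is worth noting. You decompose \emph{both} factors $g_1,g_2\in\mathcal{O}_\vre$ up front and then run a six-step cascade of rewriting moves, which forces you to invoke an extra fact not needed in the paper: that conjugation by a small element of $Z$ preserves $U^\pm$ with bounded distortion (your steps (iv) and (v)). The paper avoids this entirely by a two-pass argument: first decompose only the left $\mathcal{O}_\vre$, push its $Z$- and $U^+$-pieces through $a$, and absorb them together with the \emph{undecomposed} right $\mathcal{O}_\vre$ into a single slightly larger neighbourhood $\mathcal{O}_{c'\vre}$, obtaining $\mathcal{O}_\vre a\mathcal{O}_\vre\subset(U^-\cap\mathcal{O}_{c_0\vre})\,a\,\mathcal{O}_{c'\vre}$; then re-apply the local decomposition to $\mathcal{O}_{c'\vre}$ and push only its $U^-$-piece through $a$, finally merging the two $U^-$-factors. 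No $Z$-versus-$U^\pm$ commutation is ever needed. Your route is valid (that $Z$ normalises $U^\pm$ with locally bounded distortion is standard), but the paper's is a shade cleaner. One small imprecision in your write-up: you first assert $a(U^-\cap\mathcal{O}_\vre)a^{-1}\subset U^-\cap\mathcal{O}_\vre$ verbatim (``size non-increasing'') but then, correctly, work with a constant $c_1$; since the right-invariant Riemannian metric need not agree with the Lie-algebra norm under $\exp$, the honest statement is $a(U^-\cap\mathcal{O}_\vre)a^{-1}\subset U^-\cap\mathcal{O}_{c_1\vre}$ with $c_1$ uniform over $a\in A^{++}$, which is what both you (in the end) and the paper actually use.
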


\begin{proof}
Since the product map $U^-\times Z\times U^+\to G$ is a diffeomorphism
in a neighbourhood of identity, there exist $c_0,\vre_0>0$ such that for every
$\vre\in (0,\vre_0)$,
\begin{align*}
\mathcal{O}_\vre &\subset (U^-\cap \mathcal{O}_{c_0\vre})
 (Z\cap \mathcal{O}_{c_0\vre})
 (U^+\cap \mathcal{O}_{c_0\vre}).
\end{align*}
There exist $c_1,\vre_1>0$ such that for every $a\in A^{++}$ and $\vre\in (0,\vre_1)$,
\begin{align*}
a^{-1}(U^+\cap \mathcal{O}_\vre) a &\subset (U^+\cap \mathcal{O}_{c_1\vre}),\\
a(U^-\cap \mathcal{O}_\vre) a^{-1} &\subset (U^-\cap \mathcal{O}_{c_1\vre}).
\end{align*}
Hence, it follows that
\begin{align*}
\mathcal{O}_\vre a \mathcal{O}_\vre &\subset (U^-\cap \mathcal{O}_{c_0\vre})a
\cdot a^{-1}(Z\cap \mathcal{O}_{c_0\vre})(U^+\cap
\mathcal{O}_{c_0\vre})a\cdot\mathcal{O}_\vre\\
&\subset (U^-\cap \mathcal{O}_{c_0\vre})a
\cdot \mathcal{O}_{(c_1c_0+c_0+1)\vre},
\end{align*}
and
\begin{align*}
a\cdot  \mathcal{O}_\vre &\subset a(U^-\cap \mathcal{O}_{c_0\vre})(Z\cap
\mathcal{O}_{c_0\vre})a^{-1}\cdot a (U^+\cap \mathcal{O}_{c_0\vre})\\
&\subset (U^-\cap \mathcal{O}_{c_1c_0\vre})(Z\cap
\mathcal{O}_{c_0\vre}) a (U^+\cap \mathcal{O}_{c_0\vre}).
\end{align*}
Now the proposition follows from the last two estimates.
\end{proof}

\begin{proof}[Proof of Proposition \ref{th:sec_sym}]
Let $\vre,\delta,\ell>0$ be such that for every $\delta$-regular $a\in A$,
$$
\mathcal{O}_\vre a \mathcal{O}_\vre\subset (\mathcal{O}_{\ell\vre}\cap K)
(\mathcal{O}_{\ell\vre}\cap A)a (\mathcal{O}_{\ell\vre}\cap H).
$$
By Proposition \ref{th:wave}, for every small $\vre>0$, such $\delta$ and $\ell$ exist,
and we have 
\begin{equation}\label{eq:dl}
\delta=O(\vre^{1/3})\quad\hbox{and}\quad \ell=O(\vre^{-2/3}).
\end{equation}

Let $S_t^{\delta}(\Phi,\Psi)$ denote the subset of $\delta$-singular elements of $S_t(\Phi,\Psi)$.
We claim that
\begin{equation}\label{eq:11}
\vol(\mathcal{O}_\vre S^\delta_t(\Phi,\Psi)\mathcal{O}_\vre)\ll \vre^{1/3} \vol(S_t(\Phi,\Psi)).
\end{equation}
Decomposing $A^+$ into a union of the Weyl chambers $A^{++}$, it suffices to prove this estimate
for a subset of $S^\delta_t(\Phi,\Psi)$ with $A$-component contained in $A^{++}$.
Let $A^\delta_t$ be the subset of $S_t\cap A^{++}$ consisting of $\delta$-singular elements.
There exists $c=c(\Phi)>0$ such that
$$
S^\delta_t(\Phi,\Psi)\cap KA^{++}H\subset \Phi A^\delta_{t+c}\Psi.
$$
Let $k_1,\ldots, k_I\in \Phi$ be an $\vre$-net in $\Phi\subset  K/M$ such that $I=O(\vre^{-(\dim K-\dim M)})$
and $h_1,\ldots, h_J\in H$ an $\vre$-net in $M\Psi$ such that $J=O(\vre^{-\dim H})$.
Then for some $c_1>0$,
\begin{align*}
\mathcal{O}_\vre\cdot (\Phi A^\delta_{t+c}\Psi)\cdot \mathcal{O}_\vre
\subset \bigcup_{i,j} \mathcal{O}_{2\vre} k_i A^\delta_{t+c} h_j\mathcal{O}_{2\vre} 
\subset \bigcup_{i,j} k_i \mathcal{O}_{c_1\vre}
A^\delta_{t+c} \mathcal{O}_{c_1\vre}h_j.
\end{align*}
Hence,
\begin{equation}\label{eq:sing}
\vol(\mathcal{O}_\vre \cdot (\Phi A^\delta_{t+c}\Psi)\cdot \mathcal{O}_\vre)
\le IJ \vol(\mathcal{O}_{c_1\vre}
A^\delta_{t+c} \mathcal{O}_{c_1\vre}).
\end{equation}
By Proposition \ref{l:pm}, for some $c_2>0$,
$$
\mathcal{O}_{c_1\vre}
A^{\delta}_{t+c} \mathcal{O}_{c_1\vre}
\subset (U^-\cap \mathcal{O}_{c_2\vre})
 (Z\cap \mathcal{O}_{c_2\vre}) A^{\delta}_{t+c}
 (U^+\cap \mathcal{O}_{c_2\vre}).
$$
Since $Z$ is reductive and $A$ lies in the center of $Z$, there exist a local Lie
subgroup $Z'$ complementary to $A$ and for some $c_3>0$,
$$
(Z\cap \mathcal{O}_{c_2\vre})\subset (Z'\cap \mathcal{O}_{c_3\vre})(A\cap
\mathcal{O}_{c_3\vre}).
$$
Therefore, for some $c_4>0$,
$$
\mathcal{O}_{c_1\vre}
A^{\delta}_{t+c} \mathcal{O}_{c_1\vre}\subset
(U^-\cap \mathcal{O}_{c_2\vre})
 (Z'\cap \mathcal{O}_{c_3\vre}) A^{\delta+c_3\vre}_{t+c_4}
 (U^+\cap \mathcal{O}_{c_2\vre}).
$$
The Haar measure on $G$ with respect to $U^-Z'AU^+$-coordinates is given by
$$
\det(\Ad(a)|_{U^+})du^-dz'dadu^+.
$$
Therefore,
$$
\vol(\mathcal{O}_{c_1\vre}
A^{\delta}_{t+c} \mathcal{O}_{c_1\vre})
\ll \vre^{\dim U^-+\dim Z'+\dim U^+}\int_{A^{\delta+c_3\vre}_{t+c_4}} \det(\Ad(a)|_{U^+})\,da.
$$
The last integral was estimated in \cite[Proposition~3.22]{GOS2} (see also \cite[Proposition~3.8]{GOS2}).
We have
\begin{align*}
\int_{A^{\delta+c_3\vre}_{t+c_4}} \det(\Ad(a)|_{U^+})\,da \ll (\delta+c_3\vre)
\vol(S_t(\Phi,\Psi)).
\end{align*}
Hence, 
$$
\vol(\mathcal{O}_{c_1\vre}
A^{\delta}_{t+c} \mathcal{O}_{c_1\vre})\ll (\delta+\vre) \vre^{\dim G-\dim A} \vol(S_t(\Phi,\Psi)).
$$
Since
$$
IJ\ll \vre^{-(\dim K+\dim H-\dim M)}=\vre^{-(\dim G-\dim A)},
$$
we deduce from \eqref{eq:sing} that
\begin{equation*}
\vol(\mathcal{O}_\vre S^\delta_t(\Phi,\Psi)\mathcal{O}_\vre)\ll (\delta+\vre) \vol(S_t(\Phi,\Psi))
\ll \vre^{1/3} \vol(S_t(\Phi,\Psi)),
\end{equation*}
as claimed.

Let $\tilde S^\delta_t(\Phi,\Psi)$ denote the subset of $\delta$-regular elements in
$S_t(\Phi,\Psi)$. Since $\Phi$ and $\Psi$ are bounded,
it follows from Proposition \ref{th:wave} that there exists $c>0$ such that for every $kah\in \tilde S^\delta_t(\Phi,\Psi)$,
\begin{align*}
\mathcal{O}_\vre kah \mathcal{O}_\vre \subset 
k \mathcal{O}_{c\vre} a \mathcal{O}_{c\vre}h
 &\subset  k (K\cap \mathcal{O}_{\ell c\vre}) (A\cap \mathcal{O}_{\ell c\vre}) a (H\cap
 \mathcal{O}_{\ell c\vre})h \\
&\subset  (K\cap \mathcal{O}_{\ell c^2\vre})k (A\cap \mathcal{O}_{\ell  c\vre}) a h (H\cap
 \mathcal{O}_{\ell c^2\vre}).
\end{align*}
Using that for some $c_1>0$ we have $\mathcal{O}_\vre S_t\subset S_{t+c_1\vre}$, we deduce 
from the previous estimate that
$$
\mathcal{O}_\vre \tilde S^\delta_t(\Phi,\Psi)\mathcal{O}_\vre\subset 
\tilde S^{\delta-\ell c\vre}_{t+2c_1\ell c^2\vre}( (K\cap \mathcal{O}_{\ell c^2\vre})\Phi , \Psi (H\cap
\mathcal{O}_{\ell c^2\vre})).$$
Then by the uniqueness properties of the Cartan decomposition,
\begin{align}\label{eq:reg}
\mathcal{O}_\vre \tilde S^\delta_t(\Phi,\Psi)\mathcal{O}_\vre -
S_t(\Phi,\Psi)\subset & S_{t+2c_1\ell c^2\vre}( (K\cap
\mathcal{O}_{\ell c^2\vre})\Phi-\Phi ,\Psi (H\cap \mathcal{O}_{\ell c^2\vre}))\\
&\cup
(S_{t+2c_1\ell c^2\vre}(\Phi,\Psi)
- S_{t}(\Phi,\Psi)) \nonumber \\
&\cup
S_{t+2c_1\ell c^2\vre}(\Phi,\Psi (H\cap \mathcal{O}_{\ell c^2\vre}) - \Psi).\nonumber
\end{align}
With respect to the Cartan decomposition $G=KA^+H$, the Haar measure on $G$ is given by
$dk\,\xi(a)da\, dh$ where $dk,da,dh$ are Haar measures on the components, and $\xi$ is
an explicit continuous function. There exists $c_2>0$ such that
\begin{align*}
&S_{t+2c_1\ell c^2\vre}\left((K\cap
\mathcal{O}_{\ell c^2\vre})\Phi-\Phi ,\Psi (H\cap \mathcal{O}_{\ell c^2\vre})\right)\\
\subset & 
((K\cap \mathcal{O}_{\ell c^2\vre})\Phi -\Phi)\,
(S_{t+c_2}\cap A^+)\,\Psi (H\cap \mathcal{O}_{\ell c^2\vre}).
\end{align*}
Hence, it follows that
\begin{align*}
&\vol(S_{t+2c_1\ell c^2\vre}\left((K\cap \mathcal{O}_{\ell c^2\vre})\Phi -\Phi, \Psi(H\cap
\mathcal{O}_{\ell c^2\vre}))\right)\\
\ll & \vol\left((K\cap \mathcal{O}_{\ell c^2\vre})\Phi -\Phi\right)\int_{S_{t+c_2}\cap A^+} \xi(a)\,da\\
\ll & (\ell \vre) \vol(S_t(\Phi,\Psi))
\ll  \vre^{1/3} \vol(S_{t+c_3}(\Phi,\Psi))
\end{align*}
for some $c_3>0$, where we used \eqref{eq:dl}.
Similarly,
\begin{align*}
\vol(S_{t+2c_1\ell c^2\vre}(\Phi,\Psi(H\cap \mathcal{O}_{\ell c^2\vre})- \Psi))
\ll  \vre^{1/3} \vol(S_{t+c_3}(\Phi,\Psi)).
\end{align*}
We will use the Lipschitz property of the function
$\phi(t):=\int_{\log \|ka\|\le t} \xi(a)da$: for sufficiently large $t$ and  $\vre\in (0,1)$,
$$
\phi(t+\vre)-\phi(t)\ll \vre\, \phi(t)
$$
uniformly on $k$. This property can be proved using the argument from
\cite[Appendix]{EMS} --- see \cite[Proposition 7.3]{GN}.
We obtain
\begin{align*}
&\vol(S_{t+2c_1\ell c^2\vre}(\Phi,\Psi)
- S_{t}(\Phi,\Psi))\\
\ll & \int_{K/M} \left(\int_{t\le \log \|kav_0\|\le t+2c_1\ell c^2\vre} \xi(a)da\right) dk\\
\ll & \ell\vre \int_{K/M} \left(\int_{\log \|ka v_0\|\le t} \xi(a)da\right)dk\ll \vre^{1/3} \vol(S_t(\Phi,\Psi)).
\end{align*}
Now it follows from (\ref{eq:reg}) that
$$
\vol(\mathcal{O}_\vre \tilde S^\delta_t(\Phi,\Psi)\mathcal{O}_\vre -
S_t(\Phi,\Psi))\ll \vre^{1/3}\vol(S_t(\Phi,\Psi)).
$$
Combining this estimate with \eqref{eq:11}, we deduce that 
$$
\vol(\mathcal{O}_\vre S_t(\Phi,\Psi)\mathcal{O}_\vre -
S_t(\Phi,\Psi))\ll \vre^{1/3}\vol(S_t(\Phi,\Psi)).
$$
Similarly, one shows that
$$
\vol\left( S_t(\Phi,\Psi) -
\cap_{u,v\in \mathcal{O}_\vre} S_t(\Phi,\Psi)\right)\ll \vre^{1/3}\vol(S_t(\Phi,\Psi)).
$$
Hence, the sets $S_t(\Phi,\Psi)$ are H\"older well-rounded with exponent $1/3$.
\end{proof}
 
 \subsection{Completion of the proofs}
We now turn to complete the proofs of the results stated in \S 8.1. 
\begin{proof}[Proof of Theorem \ref{c:s22}]
By Proposition \ref{th:sec_sym}, the sets $S_t(\Phi,\Psi)$ are H\"older well-rounded
with exponent $1/3$, and by Theorem \ref{semisimple mean}, the uniform averages 
supported on  $S_t(\Phi,\Psi)$ satisfy the stable quantitative
mean ergodic theorem. Hence, the Theorem \ref{c:s22} is a consequence of Theorem \ref{error estimate}.
\end{proof}

\begin{proof}[Proof of Corollary \ref{c:s2}]
Let $d$ be a right-invariant Riemannian metric on $M\backslash H$ 
and $x_0\in M\backslash H$ with trivial $(\Gamma\cap H)$-stabiliser. Define
$$
\mathcal{D}_r=\{x\in M\backslash H:\, d(x,x_0)\le d(x,x_0\gamma)+r\quad\hbox{for $\gamma\in \Gamma\cap H$}\}.
$$
The set $\mathcal{D}_0$ is the Dirichlet domain for the right $(\Gamma\cap H)$-action on $H$.
Since $H/(\Gamma\cap H)$ is compact, $\mathcal{D}_0$ is compact,
and it can be defined by finitely many inequalities:
$$
\mathcal{D}_0=\{x\in M\backslash H:\, d(x,x_0)\le d(x,x_0\gamma_i)\quad\hbox{for $i=1,\ldots, k$}\}.
$$

Note that $\mathcal{D}_0$ satisfies
$$
\mathcal{D}_0\Gamma=M\backslash H\quad\hbox{and}\quad \hbox{int}(\mathcal{D}_0)\gamma_1\cap\hbox{int}(\mathcal{\mathcal{D}}_0)\gamma_2=\emptyset
\quad\hbox{for $\gamma_1\ne \gamma_2$}.
$$

We choose a measurable fundamental
domain $\mathcal{D}$ for the $(\Gamma\cap H)$-action on $M\backslash H$ such that $\hbox{int}(\mathcal{D}_0)\subset \mathcal{D}\subset \mathcal{D}_0$.
Note that the map $KA^+/M\to G/H$ is one-to-one on the set of regular elements.
Let $\Sigma$ be a measurable section of the map $KA^+/M\to G/H$ which contains all regular elements.

We set  
$$
T_t(\Phi,\mathcal{D})=S_t(\Phi,H)\cap \Sigma (\mathcal{D}).
$$

We claim that
\begin{equation}\label{eq:eq}
|\Gamma H\cap B_t(\Phi)|=|\Gamma \cap T_t(\Phi,\mathcal{D})|.
\end{equation}
Clearly, for $g\in T_t(\Phi,\mathcal{D})$, we have $gH\in B_t(\Phi)$, and 
every $x\in \Gamma H\cap B_t(\Phi)$ is of the form $\gamma H$ for some $\gamma\in \Gamma \cap T_t(\Phi,H)$. Moreover, since $H=\mathcal{D}(H\cap\Gamma)$, we can choose $\gamma\in
\Gamma \cap T_t(\Phi,\mathcal{D})$. Hence, it remains to show that
if $\gamma_1H=\gamma_2H$ for some $\gamma_1,\gamma_2\in \Gamma \cap T_t(\Phi,\mathcal{D})$,
then  $\gamma_1=\gamma_2$. We have $\gamma_i=\omega_ih_i\in \Sigma \mathcal{D}$.
It follows from the definition of $\Sigma$ that $\omega_1=\omega_2$.
Hence, $h_1=h_2(\gamma_2^{-1}\gamma_1)$, and because $h_1,h_2$ are both in the fundamental
domain $\mathcal{D}$, we conclude that $h_1=h_2$.
 
Next, we show that the sets  $T_t(\Phi,\mathcal{D})$ are H\"older well-rounded with exponent $1/3$.
By Proposition \ref{th:sec_sym} (see also Remark \ref{r:sec}), it remains to check that the set $\mathcal{D}$ 
is Lipschitz well-rounded, namely,
satisfies
$$
\vol\left( \mathcal{D}(\mathcal{O}_\vre\cap H) -\cap_{u\in \mathcal{O}_\vre\cap H} \mathcal{D} u\right)\ll \vre. 
$$
For $\gamma\ne e$, the function $f_\gamma(x)=d(x,x_0)-d(x,x_0\gamma)$ is regular on $\{f_\gamma=0\}$.
Hence, for a compact set $\Omega\subset H$,
\begin{equation}\label{eq:D}
\vol(\{x\in\Omega:\, -\vre< f_\gamma(x)<\vre\})\ll_{\Omega} \vre.
\end{equation}
Also, for $h\in \mathcal{O}_\vre\cap H$, 
$$
|f_\gamma(xh)-f_\gamma(x)|\ll \vre
$$
uniformly on $x$ in compact sets.
This implies that for some $c>0$,
\begin{align*}
\mathcal{D}(\mathcal{O}_\vre\cap H) -\cap_{u\in \mathcal{O}_\vre\cap H} \mathcal{D} u 
&\subset  \mathcal{D}_0\overline{(\mathcal{O}_{\vre_1}\cap H)} \cap ( \mathcal{D}_{c\vre}-\mathcal{D}_{-c\vre})\\
&\subset \bigcup_{i=1}^m 
\{h\in  \mathcal{D}_0\overline{(\mathcal{O}_{\vre_1}\cap H)} :\, -c\vre< f_{\gamma_i}(h)<c\vre\},
\end{align*}
Hence, it follows from (\ref{eq:D}) that the set $S$ is Lipschitz well-rounded.
Then by Proposition \ref{th:sec_sym}, the sets $T_t(\Phi,\mathcal{D})$
are H\"older well-rounded with exponent $1/3$. Now the corollary follows from (\ref{eq:eq}) 
and Theorem \ref{c:s22}. 
\end{proof}

\end{document}